\documentclass[11pt]{article}

\usepackage{natbib}
 %
 %
 %
 %
 %
 \bibpunct[, ]{[}{]}{,}{n}{}{,}%

\usepackage{amsmath}
\usepackage{amssymb}
\usepackage{amsfonts}
\usepackage{mathrsfs}
\usepackage{graphicx}
\usepackage{color,xcolor}
\usepackage{float}
\usepackage{subfig}
\usepackage[normalem]{ulem}
\usepackage[linktocpage,colorlinks,linkcolor=blue,anchorcolor=blue, citecolor=blue,urlcolor=blue]{hyperref}
\usepackage{pdfpages}
\usepackage{enumitem}
\usepackage{multirow}
\usepackage{makecell}
\usepackage{breakcites}
\usepackage{bbm}
\usepackage{wrapfig}

\topmargin      0.0truein
\headheight     0.0truein
\headsep        0.0truein
\textheight     9.0truein
\textwidth      6.5truein
\oddsidemargin  0.0truein
\evensidemargin 0.0truein

\newtheorem{theorem}{Theorem}[section]
\newtheorem{lemma}[theorem]{Lemma}
\newtheorem{proposition}[theorem]{Proposition}
\newtheorem{corollary}[theorem]{Corollary}

\newtheorem{algorithm}[theorem]{Algorithm}

\newcommand{\red}[1]{\textcolor{black}{#1}}
\newcommand{\redmath}[1]{\mathcolor{black}{#1}}

\newcommand{\li}[1]{\red{#1}}
\newcommand{\limath}[1]{\redmath{#1}}

\def\bd{\boldsymbol}
\def\pn{\par\smallskip\noindent}

\newcommand{\BD}{\mathbb{D}}
\newcommand{\BE}{\mathbb{E}}
\newcommand{\BH}{\mathbb{H}}
\newcommand{\BX}{\mathbb{X}}
\newcommand{\BY}{\mathbb{Y}}
\newcommand{\BZ}{\mathbb{Z}}
\newcommand{\BS}{\mathbb{S}}
\newcommand{\BT}{\mathbb{T}}
\newcommand{\BN}{\mathbb{N}}
\newcommand{\BI}{\mathbb{I}}
\newcommand{\R}{\mathbb{R}}
\newcommand{\BK}{\mathbb{K}}
\newcommand{\BQ}{\mathbb{Q}}

\newcommand{\A}{\mathcal{A}}

\newcommand{\X}{\mathcal{X}}
\newcommand{\Y}{\mathcal{Y}}
\newcommand{\Z}{\mathcal{Z}}
\newcommand{\TT}{\mathcal{T}}
\newcommand{\U}{\mathcal{U}}
\newcommand{\V}{\mathcal{V}}

\newcommand{\be}{\boldsymbol{e}}
\newcommand{\bx}{\boldsymbol{x}}
\newcommand{\by}{\boldsymbol{y}}
\newcommand{\bz}{\boldsymbol{z}}
\newcommand{\bu}{\boldsymbol{u}}
\newcommand{\bv}{\boldsymbol{v}}
\newcommand{\bw}{\boldsymbol{w}}

\newcommand{\bt}{\boldsymbol{t}}

\newcommand{\bp}{\boldsymbol{p}}

\newcommand{\T}{\textnormal{T}}
\newcommand{\st}{\textnormal{s.t.}}

\newcommand{\Mat}{M}

\newcommand{\Prob}{\textnormal{Prob}\,}

\newcommand{\sign}{\textnormal{sign}\,}

\newcommand{\Diag}{D}

\usepackage{booktabs}
\usepackage{graphicx}
\usepackage{stmaryrd}
\usepackage{mathtools}
\usepackage{multicol}

\usepackage{algorithm}
\usepackage{algorithmic}

\makeatletter
\newcommand{\pushright}[1]{\ifmeasuring@#1\else\omit\hfill$\displaystyle#1$\fi\ignorespaces}
\newcommand{\pushleft}[1]{\ifmeasuring@#1\else\omit$\displaystyle#1$\hfill\fi\ignorespaces}
\makeatother

\usepackage{tikz}
\usetikzlibrary{calc}
\usepackage{pgfplots}
\usepackage{xxcolor}
\pgfplotsset{compat=1.16}
\usepgfplotslibrary{fillbetween}
\pgfdeclareradialshading[tikz@ball]{ball}{\pgfqpoint{0bp}{0bp}}{%
 color(0bp)=(tikz@ball!0!white);
 color(7bp)=(tikz@ball!0!white);
 color(15bp)=(tikz@ball!70!black);
 color(20bp)=(black!70);
 color(30bp)=(black!70)}
\makeatother

\tikzset{viewport/.style 2 args={
    x={({cos(-#1)*1cm},{sin(-#1)*sin(#2)*1cm})},
    y={({-sin(-#1)*1cm},{cos(-#1)*sin(#2)*1cm})},
    z={(0,{cos(#2)*1cm})}
}}

\pgfplotsset{only foreground/.style={
    restrict expr to domain={rawx*\CameraX + rawy*\CameraY + rawz*\CameraZ}{-0.05:100},
}}
\pgfplotsset{only background/.style={
    restrict expr to domain={rawx*\CameraX + rawy*\CameraY + rawz*\CameraZ}{-100:0.05}
}}

\def\addFGBGplot[#1]#2;{
    \addplot3[#1,only background, opacity=0.25] #2;
    \addplot3[#1,only foreground] #2;
}

\usetikzlibrary{3d}
\usetikzlibrary{calc}
\usetikzlibrary{babel} 

\pgfmathsetmacro\xx{1/sqrt(2)}
\pgfmathsetmacro\xy{1/sqrt(6)}
\pgfmathsetmacro\zy{sqrt(2/3)}

\begin{document}

\title{$\ell_p$-sphere covering and approximating nuclear $p$-norm}

\author{
Jiewen GUAN
\thanks{Research Institute for Interdisciplinary Sciences, School of Information Management and Engineering, Shanghai University of Finance and Economics, Shanghai 200433, China. Email: seemjwguan@gmail.com}
    \and
Simai HE
\thanks{Research Institute for Interdisciplinary Sciences, School of Information Management and Engineering, Shanghai University of Finance and Economics, Shanghai 200433, China. Email: simaihe@mail.shufe.edu.cn}
    \and
Bo JIANG
\thanks{Research Institute for Interdisciplinary Sciences, School of Information Management and Engineering, Shanghai University of Finance and Economics, Shanghai 200433, China. Email: isyebojiang@gmail.com}
    \and
Zhening LI
\thanks{School of Mathematics and Physics, University of Portsmouth, Portsmouth PO1 3HF, United Kingdom. Email: zheningli@gmail.com}
}

\date{\today}

\maketitle

\begin{abstract}
The spectral $p$-norm and nuclear $p$-norm of matrices and tensors appear in various applications albeit both are NP-hard to compute. The former sets a foundation of $\ell_p$-sphere constrained polynomial optimization problems and the latter has been found in many rank minimization problems in machine learning. We study approximation algorithms of the tensor nuclear $p$-norm with an aim to establish the approximation bound matching the best one of its dual norm, the tensor spectral $p$-norm. Driven by the application of sphere covering to approximate both tensor spectral and nuclear norms ($p=2$), we propose several types of hitting sets that approximately represent $\ell_p$-sphere with adjustable parameters for different levels of approximations and cardinalities, providing an independent toolbox for decision making on $\ell_p$-spheres. Using the idea in robust optimization and second-order cone programming, we obtain the first polynomial-time algorithm with an $\Omega(1)$-approximation bound for the computation of the matrix nuclear $p$-norm when $p\in(2,\infty)$ is a rational, paving a way for applications in modeling with the matrix nuclear $p$-norm. These two new results enable us to propose various polynomial-time approximation algorithms for the computation of the tensor nuclear $p$-norm using tensor partitions, convex optimization and duality theory, attaining the same approximation bound to the best one of the tensor spectral $p$-norm. \li{Effective performance of the proposed algorithms for the tensor nuclear $p$-norm are justified by numerical implementations.} We believe the ideas of $\ell_p$-sphere covering with its applications in approximating nuclear $p$-norm would be useful to tackle optimization problems on other sets such as the binary hypercube with its applications in graph theory and neural networks, the nonnegative sphere with its applications in copositive programming and nonnegative matrix factorization.

\vspace{0.25cm}

\noindent {\bf Keywords:} optimization on $\ell_p$-sphere, nuclear $p$-norm, semidefinite programming, $\ell_p$-sphere covering, approximation algorithm, spectral $p$-norm, polynomial optimization

\vspace{0.25cm}

\noindent {\bf Mathematics Subject Classification:} 15A60, 
90C59, 
52C17, 
68Q17 

\end{abstract}


\section{Introduction}\label{sec:introduction}

Decision making on spheres has been one of the important topics in mathematical optimization. It includes an extensive list of both theoretical and practical applications in areas of study such as biomedical engineering~\cite{BJVS07}, computational anatomy~\cite{FVJ09}, numerical multilinear algebra~\cite{Q05}, quantum mechanics~\cite{DLMO07}, solid mechanics~\cite{HDQ09}, signal processing~\cite{WV10}, tensor decompositions~\cite{KB09}, to name a few. Taking one of its simple models, $\min\{f(\bx):\|\bx\|_2=1,\,\bx\in\R^n\}$ where $f(\bx)$ is a multivariate polynomial function, this includes particular cases such as Cauchy-Schwarz inequality for a linear objective, extreme matrix eigenvalues for a quadratic form, trust region subproblem for a generic quadratic function~\cite{Y00}, as well as deciding the nonnegativity of a homogeneous polynomial~\cite{R00}. However, even in this simple form the problem can be very difficult since minimizing a cubic form on the unit sphere is already NP-hard~\cite{N03}. This fact has resulted a lot of research works~\cite{he2010approximation,so2011deterministic,zhangqy12,libook12,zhou12, he2014probability} on polynomial-time approximation algorithms for sphere constrained polynomial optimization in the recent decade. All these works essentially rely on a fundamental problem which is to maximize a multilinear form on Cartesian products of unit spheres. Taking a trilinear form
$\TT(\bx,\by,\bz) = \sum_{i=1}^{n}\sum_{j=1}^{n}\sum_{k=1}^{n} t_{ijk} x_iy_jz_k$ where $\TT=(t_{ijk})\in\R^{n\times n\times n}$ is a tensor of order three as a typical example, the optimization model is
\begin{equation}\label{eq:3tnorm}
\|\TT\|_{\sigma}= \max \left\{\TT(\bx,\by,\bz): \|\bx\|_2=\|\by\|_2=\|\bz\|_2=1,\,\bx,\by,\bz\in\R^n\right\},
\end{equation}
and the optimal value $\|\TT\|_{\sigma}$ is known as the spectral norm of $\TT$. \li{The spectral norm has found various applications, in particular, in analyzing the complexity of higher-order optimization algorithms; see, e.g.,~\cite{jiang2021optimal,lin2023monotone}.} The model~\eqref{eq:3tnorm} is NP-hard~\cite{he2010approximation}. The best-known approximation bound obtained by a polynomial-time algorithm is $\Omega(\sqrt{\ln n/n})$~\cite{so2011deterministic,he2014probability,he2023approx}.

The dual norm problem to~\eqref{eq:3tnorm} is
\begin{equation}\label{eq:3nnorm}
\|\TT\|_{*}= \max \left\{\left\langle\TT,\X\right\rangle: \|\X\|_\sigma\le 1,\,\X\in\R^{n\times n\times n}\right\},
\end{equation}
and the optimal value $\|\TT\|_*$ is known as the nuclear norm of $\TT$. The nuclear norm is much more widely used in practical applications as the convex surrogate of the rank function, no matter for matrices~\cite{RFP10} or tensors~\cite{yuan2016tensor}. The model~\eqref{eq:3nnorm} is also NP-hard~\cite{friedland2018nuclear} \li{and no computational methods have been proposed except~\cite{nie2017symmetric} for symmetric tensors}. It seems irrelevant to decision making on spheres but it is actually highly related. By carefully choosing vectors on the unit sphere and using \li{semidefinite programming (SDP)} with duality theory~\cite{hu2022complexity}, He et al.~\cite{he2023approx} obtained the best-known approximation bound $\Omega(\sqrt{\ln n/n})$ for~\eqref{eq:3nnorm}. The bound improved the previous best one $\Omega(\sqrt{1/n})$ and matched the best one $\Omega(\sqrt{\ln n/n})$ for~\eqref{eq:3tnorm}, hence bridging the gap between the primal and dual. The key idea of the algorithm in~\cite{he2023approx} is to explicitly construct a number of spherical caps to cover the unit sphere. It provides a useful tool and opens a new door to deal with decision making on spheres.

The ideas of constructions for sphere covering and the applications to~\eqref{eq:3tnorm} and~\eqref{eq:3nnorm} in~\cite{he2023approx} motivate us to study decision making on other important sets. An immediate one that generalizes the unit sphere is the $\ell_p$-sphere defined by $\|\bx\|_p=1$ for $p\in[1,\infty]$. It includes an important case of the unit box constraint when $p=\infty$. The example models~\eqref{eq:3tnorm} and~\eqref{eq:3nnorm} are then generalized to the spectral $p$-norm problem
\begin{equation}\label{eq:3tpnorm}
\|\TT\|_{p_\sigma}= \max \left\{\TT(\bx,\by,\bz): \|\bx\|_p=\|\by\|_p=\|\bz\|_p=1,\,\bx,\by,\bz\in\R^n\right\}
\end{equation}
and the nuclear $p$-norm problem
\begin{equation}\label{eq:3npnorm}
\|\TT\|_{p_*}=\max \left\{\left\langle\TT,\X\right\rangle: \|\X\|_{p_\sigma}\le 1,\,\X\in\R^{n\times n\times n}\right\},
\end{equation}
respectively. 
They are often found in many practical applications and in modeling various tensor optimization \li{problems}. In fact, many labeling problems in pattern recognition and image processing can be modeled by maximizing a certain polynomial function on the $\ell_p$-sphere~\cite{BBP98}. \li{Besides}, local optimal solutions of~\eqref{eq:3tpnorm} correspond to $\ell_p$-singular vectors of a tensor~\cite{lim2005singular} that have been extensively studied in the spectral theory of tensors and play an important role in signal processing, automatic control, data analysis, \li{as well as hypergraph theory~\cite{nikiforov2014analytic,keevash2014spectral}. Moreover, these norms naturally connect to the $\ell_p$-Grothendieck problem~\cite{kindler2010ugc}.} The nuclear $p$-norm~\eqref{eq:3npnorm} has found successful applications in temporal knowledge base completion~\cite{lacroix2018canonical, LacroixOU20}. In particular, under the nontemporal setting~\cite{lacroix2018canonical}, the nuclear $3$-norm is adopted as a regularizer for the binary tensor completion problem to simultaneously reduce the intrinsic complexity of the learned model and induce coefficient separability. This leads to great performance improvement and optimization convenience under stochastic gradient descent. Similarly, under the temporal setting~\cite{LacroixOU20}, the nuclear $3$-norm and $4$-norm are respectively adopted as two different regularizers for binary tensor completion and they both experimentally exhibit salient performance promotion over other comparative regularizers. \li{Apart from the important applications of the two norms across many disciplines when $p=2$ and $p=\infty$ (see, e.g.,~\cite[Section~1]{so2011deterministic},~\cite[Section~1]{he2023approx},~\cite[Section~1]{he2013approximation} and the references therein), it is also natural and necessary to study and understand their various behaviors during the smooth interpolation; i.e., $p\in(2,\infty)$.}

In fact, Hou and So~\cite{hou2014hardness} studied various $\ell_p$-sphere constrained polynomial optimization problems including~\eqref{eq:3tpnorm} as a typical example. In particular, they showed that~\eqref{eq:3tpnorm} is NP-hard when $p\in(2,\infty)$ and proposed a deterministic polynomial-time algorithm with an approximation bound $\Omega(\sqrt[p]{\ln n}/\sqrt{n})$, which can be improved to $\Omega(\sqrt{\ln n/n})$ with the help of randomization. These bounds remain currently the best when $p\in(2,\infty)$. The dual problem~\eqref{eq:3npnorm} is also NP-hard, evidenced by the complexity of duality~\cite{friedland2016computational}. However, to the best of our knowledge, the only known polynomial-time approximation bound of~\eqref{eq:3npnorm} in the literature is $\Omega(1/\sqrt[q]{n^2})$~\cite[Proposition 4.3]{chen2020tensor} where $\frac{1}{p}+\frac{1}{q}=1$. It was obtained by a simple partition of vectors and is much worse than that of~\eqref{eq:3tpnorm}. An improved approximation bound $\Omega(1/\sqrt[q]{n})$ of~\eqref{eq:3npnorm} can be obtained by \li{partitioning to matrices} or matrix unfolding in~\cite{chen2020tensor} but the methods rely on efficient computation of the matrix nuclear $p$-norm who itself is already NP-hard~\cite[Section~7]{friedland2018nuclear}. In fact, the bound $\Omega(1/\sqrt[q]{n})$ is made possible when $p\in(2,\infty)$ with the help of an $\Omega(1)$-approximation bound of the matrix nuclear $p$-norm developed in Section~\ref{sec:matrix-p-norm}. Well, this is still worse than that of~\eqref{eq:3tpnorm} when $p\in(2,\infty)$, no matter the deterministic one $\Omega (\sqrt[p]{\ln n}/\sqrt{n})$ or the randomized one $\Omega(\sqrt{\ln n/n})$. This naturally motivates the following question: Can the idea of sphere covering be applied to develop polynomial-time algorithms for~\eqref{eq:3npnorm} with approximation bounds matching currently the best \li{ones} for its dual problem~\eqref{eq:3tpnorm}?

In this paper, we answer the above question affirmatively by proposing easily implementable polynomial-time approximation algorithms for both the tensor spectral $p$-norm (the general version of~\eqref{eq:3tpnorm}) and the tensor nuclear $p$-norm (the general version of~\eqref{eq:3npnorm}) with the same best approximation bounds, in a short word, the $\ell_p$-sphere generalization of the $\ell_2$-sphere in~\cite{he2023approx}. These extensions are highly nontrivial and many challenges need to be tackled in order to achieve the goal. \li{As a reward, our study also leads to a wide array of technical tools and products that would be beneficial to the community. In particular, the explicit constructions of $\ell_p$-sphere covering already have some applications but will attract new applications to decision making problems over $\ell_p$-spheres.} In the following, we elaborate the difficulties conquered in this paper.

The first difficulty lies in the NP-hardness to compute the matrix nuclear $p$-norm that sets a foundation to the approximation of the tensor nuclear $p$-norm, in contrast to the matrix nuclear norm ($p=2$) that can be computed easily as the sum of its singular values. The research of approximating the matrix nuclear $p$-norm was almost blank although there are quite a few works on approximating the matrix spectral $p$-norm~\cite{N00,BN01,steinberg2005computation,hou2014hardness} \li{using conic optimization in the optimization community} 
albeit it is also NP-hard~\cite[Section~2.3]{steinberg2005computation}. To tackle this difficulty, we first apply a convex optimization model that approximates the matrix spectral $p$-norm within a constant factor~\cite[Proposition~3]{hou2014hardness} to the model~\eqref{eq:3npnorm} but it results the optimal value of the optimization model appearing in the constraints. We then apply the techniques developed in robust optimization~\cite{ben2009book,goldfarb2003robust} to compute the dual formulation of the convex optimization model that makes~\eqref{eq:3npnorm} explicit via a strong duality. In a final step, we reformulate some seemingly hard constrains to second-order cone (SOC) constraints. These enable us to (approximately) transfer~\eqref{eq:3npnorm} to a linear conic optimization model with semidefinite constraints and SOC constraints. As a result, the optimal value of the conic optimization model approximates the matrix nuclear $p$-norm within a constant factor when $p\in(2,\infty)$.

The second difficulty is that $\ell_p$-sphere covering cannot be easily extended from $\ell_2$-sphere covering as the former is not self-dual. Specifically, the goal is to find a hitting set $\BH^n$ consisting of a polynomial number of vectors on $\ell_p$-sphere in $\R^n$ such that $\max\{\bz^{\T}\bx:\bz\in\BH^n\}\ge\tau$ holds for any $\|\bx\|_q=1$. The $\tau$ is called the covering ratio that is aimed to be the largest possible. Therefore, covering $\ell_q$-sphere needs to analyze vectors on $\ell_p$-sphere. \li{The standard generalization from the $\ell_2$-sphere covering that enjoys the best covering ratio $\Omega(\sqrt{\ln n/n})$~\cite[Theorem 2.14]{he2023approx} can only achieve a covering ratio $\Omega(\sqrt[q]{\ln{n}/n})$ for $\ell_p$-sphere. Although this covering ratio is optimal for $p\in(1,2]$, it is much worse for $p\in(2,\infty)$, a required domain to apply the results to approximate the tensor spectral $p$-norm.
To further improve the covering ratio for $p\in(2,\infty)$, we explore an insightful idea from} computational geometry~\cite{brieden1998approximation} that an extended Hadamard transform (a linear transformation obtained by replacing $1$ with identity matrix in a Hadamard matrix)~\cite{syl1867,SY92} can enlarge $\ell_2$-norm while keeping $\ell_p$-norm unchanged for some vectors of our interest. This refinement sets a foundation to improve the covering ratio to $\Omega(\sqrt[p]{\ln{n}}/\sqrt{n})$ which matches the best deterministic approximation bound of~\eqref{eq:3tpnorm}. 
\li{Some interesting auxiliary results are generated and can be of independent interest.}
Finally, we construct a randomized hitting set of $\ell_p$-sphere whose covering ratio further improves to $\Omega(\sqrt{\ln{n}/n})$ which matches the best randomized approximation bound of~\eqref{eq:3tpnorm}. 
\li{Unlike the $\ell_2$-sphere where the tightness of Cauchy-Schwarz inequality holds for a pair of identical vectors, the tightness of H\"{o}lder's inequality holds only for a pair of vectors that are in different directions and can even be orthogonal, making the direct generalization from the result of $\ell_2$-sphere impossible. New techniques to refining the $\ell_p$-sphere need to be developed to fine tune the construction.}
As a result, we propose various constructions of hitting sets of $\ell_p$-sphere covering that are of independent interest and have direct applications to $\ell_p$-sphere decision making such as polynomial optimization on $\ell_p$-spheres.

With the aforementioned new results, we are able to put all the pieces together to derive various polynomial-time algorithms to approximate the tensor nuclear $p$-norm when $p\in(2,\infty)$. In particular, we first adopt the tensor partition technique~\cite{chen2020tensor} that helps to obtain an $\Omega(1/\sqrt[q]{n})$-approximation bound of~\eqref{eq:3npnorm}, an improvement from the only known one $\Omega(1/\sqrt[q]{n^2})$ in the literature. We then apply the hitting set relaxation approach that was originally proposed in~\cite{hu2022complexity} to improve the approximation bound of~\eqref{eq:3npnorm} to $\Omega(\sqrt[q]{\ln{n}/n})$ and $\Omega (\sqrt[p]{\ln n}/\sqrt{n})$ based on two different hitting sets of $\ell_p$-sphere. The latter bound matches the best approximation bound of~\eqref{eq:3tpnorm} by a deterministic algorithm. Finally, with the help of a randomized hitting set of $\ell_p$-sphere and a careful treatment of probability argument, we obtain the best approximation bound $\Omega(\sqrt{\ln n/n})$ of~\eqref{eq:3npnorm} that matches the best randomized one \li{of}~\eqref{eq:3tpnorm}, hence bridging the gap between the primal and dual perfectly.

We summarize our main contributions in the paper below.
\begin{itemize}
\item We obtain the first polynomial-time algorithm with $\Omega(1)$-approximation bound for the computation of the matrix nuclear $p$-norm when $p\in(2,\infty)$. This paves a way for applications in modeling with the matrix nuclear $p$-norm.
\item We propose several types of hitting sets of $\ell_p$-sphere with varying cardinalities and covering ratios. This provides an independent tool for decision making on $\ell_p$-spheres. 
\item We propose various polynomial-time algorithms for the computation of the tensor nuclear $p$-norm when $p\in(2,\infty)$ with the best approximation bound matching the best one for the tensor spectral $p$-norm.
\end{itemize}

The rest of the paper is organized as follows. We first introduce many uniform notations, formal definitions and their optimization models, as well as some basic properties \li{and connections} in Section~\ref{sec:notation}. We then study the computation of the matrix nuclear $p$-norm in Section~\ref{sec:matrix-p-norm} by proposing a convex optimization model whose optimal value approximates the matrix nuclear $p$-norm within a constant factor. The construction of various hitting sets of $\ell_p$-sphere covering is discussed in Section~\ref{sec:hitting-sets}. In Section~\ref{sec:algorithms}, we study the design and analysis of several deterministic and randomized polynomial-time algorithms to approximate the tensor nuclear $p$-norm \li{followed by some numerical results to justify the performance of the proposed methods.} Finally, we conclude this paper and propose some future research in Section~\ref{sec:conclusion}.

\section{Notations and preliminaries}\label{sec:notation}

\subsection{Uniform notations}

Throughout this paper we uniformly adopt lowercase letters (e.g., $x$), boldface lowercase letters (e.g., $\bx=(x_i)$), capital letters (e.g., $X=(x_{ij})$), and calligraphic letters (e.g., $\X=(x_{i_1i_2\dots i_d})$) to denote scalars, vectors, matrices, and higher-order (order three or more) tensors, respectively. Lowercase Greek letters are usually denoted for constants and $\delta$'s, specifically, for universal constants. Denote $\R^{n_1\times n_2\times\dots\times n_d}$ to be the space of real tensors of order $d$ with dimension $n_1\times n_2\times\dots\times n_d$. The same notation applies to a vector space and a matrix space when $d=1$ and $d=2$, respectively. Denote $\BN$ to be the set of positive integers and $\BQ$ to be the set of rationals. In particular, all blackboard bold capital letters denote sets, such as $\R^n$, the standard basis $\BE^n:=\{\be_1,\be_2,\dots,\be_n\}$ of $\R^n$, the $\ell_p$-sphere $\BS^n_p:=\{\bx\in\R^n:\|\bx\|_p=1\}$. The superscript $n$ of a set, specifically, indicates that the concerned set is a subset of $\R^n$. 

For a vector $\bx\in\R^n$, denote $\|\bx\|_p:=(\sum_{i=1}^n |x_i|^p)^{1/p}$ to be the $\ell_p$-norm for $p\in[1,\infty]$ and $|\bx|\in\R^n$ to be the vector taking its absolute values element-wisely. We denote $D(\bx)\in\R^{n\times n}$ to be the matrix whose diagonal vector is $\bx$ and off-diagonal entries are zeros. For a square matrix $X\in\R^{n\times n}$, we denote $D(X)\in\R^{n\times n}$ to be the matrix by keeping only the diagonal vector of $X$, i.e., replacing off-diagonal entries with zeros. The $n$-dimensional all-zero vector is denoted by $\bd{0}_n$ and all-one vector by $\bd{1}_n$. The $n\times n$ identity matrix is denoted by $I_n$ and a zero matrix is denoted by $O$. The subscript of these special vectors and matrices is often omitted as long as there is no ambiguity. For symmetric matrices $A,B\in\R^{n\times n}$, $A\succeq O$ indicates that $A$ is positive semidefinite and $A\succeq B$ indicates that $A-B\succeq O$. The Frobenius inner product between two tensors $\U,\V\in\R^{n_1\times n_2\times\dots\times n_d}$ is defined as
\[
\langle\U, \V\rangle := \sum_{i_1=1}^{n_1}\sum_{i_2=1}^{n_2} \dots\sum_{i_d=1}^{n_d} u_{i_1i_2\dots i_d} v_{i_1i_2\dots i_d}.
\]
Its induced Frobenius norm is naturally defined as $\|\TT\|:=\sqrt{\langle\TT,\TT\rangle}$. The two terms automatically apply to tensors of order two (matrices) and tensors of order one (vectors) as well. This is the conventional norm (a norm without a subscript) used throughout the paper.

Three vector operations are used frequently, namely the outer product $\otimes$, the Kronecker product $\boxtimes$, and appending vectors $\vee$. In particular, if $\bx\in\R^{n_1}$ and $\by\in\R^{n_2}$, then
\begin{align*}
\bx\otimes\by&=\bx\by^{\T}\in\R^{n_1\times n_2} \\
\bx\boxtimes\by&=(x_1\by^{\T},x_2\by^{\T},\dots,x_{n_1}\by^{\T})^{\T}\in\R^{n_1n_2} \\
\bx\vee\by&=(x_1,x_2,\dots,x_{n_1},y_1,y_2,\dots,y_{n_2})^{\T}\in\R^{n_1+n_2}.
\end{align*}
These three operators also apply to vector sets via element-wise operations. \li{We also denote $\bx^{\otimes d}:=\underbrace{\bx\otimes \bx\otimes\dots\otimes\bx}_d$.}

The notion $\Omega(f(n))$ means the same order of magnitude to $f(n)$, i.e., there exist positive constants $\alpha,\beta$ and $n_0$ such that $\alpha f(n)\le\Omega(f(n))\le\beta f(n)$ for all $n\ge n_0$. As a convention, the notation $O(f(n))$ means at most the same order of magnitude to $f(n)$.

\subsection{Spectral $p$-norm and nuclear $p$-norm}\label{sec:norm}

Given any tensor $\TT\in\R^{n_1 \times n_2 \times \dots \times n_d}$ and constant $p\in[1,\infty]$, the spectral $p$-norm~\cite{lim2005singular} of $\TT$ is defined as
\begin{equation}\label{def:spectral}
    \|\TT\|_{p_\sigma}:=\max \left\{\left\langle\TT, \bx_1 \otimes \bx_2 \otimes \dots \otimes \bx_d\right\rangle:\|\bx_k\|_p=1,\, k=1,2,\dots, d\right\}.
\end{equation}
This includes the matrix spectral $p$-norm for $d=2$ as its special case. In particular, the spectral $p$-norm reduces to the spectral norm when $p=2$.

A tensor $\TT\in\R^{n_1\times n_2\times\dots\times n_d}$ has $d$ modes, namely $1,2,\dots,d$. Fixing every mode index to a fixed value except the mode-$k$ index will result a vector in $\R^{n_k}$, called a mode-$k$ fiber. Fixing every mode index except two will result a matrix. In particular, fixing only one, say the mode-$k$ index to $i$ where $1\le i\le n_k$ will result a tensor of order $d-1$ in $\R^{n_1\times\dots \times n_{k-1}\times n_{k+1}\times\dots\times n_d}$. We call it the $i$th mode-$k$ slice, denoted by $\TT^{(k)}_i$. The mode-$k$ product of $\TT$ with a vector $\bx\in\R^{n_k}$ is denoted by
$$
\TT\times_k\bx:=\sum_{i=1}^{n_k}x_i\TT^{(k)}_i\in \R^{n_1\times\dots \times n_{k-1}\times n_{k+1}\times\dots\times n_d}.
$$
As a consequence, mode products with more vectors are obtained one by one, e.g.,
$$
\TT \times_1 \bx \times_2 \by =  (\TT \times_2 \by) \times_1 \bx = (\TT \times_1 \bx) \times_1 \by,
$$
where $\times_1 \by$ in the last equality is used instead of $\times_2 \by$ as mode $2$ of $\TT$ becomes mode $1$ of $\TT \times_1 \bx$. Mode products with $d-2$ vectors result a matrix, and with one more product result a vector. In \li{particular}, one has
$$\label{eq:multilinear}
\langle \TT, \bx_1 \otimes\bx_2\otimes \dots \otimes \bx_d \rangle
= \langle \TT \times_1 \bx_1, \bx_2 \otimes \dots \otimes \bx_d \rangle=
\dots 
=    \TT \times_1 \bx_1 \times_2 \bx_2 \dots \times_d \bx_d
$$
which is a multilinear form of $(\bx_1,\bx_2,\dots,\bx_d)$. By multilinearity, it means a linear form of $\bx_j$ by fixing all $\bx_k$'s but $\bx_j$ for every $j=1,2,\dots,d$. Therefore, the tensor spectral $p$-norm is to maximize a multilinear form over the Cartesian product of $\ell_p$-spheres. It was shown that the tensor spectral $p$-norm is equal to the largest $\ell_p$-singular value of the tensor~\cite[Proposition~1]{lim2005singular}. Mode product of a tensor with a vector on $\ell_p$-sphere will decrease the spectral $p$-norm in the weak sense.
\begin{lemma}\label{thm:contraction}
If $\TT\in\R^{n_1\times n_2\times\dots\times n_d}$, $p\in[1,\infty]$ and $\|\bx\|_p=1$, then $\|\TT\times_k \bx\|_{p_\sigma}\le\|\TT\|_{p_\sigma}$ for any $k$.
\end{lemma}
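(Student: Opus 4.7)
The plan is to unfold the definition of the spectral $p$-norm of $\TT\times_k\bx$ and then insert $\bx$ back into the multilinear form to exhibit it as a feasible point for the maximization defining $\|\TT\|_{p_\sigma}$. Fix $k$, and let $m_j=n_j$ for $j\ne k$. By definition,
\[
\|\TT\times_k\bx\|_{p_\sigma}=\max\bigl\{\langle\TT\times_k\bx,\,\bx_1\otimes\dots\otimes\bx_{k-1}\otimes\bx_{k+1}\otimes\dots\otimes\bx_d\rangle:\|\bx_j\|_p=1,\,j\ne k\bigr\},
\]
and, because the maximum is attained on a nonempty compact set, we may pick an optimal tuple $(\bx_j^\ast)_{j\ne k}$ with $\|\bx_j^\ast\|_p=1$.

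Next I would apply the mode-product/multilinear identity recalled just before the lemma, namely
\[
\langle\TT\times_k\bx,\,\bx_1^\ast\otimes\dots\otimes\bx_{k-1}^\ast\otimes\bx_{k+1}^\ast\otimes\dots\otimes\bx_d^\ast\rangle
=\langle\TT,\,\bx_1^\ast\otimes\dots\otimes\bx_{k-1}^\ast\otimes\bx\otimes\bx_{k+1}^\ast\otimes\dots\otimes\bx_d^\ast\rangle,
\]
which is just the associativity/commutativity of successive mode contractions applied with $\bx$ placed back in the $k$th slot. This step is the only thing to verify carefully, and it follows directly from the identity displayed immediately above the lemma statement (the chain expressing $\langle\TT,\bx_1\otimes\dots\otimes\bx_d\rangle$ as iterated mode products), but I would not grind through indices since the identity is already established.

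Finally, the right-hand side is the value of the multilinear form defining $\|\TT\|_{p_\sigma}$ evaluated at the tuple $(\bx_1^\ast,\dots,\bx_{k-1}^\ast,\bx,\bx_{k+1}^\ast,\dots,\bx_d^\ast)$, every component of which has unit $\ell_p$-norm by hypothesis and by the choice of $(\bx_j^\ast)$. Hence this tuple is feasible for the maximization defining $\|\TT\|_{p_\sigma}$, so the value is at most $\|\TT\|_{p_\sigma}$, giving $\|\TT\times_k\bx\|_{p_\sigma}\le\|\TT\|_{p_\sigma}$. There is no real obstacle: the statement is essentially a reformulation of the fact that the inner maximization over $d-1$ unit $\ell_p$-vectors is a restriction of the outer maximization over $d$ such vectors once $\bx$ is inserted in the $k$th mode. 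The proof works uniformly for every $p\in[1,\infty]$ and requires no convexity, no duality, and no dimension-dependent constants.
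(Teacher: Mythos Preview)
Your argument is correct and is precisely the approach the paper indicates: insert $\bx$ back in the $k$th slot via the identity $\langle\TT\times_k\bx,\bx_1\otimes\dots\otimes\bx_{k-1}\otimes\bx_{k+1}\otimes\dots\otimes\bx_d\rangle=\langle\TT,\bx_1\otimes\dots\otimes\bx_{k-1}\otimes\bx\otimes\bx_{k+1}\otimes\dots\otimes\bx_d\rangle$ and compare feasibility against optimality in~\eqref{def:spectral}.
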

The proof can be easily obtained by comparing feasibility with optimality to the optimization model~\eqref{def:spectral} and noticing that
$\langle \TT, \bx_1 \otimes\bx_2\otimes \dots \otimes \bx_d \rangle
= \langle \TT \times_k \bx_k, \bx_1 \otimes \dots\otimes \bx_{k-1}\otimes \bx_{k+1} \otimes\dots\otimes \bx_d \rangle.$


The nuclear $p$-norm~\cite{friedland2018nuclear} of $\TT\in\R^{n_1\times n_2\times\dots\times n_d}$ is defined as
\begin{equation}\label{def:nuclear}
    \|\TT\|_{p_*}:=\min \left\{\sum_{i=1}^r\left|\lambda_i\right|: \TT=\sum_{i=1}^r \lambda_i\, \bx_i^1 \otimes \bx_i^2 \otimes \dots \otimes \bx_i^d,\, \|\bx_i^k\|_p=1 \text{ for all } k \text{ and } i,\,  r\in\BN \right\}.
\end{equation}
This also includes the matrix nuclear $p$-norm for $d=2$ and the nuclear norm for $p=2$ as its special cases. Similar to the well-known duality between the spectral norm and nuclear norm, the nuclear $p$-norm does be the dual norm of the spectral $p$-norm, and vise versa.
\begin{lemma}[{\cite[Lemma 2.5]{chen2020tensor}}]\label{lma:norm-duality}
    For any $\TT\in\R^{n_1\times n_2\times\dots\times n_d}$ and $p\in[1,\infty]$, it holds that 
    $$
    \|\TT\|_{p_\sigma} =\max\{\langle\TT, \mathcal{Z}\rangle: \|\Z\|_{p_*} \le 1\}   
    \text{ and }
    \|\TT\|_{p_*} =\max\{\langle\TT, \Z\rangle:\|\Z\|_{p_\sigma} \le 1\}.
    $$
\end{lemma}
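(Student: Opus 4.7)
The plan is to recognise that the spectral and nuclear $p$-norms are respectively the support function and the gauge (Minkowski functional) of a common symmetric convex body, and then invoke the standard polar duality between these two objects. Both identities then drop out in parallel, with one ``easy'' direction coming from the explicit decomposition and one ``hard'' direction requiring convex separation.

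For the first identity, the ``$\ge$'' direction is immediate: take $(\bx_1^*,\ldots,\bx_d^*)$ attaining the maximum in~\eqref{def:spectral} and set $\Z=\bx_1^*\otimes\cdots\otimes\bx_d^*$; then $\|\Z\|_{p_*}\le 1$ via the length-one decomposition (namely $r=1$, $\lambda_1=1$) in~\eqref{def:nuclear}, and $\langle\TT,\Z\rangle=\|\TT\|_{p_\sigma}$. For ``$\le$,'' fix $\Z$ with $\|\Z\|_{p_*}\le 1$ and, for arbitrary $\varepsilon>0$, pick a decomposition $\Z=\sum_i\lambda_i\by_i^1\otimes\cdots\otimes\by_i^d$ with $\sum_i|\lambda_i|\le 1+\varepsilon$; multilinearity plus the defining bound $|\langle\TT,\by_i^1\otimes\cdots\otimes\by_i^d\rangle|\le\|\TT\|_{p_\sigma}$ then yields $\langle\TT,\Z\rangle\le(1+\varepsilon)\|\TT\|_{p_\sigma}$, and sending $\varepsilon\to 0$ finishes. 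A symmetric argument handles the ``$\ge$'' direction of the second identity: for $\Z$ with $\|\Z\|_{p_\sigma}\le 1$ and any near-optimal decomposition $\TT=\sum_i\lambda_i\bx_i^1\otimes\cdots\otimes\bx_i^d$ with $\sum_i|\lambda_i|\le\|\TT\|_{p_*}+\varepsilon$, one obtains $\langle\TT,\Z\rangle\le\sum_i|\lambda_i|\le\|\TT\|_{p_*}+\varepsilon$.

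The nontrivial step is the ``$\le$'' direction of the second identity, and it is where Hahn--Banach separation must enter. I would introduce the compact convex symmetric set
\[
K := \conv \bigl\{ \pm\,\bx_1\otimes\cdots\otimes\bx_d : \|\bx_k\|_p = 1,\, k=1,\ldots,d \bigr\},
\]
verify using compactness of $\BS_p^{n_k}$ and a Carath\'eodory-style reduction (bounding $r$ in~\eqref{def:nuclear} by $\prod_k n_k + 1$) that the gauge $\gamma_K(\TT)=\inf\{t>0:\TT\in tK\}$ equals $\|\TT\|_{p_*}$ with the infimum attained, and that the support function $\sigma_K(\Z)=\sup_{u\in K}\langle\Z,u\rangle$ equals $\|\Z\|_{p_\sigma}$ (the maximum of a linear functional over a convex hull is attained at an extreme point, and the $\pm$ sign can be absorbed into $\bx_1$). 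Standard Minkowski polar duality for a symmetric convex body with nonempty interior then gives
\[
\gamma_K(\TT) \;=\; \sup\{\langle\TT,\Z\rangle : \sigma_K(\Z)\le 1\},
\]
which is the desired equality (and, applied to $K^{\circ}$, yields an alternative derivation of the first identity as well).

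The main obstacle is the clean identification of $\|\cdot\|_{p_*}$ as the gauge of $K$ on the full tensor space $\R^{n_1\times\cdots\times n_d}$: one needs attainment of the infimum in~\eqref{def:nuclear}, the fact that $K$ has nonempty interior (so $\gamma_K$ is a genuine norm rather than a sublinear functional supported on a subspace), and continuity/finiteness of the involved quantities. Once these routine but slightly delicate convex-analytic verifications are in place, the polar duality argument is standard.
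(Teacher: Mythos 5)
The paper does not prove this lemma; it is quoted directly from~\cite[Lemma~2.5]{chen2020tensor}, so there is no in-paper argument to compare against. Your proof is nevertheless a correct and essentially standard derivation of this duality. The identifications you make --- that $\|\cdot\|_{p_*}$ is the gauge and $\|\cdot\|_{p_\sigma}$ the support function of $K=\conv\{\pm\,\bx_1\otimes\cdots\otimes\bx_d:\|\bx_k\|_p=1\}$, and that the two are linked by the bipolar theorem --- are exactly how this duality is usually established (it is the same mechanism used in~\cite{friedland2018nuclear}). The elementary ``easy'' directions are written out correctly: the length-one decomposition certifies $\|\bx_1^*\otimes\cdots\otimes\bx_d^*\|_{p_*}\le 1$, and the near-optimal decomposition of $\Z$ (resp.\ $\TT$) combined with $|\langle\TT,\by^1\otimes\cdots\otimes\by^d\rangle|\le\|\TT\|_{p_\sigma}$ gives the reverse inequalities after $\varepsilon\to 0$. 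For the separation step you correctly flag the remaining verifications: that $K$ has nonempty interior (true, since the rank-one tensors $\bx_1\otimes\cdots\otimes\bx_d$ span $\R^{n_1\times\cdots\times n_d}$, e.g.\ via the $\be_{i_1}\otimes\cdots\otimes\be_{i_d}$ after normalizing), that the gauge equals the nuclear $p$-norm with the infimum attained (Carath\'eodory bounds $r$ by $1+\prod_k n_k$, and compactness of $\BS_p^{n_1}\times\cdots\times\BS_p^{n_d}\times[-1,1]^r$ gives attainment), and that the support function of a convex hull is realized on the generating set. These checks are routine but genuinely needed for the bipolar theorem $\gamma_K=\sigma_{K^\circ}$ to apply, and it would strengthen the writeup to carry one or two of them out explicitly rather than just naming them; as a proof sketch, however, the argument is sound and complete in structure.
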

\li{From this perspective, the nuclear $p$-norm can be taken as a bilevel optimization problem (see, e.g.,~\cite{nie2017bilevel,nie2021lagrange,nie2023plmes} for more details)
\begin{equation*}
    \|\TT\|_{p_*} =\max\left\{\left\langle\TT, \Z\right\rangle: \left\langle \Z, \bx_1 \otimes\bx_2\otimes \dots \otimes \bx_d \right\rangle -1\le 0,\,(\bx_1,\bx_2,\dots,\bx_d)\in\mathbb{O}(\Z)\right\},
\end{equation*}
where $\mathbb{O}(\Z)$ 
is the set of optimal solutions of $\max \left\{\left\langle\Z, \bx_1 \otimes \bx_2 \otimes \dots \otimes \bx_d\right\rangle:\|\bx_k\|_p=1,\, k=1,2,\dots,d\right\}$.
Similar reformulations can be obtained to the spectral $p$-norm as well as the approximation of the nuclear $p$-norm to be introduced in Lemma~\ref{prop:matrix-equi-pq-vecp}.}

For computational complexity of the spectral $p$-norm and nuclear $p$-norm, the beautiful duality result by Friedland and Lim~\cite{friedland2016computational} states that if one of them can be computed in polynomial time then the other also can, and if one of them is NP-hard to compute then the other is also NP-hard. For $d=1$, the vector case is trivial since the spectral $p$-norm becomes the $\ell_q$-norm where $\frac{1}{p}+\frac{1}{q}=1$ and nuclear $p$-norm becomes the $\ell_p$-norm. For $d=2$, the matrix spectral $p$-norm and nuclear $p$-norm can be computed in polynomial time when $p=1,2,\infty$ and are NP-hard when $p\in(2,\infty)$, while it remains unknown for the rest of $p$~\cite{steinberg2005computation}. However, for $d\ge3$, everything becomes NP-hard or unknown; see~\cite[Proposition~1]{hou2014hardness} and~\cite[Section~7]{friedland2018nuclear} for details. 

\subsection{Hitting set for $\ell_p$-sphere covering}\label{sec:lp-sphere-covering}

Throughout this paper, $p\in[1,\infty]$ is a given constant and $q\in[1,\infty]$ satisfies $\frac{1}{p}+\frac{1}{q}=1$. A set of vectors on $\ell_p$-sphere, $\BH^n=\{\bv_j \in \BS_p^n: j=1,2, \dots, m\}$, is called a hitting set of $\BS_p^n$ with hitting ratio $\tau\ge0$ and cardinality $m\in\BN$ if
$$
\bigcup_{j=1}^m \left\{\bx\in\BS_q^n:\bx^{\T}\bv_j\ge\tau\right\}=\BS_q^n.
$$
For simplicity, we usually call this $\BH^n$ a $\tau$-hitting set. Denote all the hitting sets of $\BS_p^n$ with hitting ratio at least $\tau$ and cardinality at most $m$ to be $\BT^n_p(\tau, m)$, i.e.,
$$
\BT^n_p(\tau, m):=\left\{\BH^n \subseteq \BS_p^n: \BH^n \text{ is a }  \tau \text {-hitting set and }|\BH^n|\le m\right\}.
$$
From the geometric point of view, a hitting set $\BH^n$ of $\BS_p^n$ is a representation of $\BS_p^n$ and approximately covers $\BS_q^n$, in the sense that any $\bx\in\BS_q^n$ is approximately covered by some $\bv_j\in\BH^n$ with $\bx^{\T}\bv_j\ge\tau$. By H\"{o}lder's inequality, $\bx^{\T}\bv_j\le1$ for $\bx\in\BS^n_p$ and $\bv_j\in\BS^n_q$ and so the best possible $\tau$ is obviously one when the hitting set is $\BS^n_p$ itself. The larger the $\tau$, the better coverage to $\BS_q^n$.

\li{Closely related to hitting sets are $\epsilon$-nets and $\epsilon$-covering numbers widely used in geometric functional analysis and statistical learning theory; see, e.g.,~\cite[Chapter~4]{artstein2015asymptotic} and~\cite[Section~4.2]{vershynin2018high}. Given $\epsilon\ge 0$, an $\epsilon$-net of a set is a subset for which any other point in the set has a distance at most $\epsilon$ to the subset and an $\epsilon$-covering number is the smallest possible cardinality of an $\epsilon$-net of the set. 
For $p=2$, the Euclidean sphere, due to the fact that $\|\bx-\by\|_2=\sqrt{2-2\,\bx^{\T}\by}$ for any $\bx,\by\in\BS^n_2$, the concepts of $\epsilon$-nets and hitting sets are equivalent: 
An $\epsilon$-net is the same as a $(1-\frac{\epsilon^2}{2})$-hitting set. 
However, this equivalence does not hold for $p\neq2$. As a simple example, it is easy to see that $\BE^n\cup(-\BE^n)$ is a $1$-hitting set of $\BS_1^n$ and attains the best possible hitting ratio, but it is only a $(1-\frac{1}{n})$-net of $\BS_1^n$ and is actually quite bad in this case.
Even though, we are not aware of any result concerning $\epsilon$-nets and $\epsilon$-covering numbers for general $\ell_p$-spheres. We remark that even in the Euclidean sphere, the study of $\epsilon$-nets in the literature always treats $\epsilon$ as a universal constant and they can only be transferred to hitting sets with a constant hitting ratio. This would result the cardinality of hitting sets being exponential in $n$~\cite[Theorem~3.15]{brieden2001deterministic} while our focus is on designing hitting sets with cardinality being polynomial in $n$.}

We end this section with the norm equivalence property between $\ell_p$-norms that is frequently used in the paper.
\begin{lemma}\label{lma:lp-norm-equiv}
 If $1\le r\le p\le\infty$, then $\|\bx\|_{p}\le\|\bx\|_{r}\le n^{1/r-1/p}\|\bx\|_{p}$ for any $\bx\in\R^n$.
\end{lemma}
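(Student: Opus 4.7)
The plan is to establish the two inequalities separately via classical arguments, handling the finite and infinite exponent cases essentially in parallel. The nontrivial reduction is to $\bx \neq \bd{0}$, after which homogeneity of the $\ell_p$-norms lets us normalize at will.

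For the left inequality $\|\bx\|_p \le \|\bx\|_r$, I would first dispose of the case $r = \infty$ (forcing $p = \infty$, hence equality) and the case $p = \infty$, $r < \infty$, where $|x_i|^r \le \sum_j |x_j|^r$ yields $\|\bx\|_\infty \le \|\bx\|_r$. In the main case $1 \le r \le p < \infty$, by homogeneity I rescale so that $\|\bx\|_r = 1$. Then $|x_i| \le 1$ for every $i$, and since $p \ge r$ this forces $|x_i|^p \le |x_i|^r$ coordinatewise, so summing gives $\|\bx\|_p^p \le \|\bx\|_r^r = 1$.

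For the right inequality $\|\bx\|_r \le n^{1/r - 1/p}\|\bx\|_p$, the key tool is H\"older's inequality with conjugate exponents $p/r$ and $p/(p-r)$ applied to the pair $(|x_i|^r, 1)$:
\begin{equation*}
\sum_{i=1}^n |x_i|^r \cdot 1 \le \left(\sum_{i=1}^n |x_i|^{r \cdot p/r}\right)^{r/p} \left(\sum_{i=1}^n 1^{p/(p-r)}\right)^{(p-r)/p} = \|\bx\|_p^r\, n^{(p-r)/p}.
\end{equation*}
Taking the $r$th root gives exactly the desired bound, since $(p-r)/(pr) = 1/r - 1/p$. The boundary cases $r = p$ and $p = \infty$ need separate (but trivial) treatment: if $r = p$ both sides coincide, and if $p = \infty$ then $\sum_i |x_i|^r \le n\,\|\bx\|_\infty^r$ directly yields $\|\bx\|_r \le n^{1/r}\|\bx\|_\infty$, matching $n^{1/r - 1/\infty}$.

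No serious obstacle is anticipated; the only point requiring care is the bookkeeping of the edge cases $r = \infty$ and $p = \infty$, which must be separated out before invoking H\"older. Everything else is routine once the correct conjugate exponents are chosen.
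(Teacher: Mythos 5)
Your proof is correct. The paper states Lemma~\ref{lma:lp-norm-equiv} without proof, as it is a classical fact, so there is no in-paper argument to compare against; your normalization argument for $\|\bx\|_p\le\|\bx\|_r$ together with H\"older's inequality at conjugate exponents $p/r$ and $p/(p-r)$ for the reverse bound is the standard textbook route, and your separate handling of the $r=\infty$, $p=\infty$, and $r=p$ edge cases is exactly what is needed.
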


\section{Approximating matrix nuclear $p$-norm}\label{sec:matrix-p-norm}

This section is devoted to a polynomial-time method that approximates the matrix nuclear $p$-norm within a factor of $1/\delta_G >\frac{2}{\pi}\ln(1+\sqrt{2}) > 0.561$ when $p\in\BQ\cap(2,\infty)$, where $\delta_G$ 
is the well-known Grothendieck constant~\cite{Grothendieck}.

\subsection{From matrix spectral $p$-norm to matrix nuclear $p$-norm}\label{sec:kG-matrix-nuclear-p-norm}

As mentioned in Section~\ref{sec:norm}, both the matrix spectral $p$-norm and nuclear $p$-norm are NP-hard to compute when $p\in(2,\infty)$. While the approximation of the matrix nuclear $p$-norm is almost blank in the literature, the approximation of the matrix spectral $p$-norm has been well studied under an equivalent concept called the $p\rightarrow q$ norm. In particular, via a convex optimization relaxation, Nesterov~\cite{N00} showed that the matrix spectral $p$-norm can be approximated within a factor of $\frac{2\sqrt{3}}{\pi}-\frac{2}{3}>0.435$. Ben-Tal and Nemirovski~\cite{BN01} and Steinberg~\cite{steinberg2005computation} gave a better analysis of Nesterov's relaxation method but their approximation bound is better only for certain $p$'s and certain dimensions of the matrix. Hou and So~\cite{hou2014hardness} provided the best approximation bound $1/\delta_G>0.561$ that uniformly beats all the above. We quote their result that is used in our analysis for the approximation of the nuclear $p$-norm.
\begin{lemma}[{\cite[Proposition~3]{hou2014hardness}}]\label{lma:vecp}
For any matrix $A\in\R^{m\times n}$ and constant $p\in\BQ\cap(2,\infty)$,
\begin{equation}\label{opt:vecp-original}
\|A\|_{p_v}:= \max \left\{\left\langle \begin{pmatrix}
O & A/2 \\
A^\T/2 & O
\end{pmatrix}, X\right\rangle: \sum_{i=1}^m\left|x_{i i}\right|^{p/2} \le 1,\, \sum_{i=m+1}^{m+n}\left|x_{ii}\right|^{p/2} \le 1,\, X \succeq O\right\}
\end{equation}
satisfies $\|A\|_{p_v}/\delta_G  \le \|A\|_{p_\sigma} \le \|A\|_{p_v}$, where $\delta_G$ is the Grothendieck constant.
\end{lemma}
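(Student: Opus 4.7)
\emph{Proof proposal.} The plan is to split the chain of inequalities and handle the two directions separately. For the upper bound $\|A\|_{p_\sigma}\le\|A\|_{p_v}$, I would construct an explicit feasible point of~\eqref{opt:vecp-original} from a pair of maximizers of $\|A\|_{p_\sigma}$. Given optimal $(\bx,\by)$ with $\|\bx\|_p=\|\by\|_p=1$, I set $\bz=\bx\vee\by$ and $X=\bz\bz^{\T}\succeq O$. Then $x_{ii}=z_i^2$, so the two block diagonal sums $\sum_{i=1}^m|x_{ii}|^{p/2}$ and $\sum_{i=m+1}^{m+n}|x_{ii}|^{p/2}$ equal $\|\bx\|_p^p=1$ and $\|\by\|_p^p=1$, respectively, and a direct block computation yields $\langle M,X\rangle=\bx^{\T}A\by=\|A\|_{p_\sigma}$, where $M$ denotes the bordered matrix in~\eqref{opt:vecp-original}. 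Feasibility of $X$ then gives the desired inequality.

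For $\|A\|_{p_v}\le\delta_G\|A\|_{p_\sigma}$, I would apply Grothendieck's inequality after a Cholesky-type factorization. Let $X^\ast$ be optimal for~\eqref{opt:vecp-original} and write $X^\ast=UU^{\T}$ with rows $u_1,\dots,u_{m+n}$. Setting $r_i=\|u_i\|$ and $\tilde u_i=u_i/r_i$ (or any unit vector when $r_i=0$), feasibility translates to $\|\bd{r}\|_p\le1$ and $\|\bd{r}'\|_p\le1$ for $\bd{r}=(r_1,\dots,r_m)$ and $\bd{r}'=(r_{m+1},\dots,r_{m+n})$, and the objective becomes $\sum_{i=1}^m\sum_{j=1}^n A_{ij}\,r_ir_{m+j}\,\langle\tilde u_i,\tilde u_{m+j}\rangle$. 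I would then invoke Grothendieck's inequality on the coefficient matrix with entries $A_{ij}r_ir_{m+j}$ to bound this quantity by $\delta_G\cdot\max_{s\in\{\pm1\}^m,\,t\in\{\pm1\}^n}\bigl|(\bd{r}\odot s)^{\T}A(\bd{r}'\odot t)\bigr|$, where $\odot$ denotes the entrywise product. Since $\|\bd{r}\odot s\|_p=\|\bd{r}\|_p\le1$ and the bilinear form $\bx^{\T}A\by$ attains its maximum over the $\ell_p$-unit ball on the boundary, the last quantity is at most $\delta_G\|A\|_{p_\sigma}$.

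The main obstacle is the clean invocation of Grothendieck's inequality: I need to match the normalized vectors $\tilde u_i,\tilde u_{m+j}$, which come from a common factorization and hence lie in a shared Hilbert space, with the unit vectors in Grothendieck's statement, and then translate the resulting $\pm1$ maximization back into an $\ell_p$-sphere maximization via scaling. A minor technicality is the case $r_i=0$, whose contribution simply vanishes. Note that the rationality assumption $p\in\BQ\cap(2,\infty)$ plays no role in proving the inequality itself; it is only needed to ensure that~\eqref{opt:vecp-original} admits a conic reformulation via rational power cones and hence is solvable in polynomial time.
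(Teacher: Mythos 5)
Your argument is correct and is the standard proof of this kind of Grothendieck-type SDP bound, which the paper does not reprove but simply cites from Hou and So~\cite[Proposition~3]{hou2014hardness}: the upper bound follows from the rank-one feasible point $X=(\bx\vee\by)(\bx\vee\by)^{\T}$, and the lower bound from factoring an optimal $X^\ast=UU^{\T}$, normalizing the rows, and invoking Grothendieck's inequality on the coefficient matrix $(A_{ij}r_ir_{m+j})$. Your remark that the rationality of $p$ is irrelevant to the inequality itself and only enters through the tractable SOC reformulation of~\eqref{opt:vecp-original} is also accurate.
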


The quantity $\|A\|_{p_v}$ defines a matrix norm that can be solved to arbitrary accuracy in polynomial time using, e.g., the ellipsoid method~\cite{grotschelbook}. In order to approximate the matrix nuclear $p$-norm using the duality in Lemma~\ref{lma:norm-duality}, $\|A\|_{p_*}=\max\{\langle A, Z\rangle:\|Z\|_{p_\sigma} \le 1\}$,
one is naturally suggested using $\|\cdot\|_{p_v}$ to replace $\|\cdot\|_{p_\sigma}$ that is NP-hard to compute. 
In fact, we have the following observation. 
\begin{lemma}\label{prop:matrix-equi-pq-vecp}
For any matrix $A\in\R^{m\times n}$ and constant $p \in\BQ\cap(2, \infty)$,
\begin{align}\label{eq:matrix-nuclear-pnorm-pqnorm}
    \|A\|_{p_*}=\max\left\{\langle A, Z\rangle:\|Z\|_{p_\sigma} \le 1\right\},
\end{align}
and
\begin{equation}\label{eq:matrix-nuclear-pnorm-vecp}
\|A\|_{p_u}:=\max\left\{\langle A,Z\rangle:\|Z\|_{p_v}\le 1\right\}
\end{equation}
are equivalent in the sense that $\|A\|_{p_u}\le \|A\|_{p_*}\le \delta_G \|A\|_{p_u}$.
\end{lemma}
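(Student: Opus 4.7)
The plan is to derive both inequalities by directly exploiting the norm sandwich relation $\|Z\|_{p_v}/\delta_G \le \|Z\|_{p_\sigma} \le \|Z\|_{p_v}$ established in Lemma~\ref{lma:vecp}, interpreting the two quantities in~\eqref{eq:matrix-nuclear-pnorm-pqnorm} and~\eqref{eq:matrix-nuclear-pnorm-vecp} as support functions of the unit balls of $\|\cdot\|_{p_\sigma}$ and $\|\cdot\|_{p_v}$ against the linear functional $\langle A,\cdot\rangle$. Under this lens, a norm inequality immediately translates into an inclusion of unit balls, which in turn implies an inequality between the two support-function values.

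First I would prove the easy direction $\|A\|_{p_u}\le\|A\|_{p_*}$. Since $\|Z\|_{p_\sigma}\le\|Z\|_{p_v}$ for every $Z\in\R^{m\times n}$, any $Z$ feasible for~\eqref{eq:matrix-nuclear-pnorm-vecp} (i.e., $\|Z\|_{p_v}\le 1$) automatically satisfies $\|Z\|_{p_\sigma}\le 1$, hence is feasible for~\eqref{eq:matrix-nuclear-pnorm-pqnorm}. Maximizing the same linear functional $\langle A,Z\rangle$ over a smaller set yields a smaller optimum.

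For the reverse direction $\|A\|_{p_*}\le\delta_G\|A\|_{p_u}$, I would pick an optimizer $Z^*$ of~\eqref{eq:matrix-nuclear-pnorm-pqnorm} so that $\langle A,Z^*\rangle=\|A\|_{p_*}$ and $\|Z^*\|_{p_\sigma}\le 1$. By the other half of Lemma~\ref{lma:vecp}, $\|Z^*\|_{p_v}\le\delta_G\|Z^*\|_{p_\sigma}\le\delta_G$, so $Z^*/\delta_G$ is feasible for~\eqref{eq:matrix-nuclear-pnorm-vecp}. Plugging this candidate into the max defining $\|A\|_{p_u}$ gives $\|A\|_{p_u}\ge\langle A,Z^*/\delta_G\rangle=\|A\|_{p_*}/\delta_G$, which rearranges to the claim. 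A small technical remark would confirm that the maxima in both problems are attained (the feasible sets are compact since $\|\cdot\|_{p_\sigma}$ and $\|\cdot\|_{p_v}$ are norms on the finite-dimensional space $\R^{m\times n}$).

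There is essentially no obstacle here; the entire argument is a two-line consequence of Lemma~\ref{lma:vecp} combined with monotonicity of support functions with respect to inclusion of their argument sets. The only conceptual point worth emphasizing is that $\|\cdot\|_{p_v}$ is indeed a norm, so~\eqref{eq:matrix-nuclear-pnorm-vecp} genuinely defines the dual norm of $\|\cdot\|_{p_v}$; this is implicit in Lemma~\ref{lma:vecp} but I would state it once for clarity. The resulting constant-factor equivalence is exactly what the rest of Section~\ref{sec:matrix-p-norm} needs in order to reduce the approximation of the matrix nuclear $p$-norm to a tractable convex problem.
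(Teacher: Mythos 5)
Your proof is correct and follows essentially the same two-step argument as the paper: the easy direction via the containment of feasible sets implied by $\|Z\|_{p_\sigma}\le\|Z\|_{p_v}$, and the reverse direction by rescaling an optimizer of~\eqref{eq:matrix-nuclear-pnorm-pqnorm} by $1/\delta_G$ and checking feasibility for~\eqref{eq:matrix-nuclear-pnorm-vecp}. The support-function framing and the compactness remark are harmless embellishments that do not change the substance of the argument.
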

\begin{proof}
   We see from Lemma~\ref{lma:vecp} that $\|Z\|_{p_v}\le 1$ implies $\|Z\|_{p_\sigma}\le 1$. Therefore,~\eqref{eq:matrix-nuclear-pnorm-pqnorm} is a relaxation of~\eqref{eq:matrix-nuclear-pnorm-vecp}, implying that $\|A\|_{p_u}\le \|A\|_{p_*}$.
   
   On the other hand, let $Y$ be an optimal solution of~\eqref{eq:matrix-nuclear-pnorm-pqnorm}, i.e., $\|A\|_{p_*}=\langle A,Y\rangle$ and $\|Y\|_{p_\sigma}\le1$. By Lemma~\ref{lma:vecp} again, one has 
   $$
   \|Y/\delta_G\|_{p_v}=\|Y\|_{p_v}/\delta_G\le  \|Y\|_{p_\sigma}\le1,
   $$
   implying that $Y/\delta_G$ is a feasible solution to~\eqref{eq:matrix-nuclear-pnorm-vecp}. Therefore, $\|A\|_{p_u}\ge \langle A,Y/\delta_G\rangle=\|A\|_{p_*}/\delta_G$, proving the upper bound.
\end{proof}

For the purpose of obtaining a constant approximation bound of the matrix nuclear $p$-norm, it suffices to study~\eqref{eq:matrix-nuclear-pnorm-vecp} because of Lemma~\ref{prop:matrix-equi-pq-vecp}. Reformulate~\eqref{eq:matrix-nuclear-pnorm-vecp} in a more explicit way, we have
\begin{equation}\label{eq:matrix-nuclear-pnorm-vecp-essence}
\|A\|_{p_u}=\max \left\{\left\langle A,Z\right\rangle: \left\langle \begin{pmatrix}
O & Z/2 \\
Z^\T/2 & O
\end{pmatrix}, X\right\rangle\le1 \text{ for all } X\in\mathbb{M}^{(m+n)\times (m+n)}\right\},
\end{equation}
where
$$
\mathbb{M}^{(m+n)\times (m+n)}:=\left\{X\in\R^{(m+n)\times(m+n)}: \sum_{i=1}^m\left|x_{i i}\right|^{p/2} \le 1,\, \sum_{i=m+1}^{m+n}\left|x_{ii}\right|^{p/2} \le 1,\, X \succeq O\right\}.
$$
The maximization formulation of $\|A\|_{p_v}$ in~\eqref{opt:vecp-original} makes~\eqref{eq:matrix-nuclear-pnorm-vecp-essence} difficult to handle because of an infinite number of constraints in $X\in\mathbb{M}^{(m+n)\times (m+n)}$. Inspired by a technique in robust optimization~\cite{ben2009book, goldfarb2003robust}, the constraint can be completely explicit if one can equivalently transform the maximization problem to a minimization one. This motivates us to study the dual problem of~\eqref{opt:vecp-original}.

\subsection{Equivalent formulations for $\|\cdot\|_{p_v}$ and $\|\cdot\|_{p_u}$}\label{sec:dual-vecp} 

We first derive an equivalent formulation of $\|\cdot\|_{p_v}$ via the dual problem of~\eqref{opt:vecp-original}. 

\begin{proposition}\label{lma:dual-vecp}
    The dual problem of~\eqref{opt:vecp-original} is
    \begin{equation}\label{opt:dual-vecp-finite}
    \begin{array}{lll}
    \|A\|_{p_v}= &\min & u_1+ u_2+\theta_p\sum_{i=1}^{m+n} t_i    \\
    &\st   & {v_i}^{p/(p-2)}\le t_i {u_1}^{2/(p-2)}\quad i=1,2,\dots, m \\
    && {v_i}^{p/(p-2)}\le t_i {u_2}^{2/(p-2)}\quad i=m+1,m+2,\dots, m+n\\ 
    &&  u_1\ge0, \,u_2\ge 0, \,   \bt\ge {\bf 0},\, \Diag(\bv)\succeq \begin{pmatrix} O & A/2 \\ A^\T/2 & O \end{pmatrix},
    \end{array}
    \end{equation}
    where $\theta_p:=(2/p)^{2/(p-2)}-(2/p)^{p/(p-2)}>0$ when $p\in(2,\infty)$.
\end{proposition}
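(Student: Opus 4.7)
The plan is to derive~\eqref{opt:dual-vecp-finite} via Lagrangian duality after decoupling the nonlinear diagonal constraint from the PSD structure. First, I would introduce an auxiliary nonnegative vector $\bd{d}\in\R^{m+n}$ with the equality $\diag(X)=\bd{d}$, rewriting~\eqref{opt:vecp-original} equivalently as
\[
\max\Bigl\{\langle M,X\rangle:\,X\succeq O,\,\diag(X)=\bd{d},\,\bd{d}\ge\bd{0},\,\textstyle\sum_{i=1}^m d_i^{p/2}\le 1,\,\textstyle\sum_{i=m+1}^{m+n}d_i^{p/2}\le 1\Bigr\},
\]
where $M$ denotes the block matrix in the objective of~\eqref{opt:vecp-original}. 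Attaching multipliers $\bv\in\R^{m+n}$ to the equality and $u_1,u_2\ge 0$ to the two inequalities, the Lagrangian becomes
\[
L=\langle M-\Diag(\bv),X\rangle+\bv^{\T}\bd{d}-u_1\textstyle\sum_{i=1}^m d_i^{p/2}-u_2\textstyle\sum_{i=m+1}^{m+n}d_i^{p/2}+u_1+u_2,
\]
which cleanly separates the dependence on $X$ from that on $\bd{d}$.

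Next I would evaluate $\sup_{X\succeq O,\,\bd{d}\ge\bd{0}}L$ piece by piece. The PSD part is standard conic duality: $\sup_{X\succeq O}\langle M-\Diag(\bv),X\rangle$ equals $0$ when $\Diag(\bv)\succeq M$ and $+\infty$ otherwise, and since the PSD matrix $\Diag(\bv)-M$ has diagonal $\bv$, this condition automatically forces $\bv\ge\bd{0}$. The $\bd{d}$-part then decouples componentwise into scalar subproblems $\sup_{d_i\ge 0}\{v_id_i-u_kd_i^{p/2}\}$ for the appropriate $u_k\in\{u_1,u_2\}$. Setting the derivative to zero gives $d_i=(2v_i/(pu_k))^{2/(p-2)}$, and substituting back yields
\[
\sup_{d_i\ge 0}\{v_id_i-u_kd_i^{p/2}\}=\theta_p\,v_i^{p/(p-2)}/u_k^{2/(p-2)},
\]
with $\theta_p=(2/p)^{2/(p-2)}-(2/p)^{p/(p-2)}=(2/p)^{2/(p-2)}\cdot(p-2)/p>0$. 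Introducing slack variables $t_i\ge v_i^{p/(p-2)}/u_k^{2/(p-2)}$, equivalently $v_i^{p/(p-2)}\le t_iu_k^{2/(p-2)}$ (tight at any minimizer), linearizes the dual objective and reproduces exactly~\eqref{opt:dual-vecp-finite}.

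To conclude, I would promote the weak-duality inequality to equality by invoking Slater's condition: the interior point $X=\epsilon I$ with $\epsilon>0$ sufficiently small strictly satisfies $X\succ O$ together with both inequality constraints in~\eqref{opt:vecp-original}, so there is no duality gap. The step I anticipate as the main obstacle is the scalar calculus producing the exact constant $\theta_p$; the exponents $2/(p-2)$ and $p/(p-2)$ combine delicately, and a small care is needed at the boundary $u_k\to 0^+$ where the dual objective blows up unless the corresponding $v_i$'s vanish, so that $u_k=0$ is admissible in the final form~\eqref{opt:dual-vecp-finite} only when the associated $v_i$'s are $0$.
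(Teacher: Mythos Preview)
Your proposal is correct and follows essentially the same Lagrangian-duality approach as the paper. The only cosmetic difference is that you decouple by introducing an auxiliary diagonal vector $\bd{d}=\operatorname{diag}(X)$ with multiplier $\bv$, whereas the paper attaches a PSD multiplier $V\succeq O$ directly to $X\succeq O$ and then observes that the off-diagonal part of $V$ is forced to equal $-M$, leaving only its diagonal (renamed $\bv$); both routes land on the same constraint $\Diag(\bv)\succeq M$ and the same scalar subproblems $\sup_{d\ge 0}\{vd-ud^{p/2}\}=\theta_p v^{p/(p-2)}/u^{2/(p-2)}$, and both close with Slater's condition.
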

\begin{proof}
Denote $B=\begin{pmatrix}O & A/2 \\A^\T/2 & O\end{pmatrix}$. The Lagrangian function associated with the maximization problem~\eqref{opt:vecp-original} is
\begin{equation*}
\begin{aligned}
    &~~~f(X, u_1, u_2, V)\\
    &=\langle B, X\rangle+ u_1 \left(1-\sum_{i=1}^m |x_{i i}|^{p/2}\right) +  u_2\left(1-\sum_{j=m+1}^{m+n}|x_{j j}|^{p/2}\right) + \left\langle  V, X\right\rangle
    \\&=u_1+u_2-\sum_{i=1}^m u_1|x_{i i}|^{p/2}-\sum_{j=m+1}^{m+n}u_2|x_{ii}|^{p/2}+ \left\langle B+ V, X\right\rangle
    \\&=u_1+u_2-\sum_{i=1}^m u_1|x_{i i}|^{p/2}+\sum_{i=1}^{m} v_{i i}x_{i i}-\sum_{j=m+1}^{m+n}u_2|x_{ii}|^{p/2}+ \sum_{i=m+1}^{m+n} v_{ii}x_{ii} + \left\langle B+ V-\Diag(V), X-\Diag(X)\right\rangle,
\end{aligned}
\end{equation*}
where $V\succeq O$ is the multiplier associated with the constraint $X\succeq O$ and $u_1\ge0$ and $u_2\ge 0$ are the multipliers associated with the constraints $\sum_{i=1}^m |x_{i i}|^{p/2}\le 1$ and $\sum_{i=m+1}^{m+n}|x_{i}|^{p/2}\le 1$, respectively. The last equality holds because $\Diag(B)=O$. It is purposely written as the sum of two parts with the first part involving only the diagonal entries of $X$ while the second part (the inner product) involving only the off-diagonal entries of $X$.

Let us analyze $\max\left\{f(X, u_1, u_2, V):X\in\R^{(m+n)\times(m+n)}\right\}$ based on the two parts in the last equality. First we must have $B+V-\Diag(V)=O$ since $X$ is a free matrix variable and any off-diagonal entry of $X$ appears only in $\left\langle B+ V-\Diag(V), X-\Diag(X)\right\rangle$. It remains to deal with the part involving the diagonal entries of $X$. In fact, this part enjoys a separable structure for all $x_{ii}$'s, each of which appears in a same type of subproblem $\max\left\{-u |x|^{p/2}+vx:x\in\R\right\}$ with $u,v\ge0$. This is because of $u_1\ge0, \,u_2\ge 0$ and $V\succeq O$.

Let us consider the following cases for $\max\left\{-u |x|^{p/2}+vx:x\in\R\right\}$ with $u,v\ge0$.
\begin{itemize}
    \item If $v=0$ and $u\ge0$, then $\max\left\{-u|x|^{p/2}+vx:x\in\R\right\}=0$.
    \item If $v>0$ and $u=0$, then $\max\left\{-u |x|^{p/2}+vx:x\in\R\right\}=+\infty$.
    \item If $v>0$ and $u>0$, by noticing that an optimal $x$ must be nonnegative and $p\in(2,\infty)$, it can be calculated that 
    $$
    \max\left\{-u |x|^{p/2}+vx:x\in\R\right\}=\left(\left(\frac{2}{p}\right)^{2/(p-2)}-\left(\frac{2}{p}\right)^{p/(p-2)}\right)\frac{v^{p/(p-2)}}{u^{2/(p-2)}}=\frac{\theta_pv^{p/(p-2)}}{u^{2/(p-2)}}.
    $$
\end{itemize}
To summarize, we have
$$
\max\left\{-u |x|^{p/2}+vx:x\in\R\right\}=\theta_pg_p(u,v),
$$
where
\begin{align*}
     g_p(u,v)&=\begin{cases}
    \frac{ v^{p/(p-2)}}{ u^{2/(p-2)}} &u> 0 \\
    0 & u=v=0 \\
    +\infty & u=0, v>0.
    \end{cases}
\end{align*}

With the above calculation, we obtain the following dual formulation of~\eqref{opt:vecp-original} by minimizing $\max\left\{f(X, u_1, u_2, V):X\in\R^{(m+n)\times(m+n)}\right\}$ over $u_1,u_2\ge0$ and $V\succeq O$
    \begin{equation} \label{eq:dualproof}
    \begin{array}{ll}
  \min &u_1+ u_2+\theta_p\sum_{i=1}^m g_p(u_1, v_{ii})+\theta_p\sum_{i=m+1}^{m+n} g_p(u_2, v_{ii})\\
 \st
    &  B+ V-\Diag( V)=O \\
    &  u_1\ge0, \,u_2\ge 0,\,  V\succeq O.
    \end{array}
    \end{equation}

It remains a couple of treatments in order to make the above formulation simpler and explicit. First, the off-diagonal part of $V$ must be $-B$ since $\Diag(B)=O$ and $B+V-\Diag(V)=O$ in the constraint. $V\succeq O$ then becomes $\Diag(V)-B\succeq O$ and this makes the off-diagonal entries of $V$ disappeared completely in~\eqref{eq:dualproof}. We use a new notation $v_i$ to replace $v_{ii}$ for $i=1,2,\dots,m+n$ and denote $\bv\in\R^{m+n}$ be the vector consisting of these $v_i$'s. Therefore, $B+ V-\Diag( V)=O$ and $V\succeq O$ are equivalent to $\Diag(\bv)-B\succeq O$. 
Next, as $\theta_p>0$ and~\eqref{eq:dualproof} is a minimization problem, we may let $g_p(u_1,v_{i})=t_i$ for $i=1,2,\dots,m$ and $g_p(u_2,v_{i})=t_i$ for $i=m+1,m+2,\dots,m+n$ in the objective function and add additional constraints $g_p(u_j,v_{i})\le t_i$ for corresponding $j$ and $i$. An important observation is that 
$$
\min\left\{ t: g_p(u,v)\le t \right\} = \min\left\{ t: v^{p/(p-2)} \le t u^{2/(p-2)},\,t\ge0 \right\} \text{ when }u,v\ge0.
$$
With these two observations,~\eqref{eq:dualproof} can be equivalently formulated to
    \begin{equation*} 
    \begin{array}{ll}
  \min &u_1+ u_2+\theta_p\sum_{i=1}^{m+n} t_i\\
    \st   & {v_i}^{p/(p-2)}\le t_i {u_1}^{2/(p-2)}\quad i=1,2,\dots, m \\
    & {v_i}^{p/(p-2)}\le t_i {u_2}^{2/(p-2)}\quad i=m+1,m+2,\dots, m+n\\ 
    &  u_1\ge0, \,u_2\ge 0,\, \bt\ge{\bf 0},\, \Diag(\bv)-B\succeq O.
    \end{array}
    \end{equation*}

Finally, to show that the objective value of the above optimization problem is equal to $\|A\|_{p_v}$, we only need to show the strong duality. In fact, by letting $Y=\frac{1}{2}\begin{pmatrix}m^{-2/p}I_m & O \\
O & n^{-2/p}I_n
\end{pmatrix}$, one has $\sum_{i=1}^m |y_{i i}|^{p/2} = \sum_{i=m+1}^{m+n}|y_{ii}|^{p/2}= (\frac{1}{2})^{p/2} < 1$ and $Y\succ O$, i.e., $Y$ is strictly feasible to~\eqref{opt:vecp-original}. This means that the Slater's condition holds for the convex optimization problem~\eqref{opt:vecp-original} and hence the strong duality holds~\cite[Section~5.2.3]{boyd2004convex}.
\end{proof}



Armed with Proposition~\ref{lma:dual-vecp}, the infinite number of constraints in~\eqref{eq:matrix-nuclear-pnorm-vecp-essence} hold if and only if there exists one feasible solution in~\eqref{eq:dualproof} satisfying this constraint. As a result,
we are able to 
equivalently reformulate $\|A\|_{p_u}$, i.e., either~\eqref{eq:matrix-nuclear-pnorm-vecp} or~\eqref{eq:matrix-nuclear-pnorm-vecp-essence}, to a completely explicit optimization problem.
\begin{corollary}\label{cor:pnorm-simpler}
For any matrix $A\in\R^{m\times n}$ and constant $p \in\BQ\cap(2, \infty)$,
 \begin{equation}\label{eq:matrix-nuclear-pnorm-vecp-dual}
    \begin{array}{lll}
    \|A\|_{p_u}= &\max & \langle A,Z\rangle    \\
    &\st   &u_1+ u_2+\theta_p\sum_{i=1}^{m+n} t_i\le 1\\
    && {v_i}^{p/(p-2)}\le t_i {u_1}^{2/(p-2)}\quad i=1,2,\dots, m \\
    && {v_i}^{p/(p-2)}\le t_i {u_2}^{2/(p-2)}\quad i=m+1,m+2,\dots, m+n\\ 
    &&  u_1\ge0, \,u_2\ge 0, \,   \bt\ge {\bf 0},\, \Diag(\bv)\succeq \begin{pmatrix}
O & Z/2 \\
Z^\T/2 & O
\end{pmatrix}.
    \end{array}
    \end{equation}
\end{corollary}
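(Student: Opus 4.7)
The approach is to use Proposition~\ref{lma:dual-vecp} as a black box to convert the implicit constraint $\|Z\|_{p_v}\le 1$ that is buried in the definition of $\|A\|_{p_u}$ (equations~\eqref{eq:matrix-nuclear-pnorm-vecp} and~\eqref{eq:matrix-nuclear-pnorm-vecp-essence}) into an explicit system of constraints, and then to merge this system with the outer maximization in $Z$ into a single joint optimization problem.

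First I would fix a matrix $Z\in\R^{m\times n}$ and observe that Proposition~\ref{lma:dual-vecp}, applied with $Z$ in place of $A$, expresses $\|Z\|_{p_v}$ as the minimum value of the dual problem~\eqref{opt:dual-vecp-finite}. Consequently, the constraint $\|Z\|_{p_v}\le 1$ is equivalent to the existence of scalars $u_1, u_2\ge 0$, a vector $\bt\ge\bd{0}$, and a vector $\bv\in\R^{m+n}$ satisfying simultaneously $\Diag(\bv)\succeq \bigl(\begin{smallmatrix} O & Z/2 \\ Z^\T/2 & O \end{smallmatrix}\bigr)$, the componentwise inequalities ${v_i}^{p/(p-2)}\le t_i {u_1}^{2/(p-2)}$ for $i=1,\dots,m$ and ${v_i}^{p/(p-2)}\le t_i {u_2}^{2/(p-2)}$ for $i=m+1,\dots,m+n$, together with $u_1+u_2+\theta_p\sum_{i=1}^{m+n} t_i\le 1$.

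Second, I would substitute this equivalent characterization into $\|A\|_{p_u}=\max\{\langle A,Z\rangle:\|Z\|_{p_v}\le 1\}$. Since the replaced constraint is a pure existence statement over auxiliary variables $(u_1,u_2,\bt,\bv)$ that do not appear in the objective $\langle A,Z\rangle$, the outer maximization in $Z$ and the inner existential quantifier can be merged into a single joint maximization in the variables $(Z, u_1, u_2, \bt, \bv)$ subject to all listed constraints. The resulting problem is precisely~\eqref{eq:matrix-nuclear-pnorm-vecp-dual}, and the optimal value coincides with $\|A\|_{p_u}$.

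The only subtle point is to ensure the equivalence ``$\min \le 1$ iff some feasible point achieves objective $\le 1$'' used above. This is handled by the strong duality and dual attainment established inside the proof of Proposition~\ref{lma:dual-vecp} via the Slater point $Y=\tfrac12\bigl(\begin{smallmatrix} m^{-2/p}I_m & O \\ O & n^{-2/p}I_n\end{smallmatrix}\bigr)$; once the dual minimum is attained the equivalence is immediate. I therefore expect no real obstacle: the corollary is essentially a direct consequence of Lemma~\ref{prop:matrix-equi-pq-vecp} and Proposition~\ref{lma:dual-vecp} combined through robust-optimization-style dualization of the inner constraint.
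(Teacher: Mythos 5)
Your proof is correct and takes essentially the same route as the paper: apply Proposition~\ref{lma:dual-vecp} to replace the implicit constraint $\|Z\|_{p_v}\le 1$ in~\eqref{eq:matrix-nuclear-pnorm-vecp} by the existence of dual-feasible auxiliary variables $(u_1,u_2,\bt,\bv)$ with dual objective at most $1$, and then absorb that existential quantifier into the outer maximization over $Z$. The paper passes silently over the ``$\min\le 1$ iff some feasible point has objective $\le 1$'' step, whereas you explicitly anchor it to dual attainment via the Slater point; that is a careful and welcome clarification but not a different argument.
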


Problem~\eqref{eq:matrix-nuclear-pnorm-vecp-dual} greatly relieves the unpleasant constraint $X\in\mathbb{M}^{(m+n)\times (m+n)}$ in~\eqref{eq:matrix-nuclear-pnorm-vecp-essence}. However, it is still not perfect because of a series of nonlinear constraints of the type ${v}^{p/(p-2)}\le t {u}^{2/(p-2)}$ albeit the remaining parts are either linear constraint or semidefinite constraint. In fact, $\{(u,v,t)\in\R_+^3:v^{p/(p-2)}\le t u^{2/(p-2)}\}$ admits an SOC representation, shown in the next subsection.

\subsection{Conic optimization model for $\|\cdot\|_{p_u}$}\label{sec:SDP-relax-nuclear-p-norm}

Let us start to deal with the constraint $v^{p/(p-2)}\le t u^{2/(p-2)}$. We first quote a result from a book by Ben-Tal and Nemirovski~\cite{ben2001lectures} together with a detailed construction. 
\begin{lemma}[{\cite[Example~3.11]{ben2001lectures}}]\label{lma:P-SOC} For any $k\in\BN$, the set
$$
\left\{\bx \in \R^{2^k}_+,\,y\in\R_+ : 
y \le\left(\prod_{i=1}^{2^k}x_i\right)^{2^{-k}}\right\}
$$
is SOC representable. In particular, when $k\ge2$, by defining additional variables $\bz^i\in\R^{2^i}_+$ for $i=1,2,\dots,k-1$, this set is the projection to the first $2^k+1$ entries of the following set defined by
\begin{equation} \label{eq:SOCA1P}
\begin{cases}
\left\|\left(
    y, \frac{1}{2}(z^1_1-z^1_2)
\right)\right\|_2\le\frac{1}{2}(z^1_1+z^1_2)\\
\left\|\left(
    z^{i}_{j}, \frac{1}{2}(z^{i+1}_{2j-1}-z^{i+1}_{2j})
\right)\right\|_2\le\frac{1}{2}(z^{i+1}_{2j-1}+z^{i+1}_{2j})& i=1,2,\dots,k-2,\, j=1,2,\dots 2^{i}\\
\left\|\left(
    z^{k-1}_{j}, \frac{1}{2}(x_{2j-1}-x_{2j})
\right)\right\|_2\le\frac{1}{2}(x_{2j-1}+x_{2j})& j=1,2,\dots, 2^{k-1}\\
\bx\ge{\bf 0},\,y\ge0,\,\bz^i\ge{\bf 0} & i=1,2,\dots, k-1.
\end{cases}
\end{equation}
\end{lemma}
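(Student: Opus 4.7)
The plan is to realize the geometric mean $\left(\prod_{i=1}^{2^k} x_i\right)^{2^{-k}}$ as a balanced binary tree of pairwise geometric means, each of which admits a standard SOC representation via the hyperbolic constraint. The elementary building block is: for nonnegative reals $a,b,c$, the inequality $c\le\sqrt{ab}$ is equivalent to $c^2\le ab$, which is SOC representable as $\|(c,\,(a-b)/2)\|_2\le(a+b)/2$. This is precisely the form of every inequality appearing in~\eqref{eq:SOCA1P}.

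For the base case $k=1$, the set $\{(\bx,y)\in\R_+^2\times\R_+: y\le\sqrt{x_1 x_2}\}$ is exactly the above hyperbolic set, so no auxiliary variables are needed. For $k\ge 2$, I would read~\eqref{eq:SOCA1P} as encoding a perfect binary tree of depth $k$: the $2^k$ leaves are $x_1,\dots,x_{2^k}$; the $2^i$ nodes at depth $k-i$ from the root are $z^i_1,\dots,z^i_{2^i}$ for $i=1,\dots,k-1$; and the single root is $y$. Each parent is forced by one of the listed SOC constraints to be at most the geometric mean of its two children. Reading the tree from the leaves upward: the bottom layer gives $z^{k-1}_j\le\sqrt{x_{2j-1}x_{2j}}$, the intermediate layers give $z^{i}_j\le\sqrt{z^{i+1}_{2j-1}z^{i+1}_{2j}}$, and the root layer gives $y\le\sqrt{z^1_1 z^1_2}$.

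To verify that the projection onto the first $2^k+1$ coordinates equals the target set, I would argue both inclusions. For the forward direction, given $(\bx,y)$ with $y\le\left(\prod_i x_i\right)^{2^{-k}}$, I would set every internal node to the exact geometric mean of its two children; by induction $z^i_j$ then equals the $2^{-(k-i)}$-power of the product of the $2^{k-i}$ leaves in its subtree, so every non-root constraint holds with equality and $\sqrt{z^1_1 z^1_2}=\left(\prod_i x_i\right)^{2^{-k}}$, making the root constraint hold by hypothesis on $y$. For the reverse direction, given any feasible tuple $(\bx,y,\bz^1,\dots,\bz^{k-1})$, I would propagate the inequalities bottom-up: substituting $z^{k-1}_{2j-1}\le\sqrt{x_{4j-3}x_{4j-2}}$ and $z^{k-1}_{2j}\le\sqrt{x_{4j-1}x_{4j}}$ into $z^{k-2}_j\le\sqrt{z^{k-1}_{2j-1}z^{k-1}_{2j}}$ yields $z^{k-2}_j\le\left(\prod_{\ell=4j-3}^{4j}x_\ell\right)^{1/4}$, and iterating this telescoping along the tree eventually produces $y\le\left(\prod_i x_i\right)^{2^{-k}}$.

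The main obstacle I anticipate is notational rather than mathematical: carefully matching the indexing in~\eqref{eq:SOCA1P} to the tree picture and confirming that at each level the subtree rooted at $z^i_j$ covers exactly the contiguous block of $2^{k-i}$ leaves with indices $(j-1)2^{k-i}+1$ through $j\cdot 2^{k-i}$. Once this bookkeeping is in place, the forward direction is immediate, and the reverse direction is a routine induction that uses monotonicity of the square root on $\R_+$ together with the nonnegativity constraints $\bx\ge\bd{0}$, $\bz^i\ge\bd{0}$, $y\ge 0$ to preserve inequalities when taking roots.
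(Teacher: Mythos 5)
Your proof is correct; the tree-of-geometric-means construction, with each parent bounded by the geometric mean of its two children via the hyperbolic constraint $\|(c,(a-b)/2)\|_2\le (a+b)/2 \iff c^2\le ab$, is the standard argument. Note, however, that the paper itself gives no proof of this lemma: it is quoted directly from Ben-Tal and Nemirovski (Example~3.11 of that book), so there is nothing in the paper to compare against, and your writeup essentially reconstructs what the cited reference does. One small slip in your prose: you place the layer $z^i_1,\dots,z^i_{2^i}$ at depth $k-i$ from the root, but the constraints in~\eqref{eq:SOCA1P} put it at depth $i$ (the $z^1$ layer are the two children of $y$, and the $z^{k-1}$ layer are the parents of the leaves $x_{2j-1},x_{2j}$); this is just a wording slip, since the rest of your argument reads each constraint, and the subtree-to-leaf-block bookkeeping $(j-1)2^{k-i}+1,\dots,j\cdot 2^{k-i}$, in the correct orientation.
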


We now provide an SOC representation of $\{(u,v,t)\in\R_+^3:v^{p/(p-2)}\le t u^{2/(p-2)}\}$.
\begin{proposition}\label{prop:SOC-A-1P}
For any constant $p\in\BQ\cap(2,\infty)$, denote $1/p=a/b$ with $a,b\in\BN$ being mutually prime and let $k=\lceil\log_2 (b+1)\rceil$. It holds that
\begin{align}
\BK^3_p:=&\{(u,v,t)\in\R_+^3:v^{p/(p-2)}\le t u^{2/(p-2)}\}  \nonumber \\
=&\{(u,v,t)\in\R^3: (u,v,t,\bz^1,\bz^2,\dots,\bz^{k-1}) \text{ satisfies~\eqref{eq:SOC-A-1P}},\,\bz^i\in\R^{2^i} \text{ for } i=1,2,\dots,k-1 \}, \label{eq:J3p}
\end{align}
where
\begin{equation} \label{eq:SOC-A-1P}
\begin{cases}
\left\|\left(
    v, \frac{1}{2}(z^1_1-z^1_2)
\right)\right\|_2\le\frac{1}{2}(z^1_1+z^1_2)\\
\left\|\left(
    z^{i}_{j}, \frac{1}{2}(z^{i+1}_{2j-1}-z^{i+1}_{2j})
\right)\right\|_2\le\frac{1}{2}(z^{i+1}_{2j-1}+z^{i+1}_{2j})& i=1,2,\dots,k-2,\, j=1,2,\dots 2^{i}\\
z^{k-1}_{j}\le u& j=1,2,\dots, a\\
z^{k-1}_{j}\le t& j=a+1,a+2,\dots, \lfloor\frac{b}{2}\rfloor\\
\left\|\left(
    z^{k-1}_{j}, \frac{1}{2}(t-v)
\right)\right\|_2\le\frac{1}{2}(t+v)& j=\lfloor\frac{b}{2}\rfloor+1,\lfloor\frac{b}{2}\rfloor+2,\dots,\lceil\frac{b}{2}\rceil\\
z^{k-1}_{j}\le v& j=\lceil\frac{b}{2}\rceil+1,\lceil\frac{b}{2}\rceil+2,\dots, 2^{k-1}\\
\bz^i\ge{\bf 0} & i=1,2,\dots, k-1\\
u\ge 0,\,v\ge0,\,t\ge0.
\end{cases}
\end{equation}
\end{proposition}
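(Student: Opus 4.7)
The plan is to establish the two set inclusions defining the identity~\eqref{eq:J3p}, building on the binary-tree, geometric-mean type representation from Lemma~\ref{lma:P-SOC}. A uniform observation that drives every step is the following: for $\alpha,\beta,x\ge0$, the SOC constraint $\|(x,(\alpha-\beta)/2)\|_2\le(\alpha+\beta)/2$ is equivalent to $x^2\le\alpha\beta$. All SOC constraints appearing in~\eqref{eq:SOC-A-1P} fall into this form, so I would begin by rewriting each of them in squared product form. In parallel, using $1/p=a/b$, the defining inequality $v^{p/(p-2)}\le t\,u^{2/(p-2)}$ of $\BK_p^3$ is (after raising to the positive power $p-2$) equivalent to $v^b\le u^{2a}t^{b-2a}$, which becomes the target inequality for both directions.

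For the $\supseteq$ direction, I would square the root SOC to get $v^2\le z^1_1 z^1_2$, square each internal SOC to get $(z^i_j)^2\le z^{i+1}_{2j-1}z^{i+1}_{2j}$, and square the middle last-level SOCs to get $(z^{k-1}_j)^2\le tv$. Telescoping along the binary tree of depth $k-1$ yields $v^{2^{k-1}}\le\prod_{j=1}^{2^{k-1}} z^{k-1}_j$. Substituting the linear leaf bounds from~\eqref{eq:SOC-A-1P} in the four groups produces
\[
\prod_{j=1}^{2^{k-1}} z^{k-1}_j\;\le\;u^{a}\,t^{\lfloor b/2\rfloor-a}\,(tv)^{(\lceil b/2\rceil-\lfloor b/2\rfloor)/2}\,v^{2^{k-1}-\lceil b/2\rceil}.
\]
Using $\lfloor b/2\rfloor+(\lceil b/2\rceil-\lfloor b/2\rfloor)/2=b/2=\lceil b/2\rceil-(\lceil b/2\rceil-\lfloor b/2\rfloor)/2$, the right-hand side simplifies uniformly (in the parity of $b$) to $u^a t^{b/2-a}v^{2^{k-1}-b/2}$. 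The chain $v^{2^{k-1}}\le u^a t^{b/2-a}v^{2^{k-1}-b/2}$ then collapses to exactly $v^b\le u^{2a}t^{b-2a}$, giving $(u,v,t)\in\BK_p^3$.

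For the $\subseteq$ direction, given $(u,v,t)\in\BK_p^3$ I would construct the tree explicitly. Set the $2^{k-1}$ leaves $z^{k-1}_j$ equal to $u$, $t$, $\sqrt{tv}$, or $v$ exactly according to the four groups of~\eqref{eq:SOC-A-1P} (making every last-level constraint tight). Then fill each internal node by the geometric mean of its two children, $z^i_j=\sqrt{z^{i+1}_{2j-1}z^{i+1}_{2j}}$, which makes every internal SOC tight and keeps all variables non-negative. Telescoping upward gives $z^1_1 z^1_2=\bigl(\prod_j z^{k-1}_j\bigr)^{1/2^{k-2}}=\bigl(u^a t^{b/2-a}v^{2^{k-1}-b/2}\bigr)^{1/2^{k-2}}$, and the hypothesis $v^b\le u^{2a}t^{b-2a}$ is equivalent to $v^{2^{k-1}}\le u^a t^{b/2-a}v^{2^{k-1}-b/2}$, which on taking the $2^{k-2}$-th root says precisely $v^2\le z^1_1 z^1_2$, so the root SOC is also satisfied.

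Before executing the two directions, I would briefly check the bookkeeping preconditions that justify the group sizes in~\eqref{eq:SOC-A-1P}: $\lfloor b/2\rfloor\ge a$ follows from $p>2$, i.e., $b>2a$; and $\lceil b/2\rceil\le 2^{k-1}$ follows from $k=\lceil\log_2(b+1)\rceil$, which forces $b\le 2^k-1$. The group cardinalities $a$, $\lfloor b/2\rfloor-a$, $\lceil b/2\rceil-\lfloor b/2\rfloor$, and $2^{k-1}-\lceil b/2\rceil$ then sum to $2^{k-1}$, so the tiling of leaves is consistent. The main obstacle is not conceptual but combinatorial: matching the integer exponents $2a$, $b-2a$, $b$ of the target polynomial inequality against the leaf counts in the four groups, and handling both parities of $b$ uniformly via the single $\sqrt{tv}$ middle leaf that absorbs a half-integer power. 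Once this accounting is laid out, the rest is just the straightforward product of $2^{k-1}$ non-negative inequalities.
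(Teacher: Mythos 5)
Your proof is correct and takes essentially the same approach as the paper's: the paper substitutes the map $f_p(u,v,t)=(\underbrace{u,\dots,u}_{2a},\underbrace{t,\dots,t}_{b-2a},\underbrace{v,\dots,v}_{2^k-b},v)$ into Lemma~\ref{lma:P-SOC}, verifies that the resulting SOC system is exactly~\eqref{eq:SOC-A-1P}, and telescopes $v\le(\prod_j z^1_j)^{1/2}\le\dots\le(u^{2a}t^{b-2a}v^{2^k-b})^{2^{-k}}$ for the converse. Your writeup unpacks the same binary-tree/geometric-mean telescoping explicitly — in particular spelling out the parity bookkeeping that the paper dismisses as ``straightforward to verify,'' and constructing the internal $\bz^i$'s by taking geometric means, which is precisely the construction implicit in Lemma~\ref{lma:P-SOC} — but the underlying argument is identical.
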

\begin{proof}
Consider a linear mapping $f_{p}:\R^3\rightarrow\R^{2^k+1}$ where
\begin{align*}
f_p(u,v,t) =(\underbrace{ u, u,\dots, u}_{2a}, \underbrace{t,t, \dots, t}_{b-2a}, \underbrace{v,  v, \dots,  v}_{2^k-b}, v).
\end{align*}
Noticing that $2a>0$, $b-2a>0$ as $p\in(2,\infty)$ and $2^k-b>0$, it can be calculated that
$$ v^{p/(p-2)}\le t u^{2/(p-2)}  \Longleftrightarrow v\le (u^{2a}t^{b-2a}v^{2^k-b})^{2^{-k}}  \text{ when } u,v,t\ge0.$$

If $(u,v,t)\in\BK^3_p$, then $f_p(u,v,t)\in \{\bx \in \R^{2^k}_+,\, y\in\R_+ : 
y \le(\prod_{i=1}^{2^k}x_i)^{2^{-k}}\}$. It is straightforward to verify that~\eqref{eq:SOCA1P} becomes exactly~\eqref{eq:SOC-A-1P} for $f_p(u,v,t)$. By applying Lemma~\ref{lma:P-SOC}, there must exist $\bz^i\in\R^{2^i}_+$ for $i=1,2,\dots,k-1$ such that $(u,v,t,\bz^1,\bz^2,\dots,\bz^{k-1}) \text{ satisfies~\eqref{eq:SOC-A-1P}}$.

On the other hand, if $(u,v,t)$ belongs to the \li{right hand side} of~\eqref{eq:J3p}, then $(u,v,t,\bz^1,\bz^2,\dots,\bz^{k-1})$ satisfies~\eqref{eq:SOC-A-1P}. Noticing that $\|(w, \frac{1}{2}(x-y))\|_2\le\frac{1}{2}(x+y)$ is equivalent to $w\le\sqrt{xy}$ when $w,x,y\ge0$, it is easy to verify recursively from~\eqref{eq:SOC-A-1P} that
$$
v\le \left(\prod_{j=1}^2z^1_j\right)^{2^{-1}} \le \left(\prod_{j=1}^4z^2_j\right)^{2^{-2}} \le \dots \le \left(\prod_{j=1}^{2^{k-1}}z^{k-1}_{j}\right)^{2^{-(k-1)}}\le (u^{2a}t^{b-2a}v^{2^k-b})^{2^{-k}}.
$$
Therefore, $v^{p/(p-2)}\le t u^{2/(p-2)}$, implying that $(u,v,t)\in\BK^3_p$.
\end{proof}

With the SOC representation of $\BK^3_p$ in Proposition~\ref{prop:SOC-A-1P} in an explicit way, we provide semidefinite optimization models for both $\|A\|_{p_v}$ and $\|A\|_{p_u}$.
\begin{corollary}\label{lma:dual-SDP}
For any matrix $A\in\R^{m\times n}$ and constant $p\in\BQ\cap(2,\infty)$,
    \begin{equation}\label{cor:final-spectral-p-norm}
    \begin{array}{lll}
    \|A\|_{p_v}= &\min & u_1+ u_2+\theta_p\sum_{i=1}^{m+n} t_i    \\
    &\st   & (v_i, u_1,t_i)\in\BK^3_p\quad i=1,2,\dots, m \\
    && (v_i, u_2,t_i)\in\BK^3_p\quad i=m+1,m+2,\dots, m+n\\
    && 
    \Diag(\bv)\succeq \begin{pmatrix} O & A/2 \\A^\T/2 & O\end{pmatrix}
    \end{array}
    \end{equation}
and
\begin{equation}\label{cor:final-nuclear-p-norm}
    \begin{array}{lll}
    \|A\|_{p_u}= &\max & \langle A,Z\rangle    \\
    &\st   &u_1+ u_2+\theta_p\sum_{i=1}^{m+n} t_i\le 1\\
    && (v_i, u_1,t_i)\in\BK^3_p\quad i=1,2,\dots, m \\
    && (v_i, u_2,t_i)\in\BK^3_p\quad i=m+1,m+2,\dots, m+n\\ 
    && 
    \Diag(\bv)\succeq \begin{pmatrix} O & Z/2 \\Z^\T/2 & O\end{pmatrix},
    \end{array}
    \end{equation}
where $\BK^3_p$ is defined in~\eqref{eq:J3p}.
\end{corollary}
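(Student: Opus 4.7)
The statement packages two equalities, one for $\|A\|_{p_v}$ and one for $\|A\|_{p_u}$, and both are obtained by a single substitution. My plan is to start from the explicit formulations already derived in Proposition~\ref{lma:dual-vecp} and Corollary~\ref{cor:pnorm-simpler} and replace each nonlinear power inequality by its SOC representation supplied by Proposition~\ref{prop:SOC-A-1P}. The whole argument is mechanical once these three earlier results are combined, and the only genuine item requiring verification is the implicit sign conditions needed to invoke the $\BK^3_p$ membership.

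For the first equality, I would take the minimization problem~\eqref{opt:dual-vecp-finite} given by Proposition~\ref{lma:dual-vecp}. Each constraint there has the form ${v_i}^{p/(p-2)}\le t_i u_j^{2/(p-2)}$ with $u_j\ge 0$ and $t_i\ge 0$ already present. To invoke the equivalence in Proposition~\ref{prop:SOC-A-1P} I also need $v_i\ge 0$. This follows automatically because the semidefinite constraint $\Diag(\bv)\succeq \begin{pmatrix} O & A/2 \\ A^\T/2 & O \end{pmatrix}$ forces the diagonal of $\Diag(\bv)-B$ to be nonnegative, hence $v_i\ge 0$. With all three sign conditions in place, Proposition~\ref{prop:SOC-A-1P} yields that ${v_i}^{p/(p-2)}\le t_i u_j^{2/(p-2)}$ is equivalent to $(v_i, u_j, t_i)\in\BK^3_p$. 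Substituting this equivalence into~\eqref{opt:dual-vecp-finite} (treating the three nonnegativity inequalities as absorbed into $\BK^3_p$) produces exactly~\eqref{cor:final-spectral-p-norm}.

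For the second equality, the argument is the same. Starting from the explicit formulation~\eqref{eq:matrix-nuclear-pnorm-vecp-dual} in Corollary~\ref{cor:pnorm-simpler}, the constraint $\Diag(\bv)\succeq \begin{pmatrix} O & Z/2 \\ Z^\T/2 & O \end{pmatrix}$ again enforces $v_i\ge 0$, and $u_j, t_i\ge 0$ are stated explicitly. Applying Proposition~\ref{prop:SOC-A-1P} coordinate-wise replaces the power inequalities by $(v_i, u_j, t_i)\in\BK^3_p$, yielding~\eqref{cor:final-nuclear-p-norm}. Note that in both formulations the auxiliary variables $\bz^1, \dots, \bz^{k-1}$ hidden inside $\BK^3_p$ can always be adjoined without changing the optimal value, since Proposition~\ref{prop:SOC-A-1P} is an exact projection statement rather than a relaxation.

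I do not anticipate a substantive obstacle. The closest thing to a pitfall is overlooking that $v_i\ge 0$ is not written explicitly in the feasible set but is implied by the semidefinite constraint with a hollow right-hand-side matrix; without this observation one cannot appeal to Proposition~\ref{prop:SOC-A-1P}, whose SOC description presupposes nonnegativity. Once this point is recorded, the corollary follows immediately from Proposition~\ref{lma:dual-vecp}, Corollary~\ref{cor:pnorm-simpler} and Proposition~\ref{prop:SOC-A-1P}.
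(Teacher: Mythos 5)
Your proof is correct and follows the same route the paper intends: the corollary is stated without proof precisely because it is the direct substitution of the SOC representation in Proposition~\ref{prop:SOC-A-1P} into the formulations of Proposition~\ref{lma:dual-vecp} and Corollary~\ref{cor:pnorm-simpler}. Your observation that $v_i\ge 0$ is enforced by the positive semidefinite constraint with a hollow-diagonal right-hand side is exactly the detail that makes the $\BK^3_p$ membership well-posed, and it is the one non-trivial point to record.
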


As the given constant $\frac{1}{p}=\frac{a}{b}$ has $a,b\in\BN$ mutually prime and $2a<b$, the number of variables and the number of constraints describing $\BK^3_p$ in~\eqref{eq:J3p} are $O(b)$ and $O(b\ln b)$, respectively. The number of variables in~\eqref{cor:final-nuclear-p-norm} is $O(bm+bn+mn)$ while there are $O((m+n)b\ln b)$ linear or simple SOC constraints together with one positive semidefinite constraint. Anyway,~\eqref{cor:final-nuclear-p-norm} can be solved to arbitrary accuracy in polynomial time using, e.g., an interior-point method. In fact,~\eqref{cor:final-spectral-p-norm} provides a much better formulation of $\|\cdot\|_{p_v}$ than~\eqref{opt:vecp-original} \li{which needs to be solved by the impractical ellipsoid method}. By combining Lemma~\ref{lma:vecp}, Lemma~\ref{prop:matrix-equi-pq-vecp} and Corollary~\ref{lma:dual-SDP}, we conclude this section with the following important result in matrix theory.
\begin{theorem}\label{thm:KG-matrix-nuclear-pnorm}
For any matrix $A\in\R^{m\times n}$ and constant $p \in\BQ\cap(2, \infty)$, $\|A\|_{p_u}$ can be obtained by the optimal value of~\eqref{cor:final-nuclear-p-norm} such that
$$
\|A\|_{p_u}\le \|A\|_{p_*}\le {\delta_G}\|A\|_{p_u}.
$$
Moreover, any optimal solution $Z\in\R^{m\times n}$ of~\eqref{cor:final-nuclear-p-norm} is an approximate dual certificate of $\|A\|_{p_*}$ in the sense that
$$
\|Z\|_{p_\sigma}\le 1 \text{ and } \langle A,Z\rangle\ge \|A\|_{p_*}/\delta_G.
$$
\end{theorem}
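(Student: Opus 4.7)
The plan is to assemble the theorem from the pieces already established in Sections~\ref{sec:kG-matrix-nuclear-p-norm}--\ref{sec:SDP-relax-nuclear-p-norm} rather than building anything new. The first assertion, that the optimal value of~\eqref{cor:final-nuclear-p-norm} equals $\|A\|_{p_u}$ and that $\|A\|_{p_u}\le\|A\|_{p_*}\le\delta_G\|A\|_{p_u}$, is essentially a concatenation of Corollary~\ref{lma:dual-SDP} with Lemma~\ref{prop:matrix-equi-pq-vecp}. So the first step is simply to cite Corollary~\ref{lma:dual-SDP} for the equality and Lemma~\ref{prop:matrix-equi-pq-vecp} for the sandwich inequality.

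The second assertion requires a short additional argument that unpacks the meaning of feasibility in~\eqref{cor:final-nuclear-p-norm}. The idea is: if $Z$ is optimal for~\eqref{cor:final-nuclear-p-norm}, then the accompanying tuple $(u_1,u_2,\bt,\bv)$ witnesses feasibility of the dual program~\eqref{opt:dual-vecp-finite} for the matrix $Z$ (not $A$) with objective value at most $1$. By Proposition~\ref{lma:dual-vecp}, that feasibility together with the objective bound says exactly that $\|Z\|_{p_v}\le 1$. Apply Lemma~\ref{lma:vecp} to the matrix $Z$ to conclude $\|Z\|_{p_\sigma}\le\|Z\|_{p_v}\le 1$, which is the first part of the certificate property. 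For the inequality $\langle A,Z\rangle\ge\|A\|_{p_*}/\delta_G$, combine the identification $\langle A,Z\rangle=\|A\|_{p_u}$ (since $Z$ is optimal) with the lower bound $\|A\|_{p_u}\ge\|A\|_{p_*}/\delta_G$ already proved in the first part.

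I do not expect a genuine obstacle; the only bookkeeping I need to be careful about is the transfer from~\eqref{cor:final-nuclear-p-norm} back to the dual formulation of $\|Z\|_{p_v}$. Specifically, the constraint $(v_i,u_1,t_i)\in\BK^3_p$ (respectively, with $u_2$) is, by Proposition~\ref{prop:SOC-A-1P}, equivalent to $v_i^{p/(p-2)}\le t_i u_1^{2/(p-2)}$ (respectively with $u_2$), together with the block semidefinite constraint on $\Diag(\bv)$ and the normalization $u_1+u_2+\theta_p\sum_i t_i\le 1$. That is precisely the feasibility system of~\eqref{opt:dual-vecp-finite} applied to $Z$ with an objective value $\le 1$, so weak duality for~\eqref{opt:vecp-original}--\eqref{opt:dual-vecp-finite} (or equivalently the strong duality already established in Proposition~\ref{lma:dual-vecp}) yields $\|Z\|_{p_v}\le 1$. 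With this observation in place the whole proof is a one-paragraph chain of citations.
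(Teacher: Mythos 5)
Your proposal is correct and follows essentially the same route as the paper: the sandwich inequality comes from Lemma~\ref{prop:matrix-equi-pq-vecp} plus Corollary~\ref{lma:dual-SDP}, and the certificate property comes from observing that the feasibility system of~\eqref{cor:final-nuclear-p-norm} forces $\|Z\|_{p_v}\le 1$ (the paper cites~\eqref{cor:final-spectral-p-norm} directly; you route through~\eqref{opt:dual-vecp-finite} and Proposition~\ref{prop:SOC-A-1P}, which is the same thing), after which Lemma~\ref{lma:vecp} gives $\|Z\|_{p_\sigma}\le 1$ and optimality gives $\langle A,Z\rangle=\|A\|_{p_u}\ge\|A\|_{p_*}/\delta_G$. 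The only item you gloss over, which the paper handles in one line, is that an optimal $Z$ actually exists (it does, because~\eqref{opt:vecp-original} is bounded and strictly feasible, so strong duality holds with attainment).
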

\begin{proof}
The first statement is an immediate consequence of Lemma~\ref{prop:matrix-equi-pq-vecp} and the second part of Corollary~\ref{lma:dual-SDP}. For the second statement, an optimal solution of~\eqref{cor:final-nuclear-p-norm} exists since the primal problem~\eqref{opt:vecp-original} is bounded and strictly feasible, mentioned at the end of the proof of Proposition~\ref{lma:dual-vecp}. According to~\eqref{cor:final-spectral-p-norm}, any optimal $Z$ of~\eqref{cor:final-nuclear-p-norm} satisfies $\|Z\|_{p_v}\le1$. Therefore, $\|Z\|_{p_\sigma}\le\|Z\|_{p_v}\le1$ by Lemma~\ref{lma:vecp} and $ \langle A,Z\rangle=\|A\|_{p_u}\ge \|A\|_{p_*}/\delta_G$.
\end{proof}

\section{$\ell_p$-sphere covering}\label{sec:hitting-sets}

This section is devoted to explicit constructions of hitting sets to approximately cover $\BS^n_p$ in $\R^n$. In particular, we focus on hitting sets that are aimed to maximize the hitting ratio while keeping the cardinality bounded by a polynomial function of $n$, called polynomial cardinality. This is important to the approximation schemes of the tensor nuclear $p$-norm to be run in polynomial time in Section~\ref{sec:algorithms}. Some other hitting sets constructed as byproducts have independent interest. \li{All these hitting sets are beneficial to applications in decision making on $\ell_p$-spheres.}

When $p=2$, the Euclidean sphere covering is well studied in computational geometry since the pioneering work of Rogers~\cite{R58}; see~\cite{he2023approx} and \li{the} references therein. In particular, several hitting sets with the hitting ratio $\Omega(\sqrt{\ln{n}/{n}})$ were explicitly constructed in~\cite{he2023approx}. However, little was known for $\ell_p$-sphere. This section can be taken as the $\ell_p$-generalization of the work in~\cite{he2023approx}. Unlike the Euclidean sphere which is self-dual, $\ell_p$-sphere makes the study more difficult and the covering results are slightly worse than that of the Euclidean sphere. We are left with an open problem to explicitly construct a deterministic hitting set with hitting ratio $\Omega(\sqrt{\ln{n}/{n}})$ while its existence can be shown; see Section~\ref{sec:rand-hitting-set}. Most results developed in this section apply for $p\in[2,\infty)$ and some apply for $p\in(1,2)$ as well. For ease of reference, we summarize the specifications of our main constructions below in Table~\ref{tab:hitting-sets}.
\begin{table}[!ht]
\centering
\caption{Hitting sets of $\BS_p^n$ with polynomial cardinality 
}
\label{tab:hitting-sets}
\begin{tabular}{|lc|ccc|c|}
\hline
Hitting sets  & $p$ & Hitting ratio  & Cardinality   & Type          & Reference                             \\ \hline
$\BH_{1}^n(\alpha,\beta)$  & $(1,\infty)$                                      & $\mu_{\alpha,\beta}\sqrt[\leftroot{-2}\uproot{2}q]{\frac{\ln{n}}{n+\ln{n}}}$                                       & $n^{\ln{\nu_{\alpha,\beta}}}\left(\frac{\nu_{\alpha,\beta}n}{\ln n}+1\right)$                               & Deterministic & Corollary~\ref{cor:lnn1q}       \\
$\BH_{2}^n(\alpha,\beta)$ & $[2,\infty)$& $\frac{\mu_{\alpha,\beta}\sqrt[\leftroot{-2}\uproot{2}p]{\ln n}}{\sqrt{2n}}$ & $\frac{\nu_{\alpha,\beta}n^{2\ln{\nu_{\alpha,\beta}}+1}}{\ln{n}}$                                   & Deterministic & Corollary~\ref{thm:h2} \\
$\BH_{3}^n(\epsilon)$          & $[2,\infty)$                                          & $\sqrt{\frac{\delta_0 \ln{n}}{2n}}$ & $\limath{\delta_3n^{\delta_2}\left( \left(\frac{1}{2}+\frac{1}{q}\right)n \ln n+\ln \frac{1}{\epsilon}\right)}$ & Randomized & Theorem~\ref{thm:rand-hitting-set}    \\ \hline
\end{tabular}
\end{table}

\subsection{$\Omega(\sqrt[q]{\ln n/n})$-hitting sets of $\BS_p^n$}\label{sec:worst-hitting} 

We start with the best hitting set of $\BS_2^n$ recently proposed by He et al.~\cite{he2023approx}. In particular, our first construction generalizes the $\Omega(\sqrt{\ln n/n})$-hitting set~\cite[Theorem 2.14]{he2023approx} of $\BS_2^n$ to $\Omega(\sqrt[q]{\ln n/n})$-hitting sets of $\BS_p^n$ when $p\in(1,\infty)$. They are the only hitting sets covering the range of $p\in (1,2)$ in the paper.

An important preparation in the construction is an $\Omega(1)$-hitting set of $\BS_p^n$ although its cardinality has to be exponential in $n$. To this end, we adapt the construction in~\cite[Algorithm 2.1]{he2023approx} for $\BS^n_2$ to a one for $\BS_p^n$, i.e., the hitting set $\BH_{H}^n(\alpha, \beta)$ shown in Algorithm~\ref{alg:alg1}.

\begin{algorithm}[!h]\caption{$\BH_{H}^n(\alpha, \beta)$: $\Omega(1)$-hitting sets of $\BS_p^n$ with $p\in(1,\infty)$}
\begin{algorithmic}[1]
\REQUIRE A dimension $n\in\BN$, a constant $p\in(1,\infty)$ and two parameters $\alpha\ge 1$ and $\beta\ge\alpha+1$.
\ENSURE An $\Omega(1)$-hitting set of $\BS_p^n$.
\STATE Let $m=\left\lceil\log_\beta \alpha n \right\rceil$ and partition $\BI:=\{1,2,\dots,n\}$ into 
$\{\BI_1,\BI_2,\dots,\BI_m\}$ such that
\begin{equation}\label{eq:partition}
\BI=\bigcup_{j=1}^m\BI_j,\,\BI_i\bigcap\BI_j=\emptyset \text{ for } i\neq j,\,|\BI_1|=n-\sum_{j=2}^m|\BI_j| \text{ and } |\BI_j|=\left\lfloor \frac{\alpha n}{\beta^{j-1}} \right\rfloor
\text{ for } j=2,3\dots, m.
\end{equation}
\STATE Generate a set of vectors
$$\BX^n=\bigcup_{\{\BI_1,\BI_2,\dots,\BI_m\}\text{ satisfies~\eqref{eq:partition}}}\left\{\bz\in\R^n: z_i\in\left\{\pm1,\pm \beta^{\frac{j-1}{p}}\right\} \text{ if } i\in\BI_j \text{ for } j=1,2,\dots,m \right\}.$$
\RETURN $\BH_{H}^n(\alpha, \beta):=\{\bz/\|\bz\|_p\in\BS_p^n: \bz \in \BX^n\}$.
\end{algorithmic}\label{alg:alg1}
\end{algorithm}

Algorithm~\ref{alg:alg1} definitely generates a nonempty hitting set since there exists some $\{\BI_1,\BI_2,\dots,\BI_m\}$ satisfying~\eqref{eq:partition}. In particular, 
$$
\sum_{j=2}^m |\BI_j| \le \sum_{j=2}^m \frac{\alpha n}{\beta^{j-1}} =\frac{\alpha n}{\beta-1}-\frac{\alpha n}{\beta^{m-1}(\beta-1)}\le\frac{\alpha n}{\beta-1}\le n,
$$
implying that $|\BI_1|=n-\sum_{j=2}^m |\BI_j|\ge0$. We now show that $\BH_{H}^n(\alpha, \beta)$ is an $\Omega(1)$-hitting set of $\BS_p^n$ \li{by exploiting the distinct geometry of the $\ell_p$-sphere.}
\begin{proposition}\label{thm:HE}
\li{If $\alpha\ge 1$ and $\beta\ge\alpha+1$, then}
$$
\BH_{H}^n(\alpha, \beta)\in\allowbreak\BT_p^n\left(\mu_{\alpha,\beta},{\nu_{\alpha,\beta}}^n\right),
$$
where
$$\mu_{\alpha,\beta}:=\sqrt[\leftroot{-2}\uproot{2}p]{\frac{\alpha}{\beta(\alpha+1)}}\left(1-\frac{1}{\alpha}\right) \text{ and } \nu_{\alpha,\beta}:=2^{\frac{\beta+\alpha-1}{\beta-1}} \alpha^{-\frac{\alpha}{\beta-1}}\beta^{\frac{\alpha\beta}{(\beta-1)^2}}\left(\frac{\beta-1}{\beta-\alpha-1}\right)^{\frac{\beta-\alpha-1}{\beta-1}}+o(1).$$
\end{proposition}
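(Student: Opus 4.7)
The proof naturally splits into a cardinality bound and a hitting-ratio bound, which I would handle separately.

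\textbf{Cardinality.} Every vector in $\BX^n$ is determined by (i) an ordered partition $(\BI_1,\dots,\BI_m)$ of $\{1,\dots,n\}$ with sizes prescribed by~\eqref{eq:partition}, and (ii) for each coordinate $i\in\BI_j$, a value in $\{\pm 1,\pm\beta^{(j-1)/p}\}$ --- two choices for $j=1$ (since $\beta^0=1$ collapses the set to $\{\pm 1\}$) and four choices for $j\ge 2$. Consequently,
$$|\BH_H^n(\alpha,\beta)|\le\binom{n}{|\BI_1|,\dots,|\BI_m|}\cdot 2^{|\BI_1|}\cdot 4^{n-|\BI_1|}.$$
Taking logarithms and applying Stirling's formula reduces the analysis to evaluating the entropy $n^{-1}\sum_j |\BI_j|\log(n/|\BI_j|)$ plus the sign/scale contribution $(|\BI_1|/n)\log 2+((n-|\BI_1|)/n)\log 4$. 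Using $|\BI_j|/n\to\alpha/\beta^{j-1}$ for $j\ge 2$ and $|\BI_1|/n\to(\beta-1-\alpha)/(\beta-1)$, together with the geometric identities $\sum_{j\ge 2}\alpha/\beta^{j-1}=\alpha/(\beta-1)$ and $\sum_{j\ge 2}(j-1)\alpha/\beta^{j-1}=\alpha\beta/(\beta-1)^2$, a direct algebraic computation matches the explicit formula for $\nu_{\alpha,\beta}$; the $o(1)$ term absorbs the Stirling polynomial correction and the finite-$m$ truncation $\sum_{j=2}^m$ versus $\sum_{j\ge 2}$.

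\textbf{Hitting ratio.} Given $\bx\in\BS_q^n$, one may absorb signs into $\bz$ and permute coordinates so that $x_1\ge x_2\ge\cdots\ge x_n\ge 0$. The strategy is to select an ordered partition and values $z_i\in\{1,\beta^{(j-1)/p}\}$ adaptively based on the magnitudes of $x_i$: the $|\BI_m|$ largest coordinates go into $\BI_m$ and receive the top scale $\beta^{(m-1)/p}$, the next $|\BI_{m-1}|$ coordinates go into $\BI_{m-1}$ with scale $\beta^{(m-2)/p}$, and so on down to $\BI_1$ with scale $1$. Writing $\bv=\bz/\|\bz\|_p\in\BH_H^n(\alpha,\beta)$, the inner product decomposes as
$$\bx^{\T}\bv=\frac{1}{\|\bz\|_p}\sum_{j=1}^m\beta^{(j-1)/p}\sum_{i\in\BI_j}x_i, \qquad \|\bz\|_p^p=\sum_{j=1}^m\beta^{j-1}|\BI_j|\approx\alpha n\cdot m+|\BI_1|.$$
Lower-bounding the numerator using H\"older's inequality within each group and then minimizing over $\|\bx\|_q=1$ produces the factor $\sqrt[p]{\alpha/(\beta(\alpha+1))}$ asymptotically (roughly: the extremal $\bx$ spreads its mass across the higher-scale groups), while the additional $(1-1/\alpha)$ correction comes from the floor $|\BI_j|=\lfloor\alpha n/\beta^{j-1}\rfloor$ and the boundary group $\BI_1$.

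\textbf{Main obstacle.} The principal difficulty is identifying and bounding the worst-case $\bx$ in the hitting-ratio calculation. Unlike Cauchy--Schwarz, H\"older's inequality is tight only for very specific pairs of vectors, so one cannot simply take the adversary to be $\bx$ parallel to some $\bz\in\BX^n$. Instead, one must argue that the sorted assignment, together with an appropriate choice of the values $z_i\in\{\pm 1,\pm\beta^{(j-1)/p}\}$, dominates every $\bx\in\BS_q^n$, which requires carefully tracking how $\|\bx\|_q=1$ distributes across the geometric progression of group sizes $|\BI_j|\approx\alpha n/\beta^{j-1}$ and scales $\beta^{(j-1)/p}$. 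By contrast, the cardinality bound is a routine Stirling estimate combined with geometric-series bookkeeping.
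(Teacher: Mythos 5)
Your cardinality argument is directionally right (the paper itself just cites~\cite[Proposition~2.9]{he2023approx} for this), but your hitting-ratio argument has a genuine flaw that no amount of refinement of the ``worst-case $\bx$'' minimization can repair, because the denominator in your bound is already too large. You assign $z_i=\beta^{(j-1)/p}$ to \emph{every} index $i\in\BI_j$, so
$$\|\bz\|_p^p=\sum_{j=1}^m\beta^{j-1}|\BI_j|\approx \alpha n(m-1)+|\BI_1|=\Theta(n\log n),$$
since $m=\lceil\log_\beta(\alpha n)\rceil=\Theta(\log n)$. But by H\"older's inequality $\bz^\T\bx\le\|\bz\|_p\|\bx\|_q=\|\bz\|_p$ for any construction, so the best possible numerator is $O(n^{1/p})$-sized, and your ratio $\bz^\T\bx/\|\bz\|_p$ can only be $O\bigl((\log n)^{-1/p}\bigr)$, not the constant $\mu_{\alpha,\beta}$. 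The ``entropy accounting'' of how $\|\bx\|_q=1$ spreads across groups cannot save this; the problem is that you are paying the full cost $\beta^{j-1}$ in $\|\bz\|_p^p$ for every index in $\BI_j$ even when $|x_i|$ is tiny.

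The essential idea you are missing is that the construction in Algorithm~\ref{alg:alg1} permits $z_i\in\{\pm 1,\pm\beta^{(j-1)/p}\}$ for $i\in\BI_j$, i.e., you may \emph{decline} the scale factor. The paper exploits this by bucketing indices according to magnitude thresholds rather than by sorted rank: it sets $\BD_j(\bx)=\{i:\sqrt[q]{\beta^{j-1}/(\alpha n)}<|x_i|\le\sqrt[q]{\beta^j/(\alpha n)}\}$, which by $\|\bx\|_q=1$ satisfies $|\BD_j(\bx)|<\alpha n/\beta^{j-1}$ for $j\ge 2$, so one may choose a legal partition with $\BD_j(\bx)\subseteq\BI_j$. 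Crucially, the scaled value $z_i=\sign(x_i)\beta^{(j-1)/p}$ is assigned \emph{only} to $i\in\BD_j(\bx)$; every remaining index --- including those in $\BI_j\setminus\BD_j(\bx)$ for $j\ge 2$, where $|x_i|$ is small --- receives $z_i=\sign(x_i)$ of unit magnitude. With this selective scaling, $\sum_{j\ge 1}\sum_{i\in\BD_j(\bx)}|z_i|^p<\alpha n$ (each term $\beta^{j-1}<\alpha n|x_i|^q$) and $\sum_{i\in\BD_0(\bx)}|z_i|^p\le n$, giving $\|\bz\|_p^p\le(\alpha+1)n$ rather than $\Theta(n\log n)$. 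Paired with the numerator bound $\bz^\T\bx\ge\sqrt[p]{\alpha n/\beta}\,(1-1/\alpha)$ (which follows from $|x_i|\le\sqrt[q]{\beta^j/(\alpha n)}$ for $i\in\BD_j(\bx)$ and the fact that $\BD_0(\bx)$ carries $\ell_q$-mass at most $1/\alpha$), this yields exactly $\mu_{\alpha,\beta}$. Your sorted-assignment scheme would have to be abandoned in favor of this magnitude-threshold scheme and, more importantly, you must allow $z_i=\pm 1$ on the small-magnitude indices of every block.
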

\begin{proof}
    The upper bound ${\nu_{\alpha,\beta}}^n$ of the cardinality was proved exactly in~\cite[Proposition~2.9]{he2023approx}. We only estimate the lower bound, $\mu_{\alpha,\beta}$, for the hitting ratio.
    
    For any given $\bx\in\BS_q^n$, denote the index sets
    $$
    \BD_0(\bx)=\left\{i \in \BI:\left|x_i\right| \le \frac{1}{\sqrt[\leftroot{-2}\uproot{2}q]{\alpha n}}\right\}
    \text { and }
    \BD_j(\bx)=\left\{i \in \BI: \sqrt[\leftroot{-2}\uproot{2}q]{\frac{\beta^{j-1}}{\alpha n}}<\left|x_i\right| \le \sqrt[\leftroot{-2}\uproot{2}q]{\frac{\beta^j}{\alpha n}}\right\} \text{ for } j=1,2,\dots,m.
    $$
    Notice that $|x_i|\le 1\le \sqrt[q]{\beta^m/(\alpha n)}$ and so $\{\BD_0(\bx),\BD_1(\bx),\dots,\BD_m(\bx)\}$ is a partition of $\BI$. Besides, it is obvious that for $j\ge2$,
$$
\frac{\beta^{j-1}}{\alpha n}  |\BD_j(\bx)|= \sum_{i \in \BD_j(\bx)}\frac{\beta^{j-1}}{\alpha n} < \sum_{i \in \BD_j(\bx)}|x_i|^q \le 1.
$$
This implies that $|\BD_j(\bx)| < \alpha n/\beta^{j-1}$, i.e., $|\BD_j(\bx)| \le\lfloor\alpha n/\beta^{j-1}\rfloor$ for $j=2,3, \dots, m$. Hence, there exists a partition $\{\BI_1,\BI_2,\dots,\BI_m\}$ of $\BI$ satisfying~\eqref{eq:partition} such that $\BD_j(\bx)\subseteq \BI_j$ for $j=2,3, \dots, m$. Furthermore, we may find a vector $\bz\in\BX^n$ such that
    $$
    z_i= \begin{cases}\sign(x_i) & i \in \BD_0(\bx)\cup \BD_1(\bx) \\ \sign(x_i) \beta^{\frac{j-1}{p}} & i \in \BD_j(\bx) \text { for } j=2,3,\dots, m,\end{cases}
    $$
    where the $\sign$ function takes $1$ for nonnegative reals and $-1$ for negative reals.
    
    We can now estimate $\bz^{\T}\bx$ and $\|\bz\|_p$. First of all, we have
    $$
    \sum_{i \in \BD_0(x)} |x_i|^q \le \sum_{i \in \BD_0(x)} \frac{1}{\alpha n} \le \frac{1}{\alpha} \text{ and }\sum_{j=1}^m \sum_{i \in \BD_j(x)} |x_i|^q = \sum_{i\in\BI} |x_i|^q -  \sum_{i \in \BD_0(\bx)} |x_i|^q \ge 1-\frac{1}{\alpha}.
    $$
    Next, we have
    $$
    \sum_{i \in \BD_0(\bx)} |z_i|^p=\left|\BD_0(\bx)\right| \le n \text{ and }\sum_{j=1}^m \sum_{i \in \BD_j(\bx)} |z_i|^p=\sum_{k=1}^m \sum_{i \in \BD_j(\bx)} \beta^{j-1}<\sum_{k=1}^m \sum_{i \in \BD_j(\bx)} \alpha n |x_i|^q \le \alpha n,
    $$
    implying that $\|\bz\|_p^p \le(\alpha+1) n$ by summing up the two inequalities. Lastly, noticing that $\sign(z_i)=\sign(x_i)$ for every $i$, we have
    $$
    \bz^{\T} \bx \ge \sum_{j=1}^m \sum_{i \in \BD_j(\bx)} \beta^{\frac{j-1}{p}}\left|x_i\right| \ge \sum_{j=1}^m \sum_{i \in \BD_j(\bx)} \sqrt[\leftroot{-2}\uproot{2}p]{\frac{\alpha n}{\beta}}\left|x_i\right|^q \ge \sqrt[\leftroot{-2}\uproot{2}p]{\frac{\alpha n}{\beta}}\left(1-\frac{1}{\alpha}\right),
    $$
    where the second inequality follows from $\left|x_i\right| \le \sqrt[q]{\beta^j/(\alpha n)}$ in defining $\BD_j(\bx)$. Therefore, we conclude that the vector $\bz/\|\bz\|_p\in\BH_{H}^n(\alpha, \beta)$ satisfies
    $$
    \bx^{\T} \frac{\bz}{\|\bz\|_p} \ge \sqrt[\leftroot{-2}\uproot{2}p]{\frac{\alpha n}{\beta}}\left(1-\frac{1}{\alpha}\right) \frac{1}{\sqrt[\leftroot{-2}\uproot{2}p]{(\alpha+1)n}} = 
    \sqrt[\leftroot{-2}\uproot{2}p]{\frac{\alpha}{\beta(\alpha+1)}}\left(1-\frac{1}{\alpha}\right),
    $$
    proving the desired lower bound $\mu_{\alpha,\beta}$.
\end{proof}


We remark that Proposition~\ref{thm:HE} actually improves an existing $\Omega(1)$-hitting set due to Brieden et al.~\cite{brieden2001deterministic}, at least from numerical examples to be shown later. In particular, by letting $
\BY^n(\gamma)=\{\bz\in\BZ^n: 0<\|\bz\|_p\le \gamma n^{1/p}\}$ with a parameter $\gamma>1$, the hitting set 
$$\BH_B^n(\gamma):=\left\{\frac{\bz}{\|\bz\|_p}\in\BS_p^n: \bz \in \BY^n(\gamma)\right\}$$
that was implicitly given in the proof of~\cite[Lemma~3.7]{brieden2001deterministic} satisfies that
\begin{equation}\label{eq:bupper}
\BH_B^n(\gamma)\in\BT_p^n\left(1-\frac{1}{\gamma}, \left(e^{\frac{p}{12}}(2\gamma+1)\max\left\{1,\sqrt{\frac{2\pi}{p}}\right\}\right)^n\right).
\end{equation}


$\BH_B^n(\gamma)$ is essentially based on grid sampling in an $\ell_p$-ball while $\BH_{H}^n(\alpha,\beta)$ provides a more refined selection that has an obvious advantage in terms of the cardinality if both attain the same hitting ratio. As an example, let us choose $p=6$ which is the global minimizer of the above \li{bound}~\eqref{eq:bupper} for $|\BH_B^n(\gamma)|$ and let $\beta=\alpha+1$ for $\BH_{H}^n(\alpha,\beta)$ which maximizes $\mu_{\alpha,\beta}$ for fixed $\alpha$. By setting the hitting ratio $1-\frac{1}{\gamma}=(1-\frac{1}{\alpha})\sqrt[6]{\frac{\alpha}{(\alpha+1)^2}}$, we have the following upper bounds of cardinalities
$$
|\BH_B^n(\gamma)|^{\frac{1}{n}} \le 
\sqrt{\frac{e \pi}{3}}\left(\frac{2}{1-\left(1-\frac{1}{\alpha}\right)\sqrt[\leftroot{-2}\uproot{2}6]{\frac{\alpha}{(\alpha+1)^2}}}+1\right)
\text{ and }
|\BH_{H}^n(\alpha,\alpha+1)|^{\frac{1}{n}} \le \frac{4 (\alpha+1)^{\frac{\alpha+1}{\alpha}}}{\alpha}.
$$
The comparison of the two upper bounds with respect to the same hitting ratio is shown in Figure~\ref{fig:bvh}.
It verifies a clear lower cardinality for $\BH_{H}^n$ compared to that for $\BH_B^n$. Moreover, $|\BH_{H}^n(\alpha,\beta)|$ is invariant to $p$ from Proposition~\ref{thm:HE} while $|\BH_B^n(\gamma)|$ attains its minimum when $p=6$. If $p$ deviates more from $6$, the gap between the two cardinalities may be even larger. On the other hand, $\BH_B^n(\gamma)$ is able to obtain a close-to-one hitting ratio that $\BH_{H}^n(\alpha,\beta)$ cannot achieve, but with a price of large cardinality.

\setlength{\abovecaptionskip}{0pt plus 3pt minus 2pt}
\begin{wrapfigure}[17]{r}{0.42\linewidth}
    \centering
    \includegraphics[width=\linewidth]{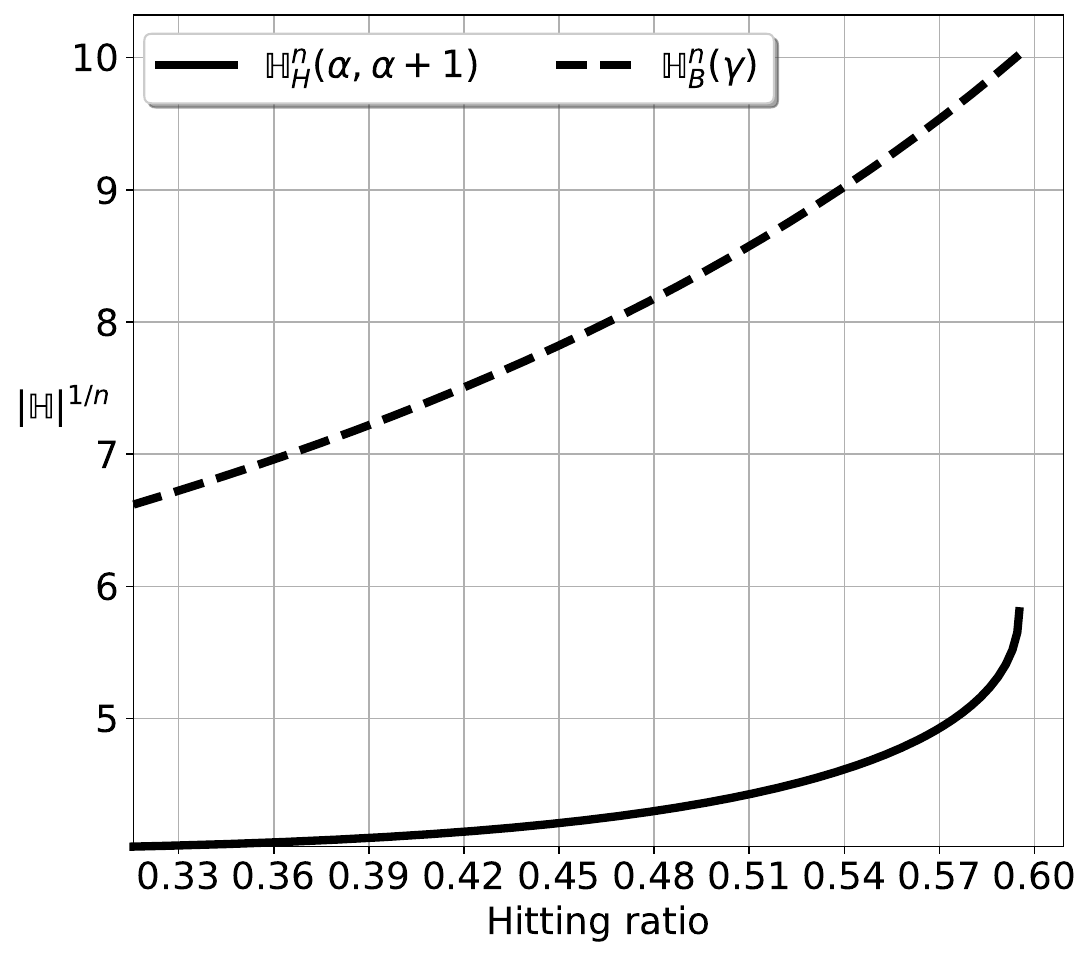}
    \caption{Comparison of upper bounds of $|\BH_B^n(\gamma)|$ and $|\BH_{H}^n(\alpha,\alpha+1)|$ when $p=6$.}
    \label{fig:bvh}
\end{wrapfigure}
Before proceeding to construct $\Omega(\sqrt[q]{\ln n/n})$-hitting sets of $\BS_p^n$, we need to establish two basic properties of hitting sets of $\BS_p^n$. They are straightforwardly generalized from~\cite[Lemma~2.12]{he2023approx} and~\cite[Lemma~2.13]{he2023approx} for hitting sets of $\BS_2^n$, the Euclidean sphere. We omit the proofs as they are simple and similar to that for $\BS_2^n$.

\begin{lemma}\label{lma:kronecker-hitting}
For any $p\in(1,\infty)$, if a hitting set $\BH^{n_1} \in \BT^{n_1}_p(\tau, m)$ with $\tau\ge0$, then 
$$\BE^{n_2} \boxtimes \BH^{n_1} \in \BT^{n_1 n_2}_p\left(\frac{\tau}{\sqrt[\leftroot{-2}\uproot{2}q]{n_2}}, m n_2\right).$$
\end{lemma}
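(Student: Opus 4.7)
The goal is to verify two things about $\BE^{n_2}\boxtimes\BH^{n_1}$: first, that every element lies in $\BS^{n_1n_2}_p$ and the total cardinality is at most $mn_2$; second, that for every $\bx\in\BS^{n_1n_2}_q$ some element of this set attains inner product at least $\tau/\sqrt[q]{n_2}$ with $\bx$. The cardinality and membership in $\BS^{n_1n_2}_p$ are immediate from the definition of the Kronecker product, since $\be_j\boxtimes\bu$ for $\be_j\in\BE^{n_2}$ and $\bu\in\BH^{n_1}$ is just $\bu$ placed in the $j$th block of a zero vector of length $n_1n_2$, so $\|\be_j\boxtimes\bu\|_p=\|\bu\|_p=1$.

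The key step is the hitting property. I would decompose any $\bx\in\BS^{n_1n_2}_q$ into $n_2$ blocks $\bx^{(1)},\bx^{(2)},\dots,\bx^{(n_2)}\in\R^{n_1}$ of length $n_1$ each, so that
\[
\sum_{j=1}^{n_2}\|\bx^{(j)}\|_q^q=\|\bx\|_q^q=1.
\]
Choose $j^*\in\arg\max_j\|\bx^{(j)}\|_q$, which by averaging gives $\|\bx^{(j^*)}\|_q\ge n_2^{-1/q}$. If $\bx^{(j^*)}=\bd{0}$ the trivial case $\tau=0$ applies; otherwise normalize $\hat\bx:=\bx^{(j^*)}/\|\bx^{(j^*)}\|_q\in\BS_q^{n_1}$. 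Since $\BH^{n_1}$ is a $\tau$-hitting set of $\BS^{n_1}_p$, there exists $\bu\in\BH^{n_1}$ with $\bu^{\T}\hat\bx\ge\tau$.

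Putting the pieces together, the vector $\bv:=\be_{j^*}\boxtimes\bu\in\BE^{n_2}\boxtimes\BH^{n_1}$ satisfies
\[
\bv^{\T}\bx=\bu^{\T}\bx^{(j^*)}=\|\bx^{(j^*)}\|_q\,\bu^{\T}\hat\bx\ge\frac{\tau}{\sqrt[q]{n_2}},
\]
which is exactly the required hitting ratio. No step is really an obstacle: the only subtlety worth flagging is identifying $\R^{n_1n_2}$ with $(\R^{n_1})^{n_2}$ in a way compatible with the Kronecker product convention used in the paper, so that $(\be_j\boxtimes\bu)^{\T}\bx$ indeed equals $\bu^{\T}\bx^{(j)}$; this is immediate from the explicit entrywise definition of $\boxtimes$ given in Section~\ref{sec:notation}.
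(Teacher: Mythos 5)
Your proof is correct and is precisely the block-decomposition/pigeonhole argument that the paper has in mind when it says this lemma is ``straightforwardly generalized'' from the $\BS_2^n$ case of~\cite{he2023approx} and omits the proof. One small tidying note: the case $\bx^{(j^*)}=\bd{0}$ cannot actually arise, since $\bx\in\BS_q^{n_1n_2}$ forces $\|\bx\|_q=1$ and $j^*$ is chosen to maximize $\|\bx^{(j)}\|_q$, so $\|\bx^{(j^*)}\|_q\ge n_2^{-1/q}>0$; the caveat you flag is therefore vacuous and can be dropped.
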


\begin{lemma}\label{lma:appending-hitting}
For any $p\in(1,\infty)$, if two hitting sets $\BH^{n_1} \in \BT^{n_1}_p\left(\tau_1, m_1\right)$ and $\BH^{n_2} \in \BT^{n_2}_p\left(\tau_2, m_2\right)$ with $\tau_1,\tau_2>0$, then
$$
\left(\BH^{n_1} \vee \mathbf{0}_{n_2}\right) \bigcup\left(\mathbf{0}_{n_1} \vee \BH^{n_2}\right) \in \BT^{n_1+n_2}_p\left(\frac{\tau_1 \tau_2}{\sqrt[\leftroot{-2}\uproot{2}q]{{\tau_1}^q+{\tau_2}^q}}, m_1+m_2\right).
$$
\end{lemma}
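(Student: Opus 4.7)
The plan is to verify each component of the claimed membership in $\BT_p^{n_1+n_2}(\tau,m_1+m_2)$ separately. First I would check that every element of $(\BH^{n_1}\vee\bd{0}_{n_2})\cup(\bd{0}_{n_1}\vee \BH^{n_2})$ indeed lies on $\BS_p^{n_1+n_2}$: this is immediate because $\|\bv\vee\bd{0}_{n_2}\|_p^p=\|\bv\|_p^p=1$ for any $\bv\in\BH^{n_1}\subseteq\BS_p^{n_1}$, and likewise for the other piece. The cardinality bound $m_1+m_2$ is then obvious, since the two subfamilies are disjoint (their supports are disjoint).

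The heart of the proof is the hitting-ratio estimate. Given an arbitrary $\bx\in\BS_q^{n_1+n_2}$, I would split it as $\bx=\bx_1\vee\bx_2$ with $\bx_j\in\R^{n_j}$, so that $\|\bx_1\|_q^q+\|\bx_2\|_q^q=1$. Assuming $\|\bx_1\|_q>0$, the normalized vector $\bx_1/\|\bx_1\|_q$ lies on $\BS_q^{n_1}$, hence by the hitting property of $\BH^{n_1}$ there exists $\bv_1\in\BH^{n_1}$ with $\bv_1^{\T}\bx_1\ge\tau_1\|\bx_1\|_q$. Extending to $\R^{n_1+n_2}$ gives $(\bv_1\vee\bd{0}_{n_2})^{\T}\bx\ge\tau_1\|\bx_1\|_q$; similarly, if $\|\bx_2\|_q>0$ we obtain $(\bd{0}_{n_1}\vee\bv_2)^{\T}\bx\ge\tau_2\|\bx_2\|_q$ for some $\bv_2\in\BH^{n_2}$. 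Thus some vector in the union achieves inner product at least
\[
\max\bigl\{\tau_1\|\bx_1\|_q,\,\tau_2\|\bx_2\|_q\bigr\}.
\]
The edge cases in which $\|\bx_1\|_q=0$ or $\|\bx_2\|_q=0$ are trivial, since then $\|\bx_2\|_q=1$ or $\|\bx_1\|_q=1$ and we recover a ratio of $\tau_2$ or $\tau_1$, each of which dominates the claimed bound.

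The remaining, and only slightly delicate, step is the scalar minimization: I would show that for nonnegative $a,b$ with $a^q+b^q=1$,
\[
\max\{\tau_1 a,\tau_2 b\}\ge\frac{\tau_1\tau_2}{\sqrt[\leftroot{-2}\uproot{2}q]{\tau_1^q+\tau_2^q}}.
\]
The minimum of the left hand side is attained at the balanced point $\tau_1 a=\tau_2 b$; writing this common value as $t$ and substituting $a=t/\tau_1$, $b=t/\tau_2$ into the constraint yields $t^q(\tau_1^{-q}+\tau_2^{-q})=1$, so $t=\tau_1\tau_2/(\tau_1^q+\tau_2^q)^{1/q}$, which is exactly the claimed ratio. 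Combining this with the previous paragraph furnishes the required hitting vector for every $\bx\in\BS_q^{n_1+n_2}$. The main, albeit modest, obstacle is making the balanced-point argument rigorous; a one-line convexity/monotonicity argument suffices, since if $\tau_1 a<t^*$ then $a<a^*$, forcing $b>b^*$ and hence $\tau_2 b>t^*$, where $(a^*,b^*)$ denotes the balanced solution.
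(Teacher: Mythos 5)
Your argument is correct and follows exactly the natural strategy one would use here (and which the paper silently defers to by citing the $p=2$ analogue in \cite[Lemma~2.13]{he2023approx}): split $\bx=\bx_1\vee\bx_2$ so that $\|\bx_1\|_q^q+\|\bx_2\|_q^q=1$, apply the hitting property on each block, and then reduce to the scalar fact that $\max\{\tau_1 a,\tau_2 b\}\ge \tau_1\tau_2/(\tau_1^q+\tau_2^q)^{1/q}$ whenever $a^q+b^q=1$. The monotonicity justification of the balanced-point minimum is the right way to make that last step rigorous, and the edge cases $\|\bx_1\|_q=0$ or $\|\bx_2\|_q=0$ are handled correctly.
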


With all the preparation ready, in particular the $\Omega(1)$-hitting sets of $\BS^n_p$, we are able to construct $\Omega(\sqrt[q]{\ln n/n})$-hitting sets of $\BS_p^n$ with polynomial cardinality. The construction is based on the Kronecker product with the standard basis of a Euclidean space.
\begin{corollary}\label{cor:lnn1q}
Given an integer $n\ge 2$, let $n_1=\lceil\ln n\rceil$, $n_2=\lfloor \frac{n}{n_1}\rfloor$ and $n_3 = n-n_1 n_2$.
For any $p\in(1,\infty)$, $\alpha\ge1$ and $\beta\ge\alpha+1$, one has 
        \begin{align*}
        \BH_{1}^n(\alpha,\beta)&:=\left(\left(\BE^{n_2} \boxtimes \BH_H^{n_1}(\alpha, \beta)\right) \vee \mathbf{0}_{n_3}\right) \bigcup\left(\mathbf{0}_{n_1 n_2} \vee \BH_H^{n_3}(\alpha, \beta)\right) \\
        &\in \BT_p^n\left(\mu_{\alpha,\beta}\sqrt[\leftroot{-2}\uproot{2}q]{\frac{\ln{n}}{n+\ln{n}}}, n^{\ln{\nu_{\alpha,\beta}}}\left(\frac{\nu_{\alpha,\beta}n}{\ln n}+1\right)\right).
        \end{align*}
\end{corollary}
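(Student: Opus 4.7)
The plan is to combine three building blocks already established in the excerpt: Proposition~\ref{thm:HE} supplies an $\Omega(1)$-hitting set $\BH_H^{n_1}(\alpha,\beta)$ on the smaller sphere $\BS_p^{n_1}$, while Lemmas~\ref{lma:kronecker-hitting} and~\ref{lma:appending-hitting} provide a Kronecker inflation step and a coordinate-appending step respectively. The whole argument is essentially a bookkeeping exercise once one picks the right parameters: $n_1 = \lceil\ln n\rceil$ is the value that turns the otherwise-exponential cardinality $\nu_{\alpha,\beta}^{n_1}$ into the polynomial quantity $\nu_{\alpha,\beta}\cdot n^{\ln \nu_{\alpha,\beta}}$, and the Kronecker multiplier $n_2 = \lfloor n/n_1\rfloor \leq n/\ln n$ is simultaneously small enough that the inflation only costs a factor $\sqrt[q]{n_2}$ in the hitting ratio, leaving the advertised $\Omega(\sqrt[q]{\ln n/n})$ bound. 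The residual $n_3 = n - n_1 n_2 < n_1$ coordinates are then absorbed by a second copy of $\BH_H$ appended on.

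First I would apply Proposition~\ref{thm:HE} twice to produce $\BH_H^{n_1}(\alpha,\beta)\in\BT_p^{n_1}(\mu_{\alpha,\beta},\nu_{\alpha,\beta}^{n_1})$ and $\BH_H^{n_3}(\alpha,\beta)\in\BT_p^{n_3}(\mu_{\alpha,\beta},\nu_{\alpha,\beta}^{n_3})$. Lemma~\ref{lma:kronecker-hitting} with multiplier $n_2$ inflates the first into a hitting set of $\BS_p^{n_1 n_2}$ with ratio $\mu_{\alpha,\beta}/\sqrt[q]{n_2}$ and cardinality $\nu_{\alpha,\beta}^{n_1} n_2$. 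Since $n_1 n_2 + n_3 = n$, Lemma~\ref{lma:appending-hitting} then splices the two parts into $\BH_1^n(\alpha,\beta)$ exactly as in the statement; a short computation collapses the composite hitting ratio $(\tau_1\tau_2)/\sqrt[q]{\tau_1^q+\tau_2^q}$ with $\tau_1 = \mu_{\alpha,\beta}/\sqrt[q]{n_2}$ and $\tau_2 = \mu_{\alpha,\beta}$ to $\mu_{\alpha,\beta}/\sqrt[q]{n_2+1}$, and the total cardinality is $\nu_{\alpha,\beta}^{n_1} n_2 + \nu_{\alpha,\beta}^{n_3}$.

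Plugging in $n_1 \leq \ln n + 1$, $n_2 \leq n/\ln n$, and $n_3 \leq \lceil\ln n\rceil - 1 \leq \ln n$ closes out the estimates: $1 + n_2 \leq (n+\ln n)/\ln n$ delivers the hitting ratio lower bound $\mu_{\alpha,\beta}\sqrt[q]{\ln n/(n+\ln n)}$, while $\nu_{\alpha,\beta}^{n_1} \leq \nu_{\alpha,\beta}\cdot n^{\ln\nu_{\alpha,\beta}}$ and $\nu_{\alpha,\beta}^{n_3} \leq n^{\ln\nu_{\alpha,\beta}}$ combine to the announced cardinality bound $n^{\ln\nu_{\alpha,\beta}}(\nu_{\alpha,\beta} n/\ln n + 1)$. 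The one delicate point I would handle explicitly is the boundary case $n_3 = 0$: then $\mathbf{0}_{n_1 n_2} \vee \BH_H^{n_3}(\alpha,\beta)$ is vacuous and the appending lemma cannot be invoked directly, but in that case $n = n_1 n_2$ exactly and the Kronecker inflation alone already yields a hitting set of $\BS_p^n$ with ratio $\mu_{\alpha,\beta}/\sqrt[q]{n_2} \geq \mu_{\alpha,\beta}\sqrt[q]{\ln n/(n+\ln n)}$ and the same cardinality estimate (with the $\nu_{\alpha,\beta}^{n_3}$ term dropped). Aside from this edge case, the argument is a transparent composition of the three preceding results and I expect no further obstacle.
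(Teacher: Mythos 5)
Your proposal is correct and follows exactly the route the paper intends: the paper states the corollary is "an immediate consequence of Proposition~\ref{thm:HE}, Lemma~\ref{lma:kronecker-hitting} and Lemma~\ref{lma:appending-hitting}," and you compose those three results with the right parameter bookkeeping. You also handle the $n_3=0$ edge case (where the appending lemma cannot be invoked) that the paper silently skips, which is a welcome extra care rather than a deviation.
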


The above result is an immediate consequence of Proposition~\ref{thm:HE}, Lemma~\ref{lma:kronecker-hitting} and Lemma~\ref{lma:appending-hitting}. 
\li{We remark that the hitting ratio $\Omega(\sqrt[q]{\ln n/n})$ for $\ell_p$-sphere in Corollary~\ref{cor:lnn1q} is the best possible by a deterministic hitting set with polynomial cardinality for $p\in(1,2]$; see~\cite[Theorem~3.2]{brieden1998approximation}.}

\subsection{$\Omega(\sqrt[p]{\ln{n}}/\sqrt{n})$-hitting sets of $\BS_p^n$}\label{sec:brieden-hitting-set}

\li{Although $\BH_{1}^n(\alpha,\beta)$ achieves the best hitting ratio 
when $p\in(1,2]$, it is not the best when $p\in(2,\infty)$. In this subsection we propose an improved one only when $p\in[2,\infty)$. It is accomplished by a novel application of Walsh-Hadamard transform that plays a key role of improvement in the construction.} 


To begin with, we introduce \li{Walsh-Hadamard transform~\cite{YH97}, a class} of matrices defined recursively via Kronecker products:
$$I_{1,0}:=I_1 \text{ and } I_{1,k}:=\begin{pmatrix}1&1\\ 1&-1\end{pmatrix} \boxtimes I_{1,k-1}=\begin{pmatrix}I_{1,k-1}&I_{1,k-1}\\ I_{1,k-1}&-I_{1,k-1}\end{pmatrix}\in\R^{2^k\times 2^k} \text{ for } k=1,2,\dots$$
Sometimes there is a normalization factor $2^{-k/2}$ with $I_{1,k}$ and this makes its eigenvalues to be $\pm1$. Without the normalization, the entries of these matrices are either $1$ or $-1$. For our purpose, we extend Walsh-Hadamard transform by replacing $1$ with $I_m$ and $-1$ with $-I_m$, i.e.,
\begin{equation}\label{eq:hmatrix}
  I_{m,0}:=I_m \text{ and } I_{m,k}:=\begin{pmatrix}1&1\\ 1&-1\end{pmatrix}\boxtimes I_{m,k-1}=\begin{pmatrix}I_{m,k-1}&I_{m,k-1}\\ I_{m,k-1}&-I_{m,k-1}\end{pmatrix}\in\R^{2^km\times 2^km} \text{ for } k=1,2,\dots  
\end{equation}
$I_{m,k}$ is symmetric and nonsingular with its inverse given by $2^{-k} I_{m,k}$. Its eigenvalues are $\pm2^{-k/2}$. In fact, upon scaling by the normalization factor $2^{-k/2}$, it actually becomes orthogonal. In particular, the following properties of $I_{m,k}$ are essential to the construction of an improved hitting set.
\begin{lemma}\label{lma:parallelogram}
    For any integer $k\ge0$ and $\bx\in\R^{2^km}$, one has $\|I_{m,k}\bx\|_2=2^{k/2}\|\bx\|_2$. If further $\bx=\be_i\boxtimes \by$ where $\be_i\in\R^{2^k}$ and $\by\in\R^{m}$, then $\|I_{m,k}\bx\|_p=2^{k/p}\|\bx\|_p$.
\end{lemma}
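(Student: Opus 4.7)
The plan is to exploit the Kronecker-product factorization of $I_{m,k}$ and then invoke standard properties of the Walsh-Hadamard matrix together with the parallelogram identity. First I would verify by induction on $k$ (using the mixed-product property of $\boxtimes$) the identity
\[
I_{m,k}=I_{1,k}\boxtimes I_m,
\]
which follows at once from the recursion~\eqref{eq:hmatrix}. Here $I_{1,k}$ is the standard (unnormalized) Walsh-Hadamard matrix, whose entries all lie in $\{-1,+1\}$ and for which $2^{-k/2}I_{1,k}$ is orthogonal.

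For the $\ell_2$-claim I would proceed by induction on $k$ to justify the name of the lemma. The base case $k=0$ is trivial since $I_{m,0}=I_m$. For the inductive step, split $\bx=(\bx_1;\bx_2)$ with $\bx_1,\bx_2\in\R^{2^{k-1}m}$, so that
\[
I_{m,k}\bx=\bigl(I_{m,k-1}\bx_1+I_{m,k-1}\bx_2;\,I_{m,k-1}\bx_1-I_{m,k-1}\bx_2\bigr).
\]
Applying the parallelogram identity $\|\bu+\bv\|_2^2+\|\bu-\bv\|_2^2=2\|\bu\|_2^2+2\|\bv\|_2^2$ with $\bu=I_{m,k-1}\bx_1$ and $\bv=I_{m,k-1}\bx_2$, and then invoking the inductive hypothesis, gives $\|I_{m,k}\bx\|_2^2=2^k(\|\bx_1\|_2^2+\|\bx_2\|_2^2)=2^k\|\bx\|_2^2$ as required. (An alternative is to note that $2^{-k/2}I_{m,k}=(2^{-k/2}I_{1,k})\boxtimes I_m$ is the Kronecker product of two orthogonal matrices, hence itself orthogonal, so it preserves the $\ell_2$-norm.)

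For the $\ell_p$-claim with the special structure $\bx=\be_i\boxtimes\by$, the factorization above together with the mixed-product property yields
\[
I_{m,k}(\be_i\boxtimes\by)=(I_{1,k}\boxtimes I_m)(\be_i\boxtimes\by)=(I_{1,k}\be_i)\boxtimes\by.
\]
Since $I_{1,k}\be_i$ is the $i$-th column of $I_{1,k}$, all of its $2^k$ entries belong to $\{-1,+1\}$, so the vector on the right-hand side consists of $2^k$ blocks each equal to $\pm\by$. Hence $\|I_{m,k}\bx\|_p^p=\|I_{1,k}\be_i\|_p^p\,\|\by\|_p^p=2^k\|\by\|_p^p$. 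Combining this with the identity $\|\be_i\boxtimes\by\|_p=\|\be_i\|_p\|\by\|_p=\|\by\|_p$ produces $\|I_{m,k}\bx\|_p=2^{k/p}\|\bx\|_p$.

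There is no real obstacle here; the only conceptual step is the factorization $I_{m,k}=I_{1,k}\boxtimes I_m$, after which the $\ell_2$-statement is an orthogonality/parallelogram computation and the $\ell_p$-statement reduces to the trivial observation that permuting and sign-flipping the coordinates of $\by$ leaves $\|\by\|_p$ invariant. The slight subtlety in the second part is that the conclusion is specific to vectors of the form $\be_i\boxtimes\by$: for a general $\bx$ the image $I_{m,k}\bx$ would consist of signed sums of blocks rather than signed copies, and the $\ell_p$-norm identity would fail when $p\neq 2$.
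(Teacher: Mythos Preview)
Your proposal is correct and follows essentially the same approach as the paper. The paper proves the $\ell_2$-claim by the identical parallelogram-identity induction, and for the $\ell_p$-claim it observes directly that $I_{m,k}$ is a $2^k\times 2^k$ block matrix with blocks $\pm I_m$, so $I_{m,k}(\be_i\boxtimes\by)$ consists of $2^k$ blocks each equal to $\pm\by$; your explicit use of the factorization $I_{m,k}=I_{1,k}\boxtimes I_m$ and the mixed-product property is a slightly cleaner packaging of the same observation, but the underlying argument is the same.
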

\begin{proof}
    We prove the first statement by induction. The base case $k=0$ is obvious. Assuming that the statement holds for $k$, we have for any $\bx=\bx_1\vee\bx_2\in\R^{2^{k+1}m}$ with $\bx_1,\bx_2\in\R^{2^km}$ that
    \begin{align*}
    {\|I_{m,k+1}\bx\|_2}^2&={\left\|\begin{pmatrix}I_{m,k} & I_{m,k} \\ I_{m,k} & -I_{m,k}\end{pmatrix}\binom{\bx_1}{\bx_2}\right\|_2}^2\\
    &={\|I_{m,k}\bx_1+I_{m,k}\bx_2\|_2}^2+{\|I_{m,k}\bx_1-I_{m,k}\bx_2\|_2}^2    \\
    &=2\left({\|I_{m,k}\bx_1\|_2}^2+{\|I_{m,k}\bx_2\|_2}^2\right)\\
    &=2\left(2^{k} {\|\bx_1\|_2}^2+2^{k} {\|\bx_2\|_2}^2\right)\\
    &=2^{k+1}{\|\bx\|_2}^2,
    \end{align*}
    where the second-to-last equality is due to the induction assumption.

    For the second statement, we observe from~\eqref{eq:hmatrix} that the matrix $I_{m,k}$ can be partitioned into $2^k\times 2^k$ block matrices with each block being either $I_m$ or $-I_m$ since the entries of Walsh-Hadamard transform are $\pm1$. We also observe that the vector $\be_i\boxtimes \by$ can be partitioned into $2^k$ block vectors in $\R^m$ with the $i$th block being $\by$ and others being zero vectors. Therefore, the vector $I_{m,k}(\be_i\boxtimes \by)\in\R^{2^km}$ can be partitioned into $2^k$ block vectors in $\R^m$ with each block being either $\by$ or $-\by$, implying that
    $$
    \|I_{m,k}\bx\|_p=\|I_{m,k}(\be_i\boxtimes \by)\|_p=\left({2^k{\|\by\|_p}^p}\right)^{1/p}=2^{k/p}\|\by\|_p=2^{k/p}\|\bx\|_p.
    $$   
\end{proof}

Denote $J=2^{-k/p}I_{m,k}$. If $\bx=\be_i\boxtimes \by$, Lemma~\ref{lma:parallelogram} tells us that $\|J\bx\|_p=\|\bx\|_p$ but $\|J\bx\|_2=2^{k/2-k/p}\|\bx\|_2$ is actually enlarged when $p\in(2,\infty)$. Moreover, the property is independent on $m$. \li{This informative observation is key to the improvement and explains how it} can be made by applying $J$ from the construction $\BH_{1}^n(\alpha,\beta)$ when $p\in(2,\infty)$.

\begin{lemma}\label{lma:2km}
For any $p\in[2,\infty)$, if a hitting set $\BH^{m} \in \BT_p^m(\tau, m_0)$, then 
$$2^{-\frac{k}{p}}I_{m,k}\big(\BE^{2^k} \boxtimes \BH^{m}\big) \in \BT_p^{2^km}\big(2^{-\frac{k}{2}} m^{\frac{1}{2}-\frac{1}{q}}\tau,2^k m_0\big).$$
\end{lemma}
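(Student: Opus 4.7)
My plan is to verify the two defining properties of a hitting set for the constructed family $2^{-k/p}I_{m,k}(\BE^{2^k}\boxtimes\BH^m)$. The cardinality bound is immediate because $I_{m,k}$ is invertible, so scaling and applying it preserve cardinality, while $|\BE^{2^k}\boxtimes\BH^m| \le 2^k m_0$. To confirm that each $\bv = 2^{-k/p}I_{m,k}(\be_i\boxtimes\bh)$ actually lies on $\BS_p^{2^km}$, I will apply the second statement of Lemma~\ref{lma:parallelogram} to the single-block vector $\be_i\boxtimes\bh$ with $\bh\in\BH^m\subseteq\BS_p^m$, obtaining $\|I_{m,k}(\be_i\boxtimes\bh)\|_p = 2^{k/p}\|\bh\|_p = 2^{k/p}$, whence $\|\bv\|_p = 1$.

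For the hitting ratio, given an arbitrary $\bx\in\BS_q^{2^km}$, I will set $\by := I_{m,k}\bx$ and partition $\by = \by_1 \vee \by_2 \vee \cdots \vee \by_{2^k}$ with each $\by_i\in\R^m$. Using the symmetry of $I_{m,k}$ together with the block structure of the Kronecker product, the target inner product becomes $\bv^{\T}\bx = 2^{-k/p}(\be_i\boxtimes\bh)^{\T}\by = 2^{-k/p}\bh^{\T}\by_i$. The problem therefore reduces to choosing an index $i^*$ making $\|\by_{i^*}\|_q$ as large as possible: once this is in hand, applying the $\tau$-hitting property of $\BH^m$ to $\by_{i^*}/\|\by_{i^*}\|_q\in\BS_q^m$ produces some $\bh\in\BH^m$ with $\bh^{\T}\by_{i^*}\ge \tau\|\by_{i^*}\|_q$.

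The main obstacle, and the place where the hypothesis $p\ge 2$ enters decisively, is obtaining a strong lower bound on $\max_i\|\by_i\|_q$; a direct estimate through $\|\by\|_q$ would yield only a weaker bound (with $2^{-k/q}$ in place of the required $2^{-k/2}$). Instead I will route through the $\ell_2$ norm, where Walsh--Hadamard transform does the real work: by the first statement of Lemma~\ref{lma:parallelogram}, $\|\by\|_2^2 = 2^k\|\bx\|_2^2$, so pigeonhole on $\sum_i\|\by_i\|_2^2 = \|\by\|_2^2$ produces $i^*$ with $\|\by_{i^*}\|_2\ge\|\bx\|_2$. Since $p\ge 2$ gives $q\le 2$, two applications of Lemma~\ref{lma:lp-norm-equiv} then yield $\|\by_{i^*}\|_q\ge\|\by_{i^*}\|_2$ and $\|\bx\|_2\ge(2^km)^{1/2-1/q}\|\bx\|_q = (2^km)^{1/2-1/q}$, which combine to $\|\by_{i^*}\|_q\ge 2^{k(1/2-1/q)}m^{1/2-1/q}$. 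Substituting into $\bv^{\T}\bx\ge 2^{-k/p}\tau\|\by_{i^*}\|_q$ and invoking the conjugate identity $1/p+1/q=1$ to collapse the exponent $-k/p + k/2 - k/q$ to $-k/2$ will deliver exactly the claimed hitting ratio $2^{-k/2}m^{1/2-1/q}\tau$.
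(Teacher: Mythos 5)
Your proof is correct and follows essentially the same route as the paper's: both verify membership in $\BS_p^{2^km}$ via the second statement of Lemma~\ref{lma:parallelogram}, then use the first statement (the $\ell_2$-isometry property of the Walsh--Hadamard transform up to scale) plus a pigeonhole on $\sum_i\|\by_i\|_2^2$ and the $\ell_q\ge\ell_2\ge(2^km)^{1/2-1/q}\ell_q$ comparisons from Lemma~\ref{lma:lp-norm-equiv} to isolate a block $\by_{i^*}$ with $\|\by_{i^*}\|_q$ large enough. The only cosmetic difference is that you work with $\by=I_{m,k}\bx$ while the paper carries the scalar $2^{-k/p}$ into $\bz=2^{-k/p}I_{m,k}\bx$; the exponent bookkeeping and final bound are identical.
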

\begin{proof}
  Given a vector $2^{-k/p}I_{m,k}(\be_i\boxtimes\by)$ in $2^{-k/p}I_{m,k}(\BE^{2^k} \boxtimes \BH^{m})$, $\|2^{-k/p}I_{m,k}(\be_i\boxtimes \by)\|_p=1$ has been verified by the second statement of Lemma~\ref{lma:parallelogram}. The upper bound of the cardinality, $2^km_0$, is also obvious. It suffices to show the lower bound of the hitting ratio.
    

    For any $\bx\in\BS_q^{2^km}$ with $q\in(1,2]$, denote $2^{-k/p}I_{m,k}\bx=\bz_1\vee\bz_2\vee\dots\vee\bz_{2^k}$ where $\bz_i\in\R^m$ for $i=1,2,\dots,2^k$. By the first statement of Lemma~\ref{lma:parallelogram} and the bounds between $\ell_p$-norms (Lemma~\ref{lma:lp-norm-equiv}), we have
    $$
    2^{\frac{k}{2}}\max_{1\le i\le 2^k}\|\bz_i\|_2 \ge \left(\sum_{i=1}^{2^k}{\|\bz_i\|_2}^2\right)^{1/2}
    = \|2^{-\frac{k}{p}}I_{m,k}\bx\|_2 = 2^{\frac{k}{2}-\frac{k}{p}}\|\bx\|_2\ge 2^{\frac{k}{2}-\frac{k}{p}} (2^km)^{\frac{1}{2}-\frac{1}{q}} \|\bx\|_q = m^{\frac{1}{2}-\frac{1}{q}}.
    $$   
    There exists some $j$ such that $\|\bz_j\|_2\ge 2^{-k/2}m^{1/2-1/q}$.
    Moreover, there also exists a vector $\bw\in\BH^m$ such that $\bw^{\T}(\bz_j/\|\bz_j\|_q)\ge\tau$. As a result, we find a vector $2^{-k/p}I_{m,k}(\be_j\boxtimes \bw)\in 2^{-k/p}I_{m,k}(\BE^{2^k} \boxtimes \BH^{m})$ such that
    $$
    \bx^{\T}2^{-\frac{k}{p}}I_{m,k}(\be_j\boxtimes \bw)=\big(2^{-\frac{k}{p}}I_{m,k}\bx\big)^{\T}(\be_j\boxtimes \bw)=\bz_j^{\T}\bw\ge \tau \|\bz_j\|_q\ge  \tau \|\bz_j\|_2\ge 2^{-\frac{k}{2}}m^{\frac{1}{2}-\frac{1}{q}} \tau,
    $$
    where the first equality holds because $I_{m,k}$ is symmetric.
    %
\end{proof}

\li{The trickiest part in this proof is to evenly partition $2^{-k/p}I_{m,k}\bx$.
} If we merely adopt $\BE^{2^k} \boxtimes \BH^{m}$ without applying $2^{-k/p}I_{m,k}$, then the hitting ratio is $2^{-k/q}\tau$ by Lemma~\ref{lma:kronecker-hitting}. Therefore, applying $2^{-k/p}I_{m,k}$ improves the ratio with a factor of $2^{k/q-k/2}m^{1/2-1/q}=(2^k/m)^{1/q-1/2}$, which is indeed an improvement if $2^k>m$ for $q\in(1,2)$.

Suppose now we have $2^k\approx n/ \ln n$ and $m\approx\ln n$. Applying Lemma~\ref{lma:2km} with an $\Omega(1)$-hitting set $\BH^m$, the hitting ratio of $2^{-k/p}I_{m,k}(\BE^{2^k} \boxtimes \BH^{m})$ is then
$2^{-k/2} m^{1/2-1/q}\Omega(1)
=\Omega(\sqrt[p]{\ln{n}}/\sqrt{n})$, a clear improvement from the previous $\Omega(\sqrt[q]{\ln n/n})$-hitting sets in Section~\ref{sec:worst-hitting} when $p\in(2,\infty)$. Therefore, we only need to handle a bit of details on the integerity as $2^k$ can never be $n/\ln n$. Appending a short vector such as Lemma~\ref{lma:appending-hitting} is not doable in this case as we may need to append a vector of dimension $2^k\approx n/\ln n$. Instead, we can apply cutting from a longer vector, \li{a new tool to the design of vectors. It is} essentially the following monotonicity.
\begin{lemma}\label{lma:cut}
For any $p\in(1,\infty)$ and $n_1,n_2\in\BN$ with $n_1\le n_2$, if a hitting set $\BH^{n_2} \in \BT^{n_2}_p\left(\tau, m\right)$ with $\tau>0$, then
$$
\BH^{n_1}(\BH^{n_2}):=\left\{\frac{\bz}{\|\bz\|_p}:\bz\in\R^{n_1},\,\bz\neq {\bf 0},\,\bz\vee\by\in\BH^{n_2}\text{ for some }\by\in\R^{n_2-n_1}\right\}\in \BT^{n_1}_p\left(\tau, m\right).
$$
\end{lemma}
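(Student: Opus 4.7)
The plan is a straightforward monotonicity argument: lifting a vector on $\BS^{n_1}_q$ to $\BS^{n_2}_q$ by zero-padding preserves the $\ell_q$-norm, while truncating a vector on $\BS^{n_2}_p$ to its first $n_1$ coordinates can only shrink its $\ell_p$-norm. The inner product between the padded and truncated pair equals the inner product of the originals, so H\"older-type hitting carries over.

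Concretely, I would first take any $\bx\in\BS^{n_1}_q$ and form its zero-padded extension $\tilde{\bx}:=\bx\vee\mathbf{0}_{n_2-n_1}\in\R^{n_2}$, noting that $\|\tilde{\bx}\|_q=\|\bx\|_q=1$, so $\tilde{\bx}\in\BS^{n_2}_q$. By the hitting property of $\BH^{n_2}$, there exists $\bv\in\BH^{n_2}$ with $\tilde{\bx}^{\T}\bv\ge\tau$. Writing $\bv=\bz\vee\by$ with $\bz\in\R^{n_1}$ and $\by\in\R^{n_2-n_1}$, a direct computation gives $\tilde{\bx}^{\T}\bv=\bx^{\T}\bz\ge\tau>0$, which in particular forces $\bz\neq\mathbf{0}$; hence $\bz/\|\bz\|_p\in\BH^{n_1}(\BH^{n_2})$. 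From $\|\bv\|_p=1$ and the monotonicity $\|\bz\|_p^p=\|\bv\|_p^p-\|\by\|_p^p\le 1$, we obtain $\|\bz\|_p\le 1$, and therefore
\[
\bx^{\T}\frac{\bz}{\|\bz\|_p}=\frac{\bx^{\T}\bz}{\|\bz\|_p}\ge\bx^{\T}\bz\ge\tau,
\]
which establishes the hitting ratio.

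For the cardinality bound, each element of $\BH^{n_1}(\BH^{n_2})$ arises (after normalization) from the truncation of at least one vector in $\BH^{n_2}$, so the map sending (a representative) $\bv\in\BH^{n_2}$ with nonzero leading block to $\bz/\|\bz\|_p$ is surjective onto $\BH^{n_1}(\BH^{n_2})$, giving $|\BH^{n_1}(\BH^{n_2})|\le|\BH^{n_2}|\le m$.

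I do not foresee any genuine obstacle: the only point requiring mild care is ensuring $\bz\neq\mathbf{0}$ (so that the normalization is well defined and the element indeed lies in $\BH^{n_1}(\BH^{n_2})$ as defined in the statement), which is automatic because the inner product $\bx^{\T}\bz\ge\tau>0$ rules out the zero truncation. The inequality $\|\bz\|_p\le 1$ is the mechanism that \emph{improves} (or at least preserves) the hitting ratio under truncation, which is the conceptual content of the lemma and justifies calling this operation a ``cutting'' tool dual to the ``appending'' tool of Lemma~\ref{lma:appending-hitting}.
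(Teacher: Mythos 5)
Your argument is correct and is essentially identical to the paper's proof: zero-pad $\bx\in\BS_q^{n_1}$, hit with $\bz\vee\by\in\BH^{n_2}$, note $\bz\neq\mathbf{0}$ since $\bx^{\T}\bz\ge\tau>0$, and use $\|\bz\|_p\le\|\bz\vee\by\|_p=1$ to conclude $\bx^{\T}\bz/\|\bz\|_p\ge\tau$, with the cardinality bound following from the induced surjection. No discrepancies.
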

\begin{proof}
    For any $\bx\in\BS_q^{n_1}$, it is obvious that $\bx\vee{\bf 0}_{n_2-n_1}\in\BS_q^{n_2}$. There is a vector $\bz\vee\by\in\BH^{n_2}$ with $\bz\in\R^{n_1}$ and $\by\in\R^{n_2-n_1}$ such that $(\bx\vee{\bf 0})^{\T} (\bz\vee\by) \ge \tau$, i.e., $\bx^{\T}\bz \ge \tau$. By that $\bz\neq {\bf 0}$ as $\tau>0$, we find a vector $\bz/\|\bz\|_p\in\BH^{n_1}(\BH^{n_2})$ such that 
    $$
    \bx^{\T}\frac{\bz}{\|\bz\|_p}\ge\frac{\tau}{\|\bz\|_p}\ge\frac{\tau}{\|\bz\vee\by\|_p} = \tau.
    $$
    On the other hand, it is easy to see that $|\BH^{n_1}(\BH^{n_2})|\le|\BH^{n_2}|\le m$.
\end{proof}

We conclude this subsection by proposing the following hitting set with an improved hitting ratio when $p\in(2,\infty)$ as an immediate consequence of Lemma~\ref{lma:2km}, Proposition~\ref{thm:HE} and Lemma~\ref{lma:cut}.
\begin{corollary}\label{thm:h2}
Given an integer $n\ge 2$, let $k=\lfloor\log_2\frac{n}{\ln n}\rfloor$ and $m=\lceil2^{-k}n\rceil$.
For any $p\in[2,\infty)$, $\alpha\ge1$ and $\beta\ge\alpha+1$, one has 
        \begin{align*}
        \BH_{2}^n(\alpha,\beta)&:=\BH^{n}\left(2^{-\frac{k}{p}}I_{m,k}\big(\BE^{2^k}\boxtimes \BH_{H}^m(\alpha, \beta) \big)\right)
\in \BT_p^n\left(\frac{\mu_{\alpha,\beta}\sqrt[\leftroot{-2}\uproot{2}p]{\ln n}}{\sqrt{2n}}, \frac{\nu_{\alpha,\beta}n^{2\ln{\nu_{\alpha,\beta}}+1}}{\ln{n}}\right).
        \end{align*}
\end{corollary}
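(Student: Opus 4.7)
The plan is to treat this as a pure composition of the three preceding building blocks: the base $\Omega(1)$-hitting set from Proposition~\ref{thm:HE}, the Hadamard-enhanced Kronecker construction from Lemma~\ref{lma:2km}, and the cutting device from Lemma~\ref{lma:cut}. All the creative work has been done in those three results; what remains is to chain them with the specific parameter choices $k=\lfloor\log_2\frac{n}{\ln n}\rfloor$ and $m=\lceil 2^{-k}n\rceil$ and to confirm the stated ratio and cardinality by elementary estimates.

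First, I would instantiate Proposition~\ref{thm:HE} at dimension $m$ to obtain $\BH_H^m(\alpha,\beta)\in\BT_p^m(\mu_{\alpha,\beta},\nu_{\alpha,\beta}^{\,m})$. Feeding this into Lemma~\ref{lma:2km} with the above $k$ immediately yields
\[
2^{-k/p}I_{m,k}\bigl(\BE^{2^k}\boxtimes\BH_H^m(\alpha,\beta)\bigr)\in\BT_p^{\,2^k m}\!\left(2^{-k/2} m^{1/2-1/q}\mu_{\alpha,\beta},\,2^{k}\nu_{\alpha,\beta}^{\,m}\right).
\]
Since the choice of $m$ gives $2^k m\ge n$, the cutting lemma (Lemma~\ref{lma:cut}) applied with $n_1=n$ and $n_2=2^k m$ preserves both the hitting ratio and the cardinality upper bound, and produces the claimed set $\BH_2^n(\alpha,\beta)\in\BT_p^n(\cdot,\cdot)$ in the ambient dimension $n$. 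This is the entire architectural skeleton of the proof.

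The only real work is verifying that the two quoted quantities arise from the chosen $k,m$. From $k=\lfloor\log_2(n/\ln n)\rfloor$ one extracts the two-sided estimate $\frac{n}{2\ln n}<2^k\le\frac{n}{\ln n}$, which in turn gives $\ln n\le 2^{-k}n<2\ln n$, hence $m=\lceil 2^{-k}n\rceil$ satisfies $\ln n\le m\le 2\ln n+1$ and also $2^k m\le 2n$. For the ratio, I rewrite $m^{1/2-1/q}=m^{1/p-1/2}$ so that
\[
2^{-k/2}m^{1/2-1/q}\mu_{\alpha,\beta}=\frac{m^{1/p}}{\sqrt{2^k m}}\,\mu_{\alpha,\beta}\ge\frac{(\ln n)^{1/p}}{\sqrt{2n}}\,\mu_{\alpha,\beta},
\]
using $m\ge\ln n$ and $2^k m\le 2n$. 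For the cardinality, the bound $m\le 2\ln n+1$ gives $\nu_{\alpha,\beta}^{\,m}\le\nu_{\alpha,\beta}\cdot n^{2\ln\nu_{\alpha,\beta}}$ (since $\nu_{\alpha,\beta}^{2\ln n}=n^{2\ln\nu_{\alpha,\beta}}$), and combined with $2^k\le n/\ln n$ one gets
\[
2^k\nu_{\alpha,\beta}^{\,m}\le\frac{\nu_{\alpha,\beta}\,n^{2\ln\nu_{\alpha,\beta}+1}}{\ln n}.
\]

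I do not foresee a genuine obstacle; the whole argument is a three-line composition together with some careful bookkeeping of the floor/ceiling operations. The most delicate step is ensuring that $2^k m\ge n$ (so that Lemma~\ref{lma:cut} is legally applicable), which follows at once from $m=\lceil 2^{-k}n\rceil\ge 2^{-k}n$. It is worth mentioning that the Hadamard transform is doing all the heavy lifting conceptually: absent the factor $2^{-k/p}I_{m,k}$, the naive Kronecker construction $\BE^{2^k}\boxtimes\BH_H^m$ would only yield ratio $2^{-k/q}\mu_{\alpha,\beta}=\Omega(\sqrt[q]{\ln n/n})$, matching Corollary~\ref{cor:lnn1q} but not the improved bound here; the refinement to $\Omega(\sqrt[p]{\ln n}/\sqrt{n})$ for $p\in[2,\infty)$ is precisely the gain captured in Lemma~\ref{lma:2km}.
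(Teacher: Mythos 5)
Your proposal is correct and follows essentially the same route as the paper's proof: instantiate Proposition~\ref{thm:HE} at dimension $m$, lift via Lemma~\ref{lma:2km} to dimension $2^k m$, trim down to dimension $n$ with Lemma~\ref{lma:cut}, and close with the same two-sided estimates $\tfrac{n}{2\ln n}<2^k\le\tfrac{n}{\ln n}$, $\ln n\le m<2\ln n+1$, $n\le 2^km\le 2n$. Your algebraic rewriting $2^{-k/2}m^{1/2-1/q}=m^{1/p}/\sqrt{2^km}$ and the cardinality bound $2^k\nu_{\alpha,\beta}^{\,m}\le\nu_{\alpha,\beta}n^{2\ln\nu_{\alpha,\beta}+1}/\ln n$ match the paper's computation line for line.
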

\begin{proof}
   We first observe the following bounds
   $$\frac{n}{2\ln n}< 2^k\le \frac{n}{\ln n},\,\ln n\le 2^{-k}n\le m<2^{-k}n+1 \text{ and } n\le 2^km< n+2^k\le 2n.$$
      By Lemma~\ref{lma:2km} and Proposition~\ref{thm:HE}, $2^{-k/p}I_{m,k}(\BE^{2^k}\boxtimes \BH_{H}^m(\alpha, \beta))$ is a hitting set of $\BS^{2^km}_p$ with the hitting ratio 
    $$
    2^{-\frac{k}{2}} m^{\frac{1}{2}-\frac{1}{q}} \mu_{\alpha,\beta}=(2^km)^{-\frac{1}{2}}m^{\frac{1}{p}}\mu_{\alpha,\beta}
    \ge\frac{\mu_{\alpha,\beta}\sqrt[\leftroot{-2}\uproot{2}p]{\ln n}}{\sqrt{2n}}
    $$
    and the cardinality no more than 
    $$2^k{\nu_{\alpha,\beta}}^m \le \frac{n}{\ln{n}}{\nu_{\alpha,\beta}}^{2^{-k}n+1}\le\frac{n}{\ln{n}}{\nu_{\alpha,\beta}}^{2\ln{n}+1}=\frac{\nu_{\alpha,\beta}n^{2\ln{\nu_{\alpha,\beta}}+1}}{\ln{n}},$$
    where the last inequality follows from $\frac{n}{2\ln n}<2^k$. The conclusion then follows by Lemma~\ref{lma:cut} since $n\le 2^km$.
\end{proof}

\subsection{Randomized $\Omega(\sqrt{\ln n/n})$-hitting sets of $\BS_p^n$}\label{sec:rand-hitting-set}

The $\Omega(\sqrt[p]{\ln{n}}/\sqrt{n})$-hitting sets in Section~\ref{sec:brieden-hitting-set} already significantly increased the $\Omega(\sqrt[q]{\ln{n}/n})$ one in Section~\ref{sec:worst-hitting} when $p\in(2,\infty)$. However, there is still room to be improved with the help of randomization. In particular, we provide randomized $\Omega(\sqrt{\ln{n}/{n}})$-hitting sets of $\BS_p^n$. 

\li{Similar to the randomized $\Omega(\sqrt{\ln{n}/{n}})$-hitting sets of $\BS_2^n$~\cite[Section~2.1]{he2023approx}, the following probability bound of the inner product between a random vector on $\BS_p^n$ and an arbitrary vector on $\BS_q^n$ is a necessity.}
\begin{lemma}[{\cite[Lemma 3.3]{khot2008linear}}]\label{lma:lp-sampling}
    For any $p\in(2,\infty)$ and integer $n\ge2$, let $\by\in\R^n$ whose entries are i.i.d.\ random variables with the probability density function \li{$p e^{-|x|^p}/\int_0^{\infty} 2t^{\frac{1-p}{p}} e^{-t} dt$}. There exist universal constants $\delta_0,\delta_1,\delta_2>0$ such that for any $\bz\in\BS_q^n$
    $$
    \Prob \left\{\frac{\bz^\T \by}{\|\by\|_p} \ge \sqrt{\frac{\delta_0 \ln{n}}{n}}\right\} \ge \frac{\delta_1}{n^{\delta_2}}.
    $$
\end{lemma}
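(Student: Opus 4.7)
The plan is to analyze the numerator $\bz^{\T}\by$ and the denominator $\|\by\|_p$ of the ratio $\bz^{\T}\by/\|\by\|_p$ separately and then intersect the corresponding favorable events. Direct integration against the given density yields $\BE|y_1|^p=1/p$ and $\BE y_1^2=:\sigma^2$, both finite positive constants depending only on $p$; moreover, for $p\ge 2$ the tail $e^{-|x|^p}$ decays at least as fast as Gaussian once $|x|\ge 1$, so each $y_i$ is sub-Gaussian, and since the density is even, $y_1$ is symmetric about $0$.

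The denominator is straightforward: $\|\by\|_p^p=\sum_{i=1}^n|y_i|^p$ is a sum of $n$ i.i.d.\ random variables with mean $1/p$ and bounded variance, so Chebyshev's inequality gives $\Prob\{\|\by\|_p\le C_1 n^{1/p}\}\ge 1-C_2/n$ for universal constants $C_1,C_2>0$. The essential step is to prove, uniformly in $\bz\in\BS_q^n$, a lower-tail estimate
\[
\Prob\left\{\bz^{\T}\by\ge \alpha\sqrt{\ln n}\cdot n^{1/p-1/2}\right\}\ge \beta/n^{\gamma}
\]
for universal constants $\alpha,\beta,\gamma>0$. Because $q\in(1,2)$, the norm equivalence $\|\bz\|_2\ge n^{1/p-1/2}$ combined with $\BE(\bz^{\T}\by)^2=\sigma^2\|\bz\|_2^2$ shows that this threshold is at most $\alpha/\sigma$ times $\sqrt{\ln n}$ standard deviations above the mean, so the claim is of moderate-deviation type. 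My approach is a coordinate split: fix a small universal $\kappa>0$, set $L=\{i:|z_i|\le \kappa\|\bz\|_2/\sqrt{\ln n}\}$ and $H=L^c$; then $|H|\le \ln n/\kappa^2$. Distinguish two cases according to which of $\|\bz_L\|_2,\|\bz_H\|_2$ carries at least half of $\|\bz\|_2^2$. When the light part dominates, the Lyapunov ratio $\max_{i\in L}|z_i|/\|\bz_L\|_2$ is $O(\kappa/\sqrt{\ln n})$, which is small enough for a moderate-deviation bound (e.g., Sakhanenko-type) for sums of independent sub-Gaussians to produce a Gaussian-like lower tail of order $n^{-C\alpha^2}$ at the $\sqrt{\ln n}\,\sigma\|\bz_L\|_2$-level. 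When the heavy part dominates, use independence and symmetry: the event $\{\sign(y_i)=\sign(z_i)$ and $|y_i|\ge c_0$ for all $i\in H\}$ has probability at least $c_1^{|H|}\ge n^{-C'}$ since $|H|\le\ln n/\kappa^2$, and on this event $\sum_{i\in H}z_iy_i\ge c_0\|\bz_H\|_1\ge c_0\|\bz_H\|_2\ge c_0 n^{1/p-1/2}/\sqrt{2}$, which beats the target once $\alpha$ is chosen small enough.

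The main obstacle is replacing the classical Berry--Esseen error $O(1/\sqrt{\ln n})$ in the light-part analysis by a genuinely polynomial-in-$n$ error, so that the Gaussian lower tail at the $\sqrt{\ln n}$-standard-deviation scale is not drowned out; this is where moderate-deviation inequalities (valid for sums of independent sub-Gaussians whose individual summands are small compared with the total $\ell_2$-norm) are essential, and calibrating $\kappa$ against both the moderate-deviation error in the light case and the sign-alignment probability in the heavy case is the most delicate part. Once the numerator estimate is established, intersecting it with the denominator event yields
\[
\frac{\bz^{\T}\by}{\|\by\|_p}\ge\frac{\alpha\sqrt{\ln n}\,n^{1/p-1/2}}{C_1 n^{1/p}}=\sqrt{\frac{\delta_0\ln n}{n}}
\]
on an event of probability at least $\beta/n^{\gamma}-C_2/n\ge \delta_1/n^{\delta_2}$ for appropriate universal constants $\delta_0,\delta_1,\delta_2>0$, completing the proof.
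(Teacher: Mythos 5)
The paper does not prove this lemma: it is quoted from Khot and Naor (their Lemma~3.3), so there is no in-text argument to compare your proof against. Your blind attempt outlines a reasonable strategy, but the heavy-part branch contains a genuine order-of-magnitude gap. You argue that on the sign-alignment event $\{\sign(y_i)=\sign(z_i),\ |y_i|\ge c_0,\ i\in H\}$ one gets $\sum_{i\in H} z_i y_i\ge c_0\|\bz_H\|_1\ge c_0\|\bz_H\|_2\ge c_0 n^{1/p-1/2}/\sqrt2$, and conclude this ``beats the target once $\alpha$ is chosen small enough.'' But the target is $\alpha\sqrt{\ln n}\cdot n^{1/p-1/2}$, which contains an explicit $\sqrt{\ln n}$ factor; since $\alpha,c_0>0$ must be universal constants while $\sqrt{\ln n}\to\infty$, no choice of $\alpha$ makes $c_0/\sqrt2\ge\alpha\sqrt{\ln n}$ for all $n$. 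The shortfall is realized, for example, by $\bz$ with $z_1=a$, $z_i=b$ for $i\ge2$, where $a=b\sqrt{n-1}$ and $a^q+(n-1)b^q=1$, so $a\approx n^{1/p-1/2}$, $b\approx n^{-1/q}$, coordinate $1$ is heavy, $\|\bz_H\|_2^2=a^2=\tfrac12\|\bz\|_2^2$ (so the heavy branch is selected), yet $\|\bz_H\|_1=a=\Theta(n^{1/p-1/2})$, which is a factor $\sqrt{\ln n}$ short of the threshold. For this $\bz$ it is the light sum $S_L=b\sum_{i\ge2}y_i$ (with Lyapunov ratio $\Theta(n^{-1/2})$, comfortably within the moderate-deviation range) that actually carries the fluctuation, but your $\ell_2$-mass dichotomy does not route the vector there.

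Repairing this does not appear to be a local patch: one needs either a different split criterion, or to combine both halves (letting a moderate-deviation estimate on $S_L$ make up whatever the sign-aligned heavy gain leaves uncovered, where now the reduced target must be traded off against the possibly small $\|\bz_L\|_2$), or an exponential-tilting lower bound that directly exploits the $\ell_q$-structure of $\bz$ and the $|s|^q$-growth of the log-moment-generating function of $y_1$. A secondary caveat concerns your light-part step: at level $x\sim\sqrt{\ln n}$ one has $x^3\|\ba\|_3^3\sim\kappa\ln n$, which does not tend to zero, so the classical $(1+o(1))$-form of Cram\'er's moderate-deviation theorem is not applicable. What is needed is a version with an explicit multiplicative error of the form $\exp(-Cx^3\|\ba\|_3^3)=n^{-C\kappa}$; such bounds exist, but this should be stated and cited precisely, since it is the crux of your light-case estimate.
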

\li{However, this result is certainly not enough and even does not guarantee the existence of a hitting set with the required hitting ratio. Moreover, unlike the $\ell_2$-sphere where the closeness can be transferred meaning that if $\bx$ is close to $\by$ and $\by$ is close to $\bz$ then $\bx$ is close to $\bz$ as they all sit on the same sphere, the non-self-duality of the $\ell_p$-sphere creates problems again. To overcome the difficulty, we first need to create a grid on the $\ell_q$-sphere that is close enough to the same $\ell_q$-sphere and then construct a randomized hitting set on the $\ell_p$-sphere that is close to the grid on the $\ell_q$-sphere. To begin with, we first settle a grid on the $\ell_q$-sphere with a desired property.}
\begin{lemma}\label{lma:lq-enet}
    For any $p\in[1,\infty]$ and $m,n\in\BN$, one has $|\BH^n_G(m)|\le(2m+1)^n$ where
    $$
    \BH^n_G(m):=\left\{\frac{\bw}{\|\bw\|_p}\in\BS_p^n:\bw\neq\bd{0},\, w_i\in\left\{0,\pm\frac{1}{m},\pm\frac{2}{m},\dots,\pm 1\right\}\text{ for } i=1,2,\dots, n\right\}.
    $$
    For any $\bx\in\BS_p^n$, there exists $\by\in\BH^n_G(m)$ such that for any $\bz\in\R^n$,
    \begin{align*}
        \by^\T\bz-\frac{\|\bz\|_1}{m} &\le \bx^\T\bz \le \left(1+\frac{n^{1/p}}{m}\right)\by^\T\bz+\frac{\|\bz\|_1}{m} &\text{if }\by^\T\bz\ge 0,\\
        \left(1+\frac{n^{1/p}}{m}\right)\by^\T\bz-\frac{\|\bz\|_1}{m}&\le\bx^\T\bz\le \by^\T\bz+\frac{\|\bz\|_1}{m} &\text{if }\by^\T\bz< 0.
    \end{align*}
\end{lemma}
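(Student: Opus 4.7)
The first claim, $|\BH_G^n(m)|\le(2m+1)^n$, is immediate: each of the $n$ entries of the pre-normalized vector $\bw$ takes at most $2m+1$ values, and the mapping $\bw\mapsto\bw/\|\bw\|_p$ can only decrease the count.

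For the approximation claim, the plan is to construct $\by$ explicitly via \emph{round-up-in-magnitude} rounding. Given $\bx\in\BS_p^n$, set $w_i:=\sign(x_i)\,\lceil m|x_i|\rceil/m$ and $\by:=\bw/\|\bw\|_p$. First I would verify admissibility: since $\|\bx\|_p=1$ forces $|x_i|\le 1$ for every $i$, we have $\lceil m|x_i|\rceil\in\{0,1,\dots,m\}$, so $w_i\in\{0,\pm 1/m,\dots,\pm 1\}$, and $\bw\neq\bd{0}$ because $|w_i|\ge|x_i|$ entrywise while $\bx\neq\bd{0}$. Hence $\by\in\BH_G^n(m)$.

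Next I would collect three elementary bounds. From the definition of rounding, $\|\bx-\bw\|_\infty\le 1/m$, so by Lemma~\ref{lma:lp-norm-equiv} applied with $r=p$ (and the outer $p$ there taken as $\infty$), $\|\bw-\bx\|_p\le n^{1/p}/m$. From the entrywise inequality $|w_i|\ge|x_i|$ we get $\|\bw\|_p\ge\|\bx\|_p=1$, while the triangle inequality gives $\|\bw\|_p\le\|\bx\|_p+\|\bw-\bx\|_p\le 1+n^{1/p}/m$. The four target inequalities then follow from the identity
\[
\bx^\T\bz=(\bx-\bw)^\T\bz+\|\bw\|_p\,\by^\T\bz,
\]
together with the H\"{o}lder estimate $|(\bx-\bw)^\T\bz|\le\|\bx-\bw\|_\infty\|\bz\|_1\le\|\bz\|_1/m$. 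Splitting on the sign of $\by^\T\bz$, in each of the four inequalities exactly one of the bounds $\|\bw\|_p\ge 1$ or $\|\bw\|_p\le 1+n^{1/p}/m$ is invoked so that $(\|\bw\|_p-c)\by^\T\bz$ has the correct sign; for example, the lower bound with $\by^\T\bz\ge 0$ uses $\|\bw\|_p\ge 1$, whereas the lower bound with $\by^\T\bz<0$ uses $\|\bw\|_p\le 1+n^{1/p}/m$ since multiplying a negative number by a larger positive scalar makes it smaller.

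The main subtle point, and arguably the only nontrivial design choice, is the rounding rule. Standard nearest-multiple rounding would allow $\|\bw\|_p<1$, which breaks the lower bound $\by^\T\bz-\|\bz\|_1/m\le\bx^\T\bz$ when $\by^\T\bz\ge 0$, while na\"{i}ve round-up without the magnitude convention could push $|w_i|$ above $1$ and outside the grid. Rounding up in magnitude is precisely what guarantees $\|\bw\|_p\ge 1$ automatically while the preservation $|x_i|\le 1$ keeps $w_i\in\{0,\pm 1/m,\dots,\pm 1\}$. Once this rounding is identified, the remainder of the argument is bookkeeping via norm equivalence and the triangle inequality.
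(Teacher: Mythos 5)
Your proposal is correct and follows essentially the same approach as the paper: you use exactly the same round-up-in-magnitude rounding rule, the same pair of bounds $1\le\|\bw\|_p\le 1+n^{1/p}/m$, the same H\"{o}lder estimate $|(\bx-\bw)^\T\bz|\le\|\bz\|_1/m$, and the same four-way case split on the sign of $\by^\T\bz$. The only cosmetic difference is that you obtain the upper bound on $\|\bw\|_p$ via $\|\bw-\bx\|_p\le n^{1/p}\|\bw-\bx\|_\infty$ and the triangle inequality, while the paper uses the entrywise estimate $|\bw|\le|\bx|+\frac{1}{m}\bd{1}$ directly.
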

\begin{proof}
    $|\BH^n_G(m)|\le(2m+1)^n$ obviously holds from its definition. For any $\bx\in\BS_p^n$, we choose $w_i\in\{0,\pm\frac{1}{m},\pm\frac{2}{m},\dots,\pm 1\}$ to be the closest to $x_i$ satisfying $|x_i|\le|w_i|$, i.e., for $i=1,2,\dots,n$
    $$
    w_i=\begin{cases}
        \frac{\lceil m x_i\rceil}{m} & \text{if } x_i\ge 0 \\
        \frac{\lfloor m x_i\rfloor}{m} &  \text{if } x_i< 0.
    \end{cases}
    $$
    By noticing that $0\le |w_i|-|x_i|\le \frac{1}{m}$, one has 
    \begin{equation}\label{eq:bound-y}
    1=\|\bx\|_p\le\|\bw\|_p=\left\||\bw|\right\|_p\le\left\||\bx|+\frac{1}{m}\bd{1}\right\|_p\le\left\||\bx|\right\|_p+\frac{1}{m}\left\|\bd{1}\right\|_p=1+\frac{n^{1/p}}{m}.
    \end{equation}
    Since $|w_i-x_i|\le \frac{1}{m}$, for any $\bz\in\R^n$,
    $$
    \left|\bw^\T\bz-\bx^\T\bz\right| \le \sum_{i=1}^n |w_i-x_i|\cdot |z_i| \le \sum_{i=1}^n \frac{1}{m} |z_i| = \frac{\|\bz\|_1}{m}.
    $$
    Let $\by=\bw/\|\bw\|_p\in\BS_p^n$ and the above leads to
    $$
    \|\bw\|_p\by^\T\bz-\frac{\|\bz\|_1}{m}\le\bx^\T\bz\le \|\bw\|_p\by^\T\bz+\frac{\|\bz\|_1}{m}.
    $$
    The desired bounds of $\bx^\T\bz$ can be obtained immediately based on the sign of $\by^\T\bz$ by noticing the lower and upper bounds of $\|\bw\|_p$ in~\eqref{eq:bound-y}.
\end{proof}

\li{We are ready to present randomized $\Omega(\sqrt{\ln{n}/{n}})$-hitting sets of $\BS_p^n$ by putting all the pieces together and carefully applying many probabilistic techniques.} To simplify the language, we call the vector $\by/\|\by\|_p\in\BS^n_p$ to be the even distribution on $\ell_p$-sphere, where the entries of $\by\in\R^n$ are i.i.d.\ random variables with the probability density function \li{$p e^{-|x|^p}/\int_0^{\infty} 2t^{\frac{1-p}{p}} e^{-t} dt$}. 
\begin{theorem}\label{thm:rand-hitting-set}
For any $p\in(2,\infty)$ and $\epsilon\in(0,1)$, there exist universal constants $\delta_0,\delta_2,\delta_3>0$, such that
$$
\BH_{3}^n(\epsilon):=\left\{\bz_i \text{ is i.i.d.\ even on $\BS_p^n$ for }i=1,2,\dots, \left\lceil\delta_3n^{\delta_2}\left( \left(\frac{1}{2}+\frac{1}{q}\right)n \ln n+\ln \frac{1}{\epsilon}\right)\right\rceil \right\}
$$
satisfies
$$\Prob\left\{\BH_{3}^n(\epsilon)\in\BT_p^n\left(\sqrt{\frac{\delta_0 \ln{n}}{2n}}, \left\lceil\delta_3n^{\delta_2}\left( \left(\frac{1}{2}+\frac{1}{q}\right)n \ln n+\ln \frac{1}{\epsilon}\right)\right\rceil\right)\right\}\ge1-\epsilon.
$$
\end{theorem}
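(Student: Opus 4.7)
The plan is a union-bound discretization argument on the dual sphere $\BS^n_q$ that leverages Lemma~\ref{lma:lp-sampling} for the per-point success probability, Lemma~\ref{lma:lq-enet} for the net construction, and Lemma~\ref{lma:lp-norm-equiv} to control the approximation error. First I would apply Lemma~\ref{lma:lq-enet} with $p$ replaced by $q$ to obtain a grid $\BH^n_G(m)\subseteq \BS^n_q$ of cardinality at most $(2m+1)^n$ for a parameter $m$ to be selected. By Lemma~\ref{lma:lp-sampling}, for any fixed $\by\in\BH^n_G(m)$ each i.i.d.\ sample $\bz_i$ satisfies $\by^\T\bz_i \ge \sqrt{\delta_0\ln n/n}$ with probability at least $\delta_1/n^{\delta_2}$, so the probability that no sample among $N$ covers $\by$ at this stronger ratio is at most $(1-\delta_1/n^{\delta_2})^N \le \exp(-N\delta_1/n^{\delta_2})$. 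A union bound over $\BH^n_G(m)$ gives the overall failure bound $(2m+1)^n \exp(-N\delta_1/n^{\delta_2})$.

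Next I would transfer the covering from the grid to all of $\BS^n_q$. For any $\bx\in\BS^n_q$, Lemma~\ref{lma:lq-enet} furnishes $\by\in\BH^n_G(m)$ with $\bx^\T\bz \ge \by^\T\bz - \|\bz\|_1/m$ whenever $\by^\T\bz \ge 0$. If the union-bound event does not occur, there is an index $i$ with $\by^\T\bz_i \ge \sqrt{\delta_0\ln n/n} > 0$; combined with $\|\bz_i\|_1 \le n^{1/q}\|\bz_i\|_p = n^{1/q}$ from Lemma~\ref{lma:lp-norm-equiv}, this yields
$$
\bx^\T\bz_i \;\ge\; \sqrt{\delta_0\ln n/n} - n^{1/q}/m.
$$
Choosing $m$ so that $n^{1/q}/m \le (1-1/\sqrt{2})\sqrt{\delta_0\ln n/n}$, equivalently $m = \Theta(n^{1/2+1/q}/\sqrt{\ln n})$, makes $\bx^\T\bz_i \ge \sqrt{\delta_0\ln n/(2n)}$, the desired hitting ratio. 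With this choice $\ln(2m+1) \le (1/2+1/q)\ln n + O(1)$, so setting $N = \lceil (n^{\delta_2}/\delta_1)(n\ln(2m+1) + \ln(1/\epsilon)) \rceil$ drives the failure probability below $\epsilon$. Absorbing the $O(n)$ lower-order term and $1/\delta_1$ into a new universal constant $\delta_3$ yields the stated cardinality $\lceil \delta_3 n^{\delta_2}((1/2+1/q)n\ln n + \ln(1/\epsilon))\rceil$.

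The main obstacle is the non-self-duality of the $\ell_p$-sphere: random samples live on $\BS^n_p$ but the set to be covered is $\BS^n_q$, and the net approximation error in Lemma~\ref{lma:lq-enet} is measured in the $\ell_1$-norm of $\bz_i$ rather than its $\ell_p$-norm. The resulting loss term $n^{1/q}/m$ forces the grid resolution to scale polynomially in $n$, which is precisely what injects the $(1/2+1/q) n\ln n$ factor into the cardinality. Balancing the grid cardinality, the per-point success probability from Lemma~\ref{lma:lp-sampling}, and the approximation loss $n^{1/q}/m$ so that everything dovetails with the stated form is the key bookkeeping step; everything else is a standard Chernoff-style union bound.
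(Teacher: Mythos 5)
Your proposal is correct and follows essentially the same route as the paper's proof: a union bound over the grid $\BH^n_G(m)\subseteq\BS_q^n$ using the per-point probability from Lemma~\ref{lma:lp-sampling}, then transferring the covering to all of $\BS_q^n$ via Lemma~\ref{lma:lq-enet} and the $\ell_1$-versus-$\ell_p$ comparison of Lemma~\ref{lma:lp-norm-equiv}, followed by tuning $m$ and the sample size. The only difference is in the final constant bookkeeping: the paper chooses a slightly larger $m\approx n^{\delta(1/2+1/q)}$ (with a uniform $\delta\ge1$ set so the required inequality on $2m+1$ holds for every $n\ge2$) so that $\ln(2m+1)\le\delta(1/2+1/q)\ln n$ exactly and $\delta_3=\delta/\delta_1$ falls out directly, whereas you take the minimal $m\approx n^{1/2+1/q}/\sqrt{\ln n}$ and absorb the resulting $O(n)$ term into $\delta_3$ — both are valid.
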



\begin{proof}
The proof consists two parts. We first provide sufficient conditions for $\BH_3^n(\epsilon)$ to cover $\BS^n_q$ with high probability by using $\BH^n_G(m)$ as an intermediary. We then analyze the parameters to make sure the probability lower bound and the upper bound of $|\BH_{3}^n(\epsilon)|$ are achieved uniformly.

First, we show that $\BH_3^n(\epsilon)$ is close enough to $\BH^n_G(m)$ with high probability, i.e.,
    \begin{equation}\label{eq:halfway}
    \Prob\left\{\min_{\by\in\BH^n_G(m)}\max_{\bz\in\BH_{3}^n(\epsilon)}\bz^\T\by \ge \sqrt{\frac{\delta_0 \ln{n}}{n}}\right\} \ge 1-\epsilon.
    \end{equation}
    To see why, we have that the probability of its complement
    \begin{align*}
    \Prob\left\{\min_{\by\in\BH^n_G(m)}\max_{\bz\in\BH_{3}^n(\epsilon)}\bz^\T\by<\sqrt{\frac{\delta_0 \ln{n}}{n}}\right\}
    &=\Prob\left\{\bigcup_{\by\in\BH^n_G(m)}\bigcap_{\bz\in\BH_{3}^n(\epsilon)}\left\{\bz^\T\by<\sqrt{\frac{\delta_0 \ln{n}}{n}}\right\}\right\}\\
    &\le \sum_{\by\in\BH^n_G(m)}\Prob\left\{\bigcap_{\bz\in\BH_{3}^n(\epsilon)}\left\{\bz^\T\by<\sqrt{\frac{\delta_0 \ln{n}}{n}}\right\}\right\}\\
    &= \sum_{\by\in\BH^n_G(m)}\prod_{\bz\in\BH_{3}^n(\epsilon)}\Prob\left\{\bz^\T\by<\sqrt{\frac{\delta_0 \ln{n}}{n}}\right\}
    \\&\le (2m+1)^n \left(1-\frac{\delta_1}{n^{\delta_2}}\right)^{|\BH_{3}^n(\epsilon)|},
    \end{align*}
    where the first inequality follows from the subadditivity of probability measure, the second equality is due to the independence of $\bz$'s, and the last inequality follows from Lemma~\ref{lma:lq-enet} and Lemma~\ref{lma:lp-sampling}. To ensure the upper bound $(2m+1)^n (1-\frac{\delta_1}{n^{\delta_2}})^{|\BH_{3}^n(\epsilon)|}\le\epsilon$, it is not difficult to verify that it suffices to have 
    \begin{equation}\label{eq:bh3card}
    |\BH_{3}^n(\epsilon)|\ge\frac{n^{\delta_2}}{\delta_1}\left(n\ln{(2m+1)}+\ln{\frac{1}{\epsilon}}\right).
    \end{equation}
    Therefore, for any $\bx\in\BS_q^n$, there exists $\by\in\BH^n_G(m)$ by Lemma~\ref{lma:lq-enet}, and then there exists $\bz\in\BH_{3}^n(\epsilon)$ with probability at least $1-\epsilon$ by~\eqref{eq:halfway}, such that
    $$
    \bz^\T\bx\ge \bz^\T\by - \frac{\|\bz\|_1}{m} \ge \sqrt{\frac{\delta_0 \ln{n}}{n}} - \frac{n^{1/q}\|\bz\|_p}{m} =\sqrt{\frac{\delta_0 \ln{n}}{n}} - \frac{n^{1/q}}{m},
    $$
    where the second inequality is due to the bounds between $\ell_p$-norms (Lemma~\ref{lma:lp-norm-equiv}). To ensure the lower bound $\sqrt{\frac{\delta_0 \ln{n}}{n}} - \frac{n^{1/q}}{m}\ge \sqrt{\frac{\delta_0 \ln{n}}{2n}}$, the hitting ratio, we need to have $\frac{n^{1/q}}{m} \le (\sqrt{2}-1)\sqrt{\frac{\delta_0 \ln{n}}{2n}}$. In a word, it suffices to have
    \begin{equation}\label{eq:bh3lower}
     2m+1\ge\sqrt{\frac{47}{\delta_0 \ln{n}}}n^{\frac{1}{q} + \frac{1}{2}} + 1.
    \end{equation}

    Let us carefully define $m$ for every $n$ in order to satisfy both~\eqref{eq:bh3card} and~\eqref{eq:bh3lower}. Let $n_0\ge2$ be the smallest one such that $n\ge \sqrt{\frac{47}{\delta_0 \ln{n}}}n + 3$ for all $n\ge n_0$ and this $n_0$ depends only on $\delta_0$. We then let $\delta\ge1$ be the smallest one such that $n^{\delta}\ge \sqrt{\frac{47}{\delta_0 \ln{n}}}n + 3$ for all $2\le n\le n_0$ and this $\delta$ also depends only on $\delta_0$. Therefore, we uniformly have $n^{\delta}\ge \sqrt{\frac{47}{\delta_0 \ln{n}}}n + 3$ for any $n\ge2$. Since $\delta\ge1$ and $\frac{1}{q}-\frac{1}{2}\ge0$, one has
    $$n^{(\frac{1}{q} + \frac{1}{2})\delta} = n^{(\frac{1}{q} - \frac{1}{2})\delta}  n^\delta \ge n^{\frac{1}{q} - \frac{1}{2}}\left( \sqrt{\frac{47}{\delta_0 \ln{n}}}n + 3\right) \ge  \sqrt{\frac{47}{\delta_0 \ln{n}}}n^{\frac{1}{q} + \frac{1}{2}} + 3.$$
    To summarize, if we define $m=\lfloor\frac{1}{2}(n^{\delta/q+\delta/2}-1)\rfloor$, then 
    $$ n^{(\frac{1}{q} + \frac{1}{2})\delta} \ge 2m+1 \ge n^{(\frac{1}{q} + \frac{1}{2})\delta} -2
      \ge\sqrt{\frac{47}{\delta_0 \ln{n}}}n^{\frac{1}{q} + \frac{1}{2}} + 1.
    $$
    This shows that~\eqref{eq:bh3lower} holds for any $n\ge2$. In the meantime, if we let $\delta_3=\delta/\delta_1$ that depends only on $\delta_0$ and $\delta_1$ and is thus universal, we have 
    $$
    |\BH_{3}^n(\epsilon)| 
    \ge \frac{\delta}{\delta_1} n^{\delta_2}\left( \left(\frac{1}{2}+\frac{1}{q}\right)n \ln n+\ln \frac{1}{\epsilon}\right)
    = \frac{n^{\delta_2}}{\delta_1}\left(\delta n\ln n^{\frac{1}{2}+\frac{1}{q}}+\delta\ln{\frac{1}{\epsilon}}\right)
    \ge \frac{n^{\delta_2}}{\delta_1}\left(n\ln{(2m+1)}+\ln{\frac{1}{\epsilon}}\right),
    $$
    ensuring the validity of~\eqref{eq:bh3card}.
    %
\end{proof}


Theorem~\ref{thm:rand-hitting-set} not only provides a simple construction via randomization but also trivially implies the existence of $\Omega(\sqrt{\ln n/n})$-hitting sets of $\BS_p^n$. However, we are currently unable to explicitly construct a deterministic one when $p\in(2,\infty)$. As we shall see in Section~\ref{sec:algorithms}, a deterministic $\Omega(\sqrt{\ln n/n})$-hitting set can be used to improve the best-known approximation bound by a deterministic polynomial-time algorithm for both the tensor spectral $p$-norm and nuclear $p$-norm.

We remark that the hitting ratio $\Omega(\sqrt{\ln n/n})$ for $\BS_p^n$ is the largest possible by a deterministic hitting set with polynomial cardinality when $p\in[2,\infty)$; see~\cite[Theorem~3.2]{brieden1998approximation}. Therefore, with the $\Omega(\sqrt[q]{\ln n/n})$-hitting set $\BH_{1}^n(\alpha,\beta)$ that attains the largest hitting ratio with polynomial cardinality when $p\in(1,2]$, this section almost provides a complete story under the deterministic framework, except explicit construction of deterministic $\Omega(\sqrt{\ln n/n})$-hitting sets when $p\in(2,\infty)$.

\section{Approximating tensor nuclear $p$-norm}\label{sec:algorithms}

As an application of the results in Section~\ref{sec:matrix-p-norm} and Section~\ref{sec:hitting-sets}, this section is devoted to the design and analysis of polynomial-time algorithms to approximate the tensor nuclear $p$-norm. As mentioned in the introduction, such results are almost blank in the literature mainly because of the lack of good approximation of the matrix nuclear $p$-norm. Armed with the $\Omega(1)$-approximation bound of the matrix nuclear $p$-norm developed in Section~\ref{sec:matrix-p-norm}, we provide an overview of approximating the tensor nuclear $p$-norms using existing tools as well as what we have developed in Section~\ref{sec:hitting-sets}, from a basic approach to the best approximation. Here in this section, we consider the tensor space $\R^{n_1\times n_2\times \dots \times n_d}$ of order $d \ge 3$ and assume without loss of generality that $2 \leq n_1 \leq n_2 \leq \dots \leq n_d$. 

Before discussing the tensor nuclear $p$-norm, let us first propose very simple algorithms to approximate the tensor spectral $p$-norm, as an immediate application of $\ell_p$-sphere covering in Section~\ref{sec:hitting-sets}.

\begin{algorithm}[!h]
\begin{algorithmic}[1]
\REQUIRE A tensor $\TT\in\R^{n_1 \times n_2 \times \dots \times n_d}$, a constant $p\in\BQ\cap(2,\infty)$ and $d-2$ hitting sets $\BH^{n_k}\in\BT^{n_k}_p(\tau_k, O({n_k}^{\alpha_k}))$ for $k=1,2,\dots,d-2$.
\ENSURE An approximation of $\|\TT\|_{p_\sigma}$.
\STATE Applying~\eqref{cor:final-spectral-p-norm} to compute
\begin{equation*}
u =\max\left\{\left\|\TT\times_1\bx_1\dots\times_{d-2}\bx_{d-2}\right\|_{p_v} :
\bx_k\in \BH^{n_k},\,k=1,2,\dots, d-2\right\};
\end{equation*}
\RETURN $u/\delta_G$.
\end{algorithmic}
\caption{Approximating the tensor spectral $p$-norm based on $\ell_p$-sphere covering}
\label{alg:alg0}
\end{algorithm}

\begin{theorem}\label{thm:snorm}
For any $\TT\in\R^{n_1 \times n_2 \dots \times n_d}$, $p\in \BQ\cap(2,\infty)$ and hitting sets $\BH^{n_k}\in\BT^{n_k}_p(\tau_k, O({n_k}^{\alpha_k}))$ for $k=1,2,\dots,d-2$,
Algorithm~\ref{alg:alg0} is a deterministic polynomial-time algorithm whose output $\operatorname{Alg}_{\,\ref{alg:alg0}}(\|\TT\|_{p_\sigma})$ satisfies
$$
\left(\frac{1}{\delta_G}\prod_{k=1}^{d-2}\tau_k\right)\|\TT\|_{p_\sigma}\le \operatorname{Alg}_{\,\ref{alg:alg0}}(\|\TT\|_{p_\sigma}) \le \|\TT\|_{p_\sigma}.
$$
\end{theorem}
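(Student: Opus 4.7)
The plan is to establish the upper and lower inequalities separately, and then argue polynomial running time.

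The upper inequality $\operatorname{Alg}_{\,\ref{alg:alg0}}(\|\TT\|_{p_\sigma}) \le \|\TT\|_{p_\sigma}$ is the straightforward direction. For any hitting-set vectors $\bx_k \in \BH^{n_k} \subseteq \BS_p^{n_k}$, I would apply Lemma~\ref{thm:contraction} iteratively $d-2$ times to deduce $\|\TT \times_1 \bx_1 \dots \times_{d-2} \bx_{d-2}\|_{p_\sigma} \le \|\TT\|_{p_\sigma}$. The right-hand inequality of Lemma~\ref{lma:vecp}, namely $\|M\|_{p_v} \le \delta_G \|M\|_{p_\sigma}$ for matrices $M$, then bounds every quantity inside the maximum defining $u$ by $\delta_G \|\TT\|_{p_\sigma}$; dividing by $\delta_G$ delivers the upper inequality.

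For the lower inequality, I plan to greedily construct vectors $\bv_k \in \BH^{n_k}$ by induction on $k$ satisfying
\[\|\TT \times_1 \bv_1 \times_2 \bv_2 \dots \times_k \bv_k\|_{p_\sigma} \ge \prod_{j=1}^k \tau_j \cdot \|\TT\|_{p_\sigma}.\]
At the inductive step, denote by $\mathcal{S}$ the current order-$(d-k)$ tensor and fix an optimizer $(\by_1^\sharp, \dots, \by_{d-k}^\sharp)$ of $\|\mathcal{S}\|_{p_\sigma}$. Contracting $\mathcal{S}$ against all components except the first, I form the vector $\bw := \mathcal{S} \times_2 \by_2^\sharp \times_3 \by_3^\sharp \dots \times_{d-k} \by_{d-k}^\sharp \in \R^{n_{k+1}}$. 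The key observation will be $\|\bw\|_q = \|\mathcal{S}\|_{p_\sigma}$: by H\"{o}lder's inequality one has $\max_{\|\by\|_p = 1} \by^\T \bw = \|\bw\|_q$, and this maximum equals $\|\mathcal{S}\|_{p_\sigma}$ because the joint optimizer already saturates this inner maximum. Thus $\bw/\|\bw\|_q \in \BS_q^{n_{k+1}}$, and the defining property of the hitting set $\BH^{n_{k+1}}$ produces $\bv_{k+1}$ with $\bv_{k+1}^\T \bw \ge \tau_{k+1} \|\mathcal{S}\|_{p_\sigma}$. Reinterpreting $\bv_{k+1}^\T \bw$ as a multilinear evaluation of $\mathcal{S} \times_1 \bv_{k+1}$ against the remaining components $(\by_2^\sharp, \dots, \by_{d-k}^\sharp)$ (which are feasible but not necessarily optimal for this contracted tensor) advances the induction. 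After $d-2$ iterations the residual object is a matrix, and the left-hand inequality of Lemma~\ref{lma:vecp}, $\|M\|_{p_\sigma} \le \|M\|_{p_v}$, upgrades the accumulated spectral bound to a $\|\cdot\|_{p_v}$ bound. Since $u$ is the maximum over all combinations in the hitting sets, $u$ dominates this specific witness value.

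Polynomial time is routine: the outer enumeration has size $\prod_{k=1}^{d-2} O(n_k^{\alpha_k})$, and each inner evaluation of $\|\cdot\|_{p_v}$ is polynomial via the conic reformulation~\eqref{cor:final-spectral-p-norm} from Corollary~\ref{lma:dual-SDP}. I view the main (though minor) obstacle as cleanly establishing $\|\bw\|_q = \|\mathcal{S}\|_{p_\sigma}$ at each inductive step; once this is verified, every other step follows from Lemma~\ref{thm:contraction}, Lemma~\ref{lma:vecp}, and the defining hitting-set property.
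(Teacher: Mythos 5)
Your proof is correct and reaches the statement by a route that parallels but does not coincide with the paper's. The upper-bound direction is identical: Lemma~\ref{thm:contraction} applied $d-2$ times plus the inequality $\|M\|_{p_v}\le\delta_G\|M\|_{p_\sigma}$ from Lemma~\ref{lma:vecp}. The lower-bound direction is where you diverge. You maintain the invariant $\|\TT\times_1\bv_1\cdots\times_k\bv_k\|_{p_\sigma}\ge\bigl(\prod_{j\le k}\tau_j\bigr)\|\TT\|_{p_\sigma}$ by \emph{re-optimizing} the contracted tensor at every step: at step $k$ you pick a fresh optimizer $(\by_1^\sharp,\dots,\by_{d-k}^\sharp)$ of $\mathcal{S}$, form $\bw=\mathcal{S}\times_2\by_2^\sharp\cdots$, establish $\|\bw\|_q=\|\mathcal{S}\|_{p_\sigma}$, invoke the hitting-set property against $\bw/\|\bw\|_q$, and upgrade the resulting multilinear estimate back to a spectral-norm estimate by maximality. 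The paper instead fixes a \emph{single} global optimizer $(\by_1,\dots,\by_d)$ of $\|\TT\|_{p_\sigma}$ at the outset and tracks only the multilinear-form value $\langle\TT,\bz_1\otimes\cdots\otimes\bz_k\otimes\by_{k+1}\otimes\cdots\otimes\by_d\rangle$; at step $k$ it applies the hitting-set property to $\bv_k=\TT\times_1\bz_1\cdots\times_{k-1}\bz_{k-1}\times_{k+1}\by_{k+1}\cdots\times_d\by_d$ and needs only the one-sided H\"{o}lder inequality $\|\bv_k\|_q\ge\by_k^\T\bv_k$, never the equality you identify as your main obstacle. Your stronger invariant and the equality $\|\bw\|_q=\|\mathcal{S}\|_{p_\sigma}$ are both correct (the equality follows because $(\by_1^\sharp)^\T\bw$ attains $\|\mathcal{S}\|_{p_\sigma}$ while $\|\bw\|_q$ is the restricted maximum over mode one only), so your argument goes through; the paper's version simply avoids having to establish it, and also explicitly treats the degenerate case $\|\bv_k\|_q=0$, which your sketch should also address (when $\bw=\bd{0}$ the hitting-set property cannot be applied to $\bw/\|\bw\|_q$, but the inequality holds trivially since both sides vanish).
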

\begin{proof}
Denote $(\by_1,\by_2,\dots,\by_d)$ to be an optimal solution of~\eqref{def:spectral}, i.e., $\langle\TT,\by_1\otimes\by_2\otimes \dots\otimes\by_d\rangle=\|\TT\|_{p_\sigma}$ with $\|\by_k\|_p=1$ for $k=1,2,\dots,d$.
For the vector $\bv_1=\TT\times_2\by_2\dots\times_{d}\by_{d}\in\R^{n_1}$, either $\|\bv_1\|_q=0$ or there exists $\bz_1\in\BH^{n_1}$ such that $\bz_1^{\T}\bv_1/\|\bv_1\|_q\ge\tau_1$. In any case, one has $\bz_1^{\T} \bv_1 \ge \tau_1 \|\bv_1\|_q$ and
\[
\langle\TT,\bz_1\otimes\by_2\otimes \dots\otimes\by_d\rangle= \bz_1^{\T} \bv_1 \ge \tau_1 \|\bv_1\|_q
\ge \tau_1 \by_1^{\T}\bv_1=\tau_1 \langle\TT,\by_1\otimes\by_2\otimes \dots\otimes\by_d\rangle,
\]
where the last inequality follows from H\"{o}lder's inequality and $\|\by_1\|_p=1$. Similarly, for every $k=2,3\dots,d-2$ that are chosen one by one increasingly, there exists $\bz_k\in\BH^{n_k}$ such that
\[
\langle \TT, \bz_1\otimes\dots\otimes\bz_k\otimes\by_{k+1}\otimes\dots\otimes\by_{d}\rangle = \bz_k^{\T} \bv_k \ge \tau_k \|\bv_k\|_q 
\ge \tau_k \by_k^{\T}\bv_k= \tau_k \langle \TT, \bz_1\otimes\dots\otimes\bz_{k-1}\otimes\by_{k}\otimes\dots\otimes\by_{d}\rangle,
\]
where $\bv_k=\TT\times_1\bz_1\dots\times_{k-1}\bz_{k-1}\times_{k+1}\by_{k+1}\dots\times_d\by_{d}\in\R^{n_k}$. By applying the above inequalities recursively, one has
\[
\langle \TT, \bz_1\otimes\dots\otimes\bz_{d-2}\otimes\by_{d-1}\otimes\by_{d}\rangle\ge \left(\prod_{k=1}^{d-2}\tau_k\right) \langle\TT,\by_1\otimes\by_2\otimes \dots\otimes\by_d\rangle  = \left(\prod_{k=1}^{d-2}\tau_k\right) \|\TT\|_{p_\sigma}.
\]
Therefore, by Lemma~\ref{lma:vecp}, the $u$ in Algorithm~\ref{alg:alg0} satisfies
\begin{align*}
u& \ge \left\|\TT\times_1\bz_1\dots\times_{d-2}\bz_{d-2}\right\|_{p_v}\\
&\ge \left\|\TT\times_1\bz_1\dots\times_{d-2}\bz_{d-2}\right\|_{p_\sigma} \\
&= \max\left\{\left\langle\TT\times_1\bz_1\dots\times_{d-2}\bz_{d-2},\bx_{d-1}\otimes\bx_{\limath{d}} \right\rangle: \|\bx_{d-1}\|_p=\|\bx_{\limath{d}}\|_p=1\right\} \\
&\ge\langle \TT, \bz_1\otimes\dots\otimes\bz_{d-2}\otimes\by_{d-1}\otimes\by_{d}\rangle \\
&\ge \left(\prod_{k=1}^{d-2}\tau_k\right) \|\TT\|_{p_\sigma},
\end{align*}
implying that $u/\delta_G\ge (\prod_{k=1}^{d-2}\tau_k) \|\TT\|_{p_\sigma}/\delta_G$. On the other hand, one also has
\begin{align*}
\frac{u}{\delta_G}& = \max\left\{\frac{1}{\delta_G}\left\|\TT\times_1\bx_1\dots\times_{d-2}\bx_{d-2}\right\|_{p_v}:
\bx_k\in \BH^{n_k},\,k=1,2,\dots, d-2\right\}\\
&\le \max\left\{\left\|\TT\times_1\bx_1\dots\times_{d-2}\bx_{d-2}\right\|_{p_\sigma}:
\bx_k\in \BH^{n_k},\,k=1,2,\dots, d-2\right\} \\
&\le  \|\TT\|_{p_\sigma},
\end{align*}
where the last inequality follows by applying Lemma~\ref{thm:contraction} $d-2$ times.
\end{proof}

By applying deterministic hitting sets $\BH_{2}^n(\alpha,\beta)$ with hitting ratio $\Omega(\sqrt[p]{\ln{n}}/\sqrt{n})$ to Algorithm~\ref{alg:alg0}, the tensor spectral $p$-norm can be approximated within a bound of $\Omega(\prod_{k=1}^{d-2} \sqrt[p]{\ln{n_k}}/\sqrt{n_k})$ and by applying randomized hitting sets $\BH_{3}^n(\epsilon)$ with hitting ratio $\Omega(\sqrt{\ln n/n})$, the approximation bound can be improved to 
$\Omega(\prod_{k=1}^{d-2} \sqrt{\ln{n_k}/n_k})$ when $p\in\BQ\cap(2,\infty)$. Both bounds are exactly the same to the best-known ones by Hou and So~\cite[Theorem~7]{hou2014hardness}. However, Algorithm~\ref{alg:alg0} is much simpler than that in~\cite{hou2014hardness}.

We remark that approximation of the tensor spectral $p$-norm serves a foundation to many $\ell_p$-sphere or $\ell_p$-ball constrained polynomial optimization problems~\cite{he2010approximation,so2011deterministic,hou2014hardness}. Hence, Algorithm~\ref{alg:alg0} provides a new tool to study approximation algorithms of these \li{problems. As an example, consider $\max_{\bx\in\BS_p^n}p(\bx)$ where $p(\bx)$ is a homogeneous polynomial function of degree $d$. By adopting the tensor relaxation approach~\cite{libook12}, i.e., relaxing the problem to $\max_{\bx_k\in\BS_p^n,\,k=1,2,\dots,d}\langle\TT,\bx_1\otimes\bx_2\otimes \dots\otimes\bx_d\rangle$ where $\TT$ is the symmetric tensor associated with the homogeneous polynomial $p(\bx)$, an approximate solution obtained in Theorem~\ref{thm:snorm} can be used to construct an approximate solution of the original problem $\max_{\bx\in\BS_p^n}p(\bx)$ via the so-called polarization formula introduced in~\cite[Lemma~1]{he2010approximation}. In particular, the following result elaborates the equivalence (in the approximation sense) between the $\ell_p$-sphere constrained homogeneous polynomial optimization and the tensor spectral $p$-norm.
\begin{lemma}[{\cite[Theorem~2]{hou2014hardness}}]\label{lma:equiv-hou}
    Let $p\in [2,\infty]$ and integer $d\ge3$ be given. Suppose there is a polynomial-time algorithm that approximates the spectral $p$-norm of any symmetric tensor $\TT$ of order $d$ within a factor of $\alpha\in(0,1]$.
    Then, there is a polynomial-time algorithm that approximates $\max_{\bx\in\BS_p^n}p(\bx)$ where $p(\bx)=\langle\TT,\bx\otimes \bx\otimes \dots\otimes\bx\rangle$ within a factor of $d!  d^{-d}\alpha$ when $d$ is odd. The approximation ratio is in a relative sense if $d$ is even.    
\end{lemma}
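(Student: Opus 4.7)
The plan is to apply the polarization formula for symmetric tensors from \cite[Lemma~1]{he2010approximation} to convert an approximate decomposition of the spectral $p$-norm into an approximate solution of the homogeneous polynomial optimization. Observe first that $\max_{\bx\in\BS_p^n}p(\bx)\le\|\TT\|_{p_\sigma}$, since the choice $\bx_1=\bx_2=\dots=\bx_d=\bx$ is feasible in the multilinear formulation~\eqref{def:spectral}. Running the given polynomial-time $\alpha$-approximation algorithm on the symmetric tensor $\TT$ then yields vectors $\bx_1,\bx_2,\dots,\bx_d\in\BS_p^n$ attaining $\TT(\bx_1,\bx_2,\dots,\bx_d)\ge\alpha\|\TT\|_{p_\sigma}\ge\alpha\max_{\bx\in\BS_p^n}p(\bx)$.

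The classical polarization identity for symmetric $d$-linear forms reads
\[
\TT(\bx_1,\bx_2,\dots,\bx_d)=\frac{1}{d!\,2^d}\sum_{\bd{\xi}\in\{-1,+1\}^d}\xi_1\xi_2\cdots\xi_d\cdot p\!\left(\sum_{k=1}^d\xi_k\bx_k\right).
\]
The right-hand side is an average of $2^d$ terms, so by pigeonhole there exists a sign pattern $\bd{\xi}^*\in\{-1,+1\}^d$ with
\[
\left|p\!\left(\sum_{k=1}^d\xi_k^*\bx_k\right)\right|\ge d!\cdot\TT(\bx_1,\bx_2,\dots,\bx_d),
\]
and such a $\bd{\xi}^*$ is found in constant time (for fixed $d$) by enumerating the $2^d$ patterns. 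Write $\by=\sum_{k=1}^d\xi_k^*\bx_k$; the triangle inequality gives $\|\by\|_p\le\sum_{k=1}^d\|\bx_k\|_p=d$, so $\bz=\by/\|\by\|_p$ is a well-defined point of $\BS_p^n$ (provided $\|\TT\|_{p_\sigma}>0$, else the problem is trivial), and degree-$d$ homogeneity of $p$ yields
\[
|p(\bz)|=\frac{|p(\by)|}{\|\by\|_p^d}\ge\frac{|p(\by)|}{d^d}\ge\frac{d!}{d^d}\TT(\bx_1,\bx_2,\dots,\bx_d)\ge\frac{d!}{d^d}\alpha\max_{\bx\in\BS_p^n}p(\bx).
\]

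When $d$ is odd, the decisive extra step is that $p(-\bz)=-p(\bz)$, so replacing $\bz$ by $-\bz$ if necessary produces the claimed absolute bound $p(\bz)\ge(d!/d^d)\alpha\max_{\bx\in\BS_p^n}p(\bx)$. This sign-flipping step is precisely where the even-$d$ case breaks: for even $d$ one has $p(-\bz)=p(\bz)$, and the polarization bound only guarantees that $|p(\bz)|$ is large, not that $p(\bz)$ is large and comparable to $\max_{\bx\in\BS_p^n}p(\bx)$. Accordingly, the guarantee for even $d$ must be phrased relatively (against, say, $\max_{\bx\in\BS_p^n}|p(\bx)|$ or $\max p-\min p$), which is the qualification in the lemma. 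This sign-control issue is the only genuine obstacle; the rest is a short chain of known manipulations.
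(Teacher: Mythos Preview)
The paper does not give its own proof of this lemma; it simply quotes the result from \cite[Theorem~2]{hou2014hardness} and remarks that the reduction goes ``via the so-called polarization formula introduced in~\cite[Lemma~1]{he2010approximation}.'' Your argument is precisely that reduction written out in full, and it is correct: you start from vectors $\bx_1,\dots,\bx_d\in\BS_p^n$ certifying $\TT(\bx_1,\dots,\bx_d)\ge\alpha\|\TT\|_{p_\sigma}$, apply the polarization identity, pick the best sign pattern by enumeration over $2^d$ choices, normalize using $\|\sum_k\xi_k\bx_k\|_p\le d$, and finally flip the sign when $d$ is odd. Your discussion of why the even case only yields a relative guarantee is also the right diagnosis. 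One small point worth making explicit is that you are reading ``approximates the spectral $p$-norm'' as ``returns feasible vectors attaining that fraction of the optimum''; this is the intended meaning (the paper's own Algorithm~\ref{alg:alg0} does exactly that), but the lemma statement alone could be read as only returning a value.
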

Therefore, by applying Algorithm~\ref{alg:alg0} with either the deterministic hitting set $\BH_{2}^n(\alpha,\beta)$ or the randomized one $\BH_3^n(\epsilon)$, Theorem~\ref{thm:snorm} and Lemma~\ref{lma:equiv-hou} lead to the following result on $\max_{\bx\in\BS_p^n}p(\bx)$. We leave the details to interested readers. 
\begin{corollary}
For any given $p\in(2,\infty)$ and integer $d\ge3$, there is a deterministic (respectively, randomized) polynomial-time approximation algorithm for $\ell_p$-sphere constrained $n$-variate degree-$d$ homogeneous polynomial optimization with approximation ratio $\Omega\big((\ln n)^{\frac{d-2}{p}} n^{-\frac{d-2}{2}}\big)$ (respectively, $\Omega\big((\ln n)^{\frac{d-2}{2}} n^{-\frac{d-2}{2}}\big)$) when $d$ is odd. The approximation ratio is in a relative sense when $d$ is even. 
\end{corollary}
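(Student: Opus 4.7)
The proof will essentially thread together three ingredients already established in the excerpt: Algorithm~\ref{alg:alg0} together with Theorem~\ref{thm:snorm} for the tensor spectral $p$-norm, the explicit hitting sets $\BH_2^n(\alpha,\beta)$ (Corollary~\ref{thm:h2}) and $\BH_3^n(\epsilon)$ (Theorem~\ref{thm:rand-hitting-set}) covering $\BS_p^n$, and the reduction in Lemma~\ref{lma:equiv-hou} from homogeneous polynomial optimization to spectral $p$-norm approximation of the associated symmetric tensor. The plan is, given the instance $\max_{\bx\in\BS_p^n} p(\bx)$ with $p(\bx)=\langle\TT,\bx^{\otimes d}\rangle$ for a symmetric tensor $\TT\in\R^{n\times n\times\dots\times n}$, to first approximate $\|\TT\|_{p_\sigma}$ via Algorithm~\ref{alg:alg0} and then invoke Lemma~\ref{lma:equiv-hou} to pull this back to an approximate maximizer of $p(\bx)$.

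For the deterministic claim, I would run Algorithm~\ref{alg:alg0} on $\TT$ using the hitting sets $\BH^{n_k}=\BH_2^n(\alpha,\beta)$ (with fixed $\alpha\ge1$ and $\beta\ge\alpha+1$) for each mode $k=1,2,\dots,d-2$. By Corollary~\ref{thm:h2} each such set has hitting ratio $\tau_k=\Omega\bigl(\sqrt[p]{\ln n}/\sqrt{n}\bigr)$ and cardinality polynomial in $n$, so the total number of calls to the convex model~\eqref{cor:final-spectral-p-norm} is polynomial in $n$ (recall $d$ is fixed). Theorem~\ref{thm:snorm} then yields an approximation ratio for $\|\TT\|_{p_\sigma}$ equal to
\[
\frac{1}{\delta_G}\prod_{k=1}^{d-2}\tau_k \;=\; \Omega\!\left(\frac{(\ln n)^{(d-2)/p}}{n^{(d-2)/2}}\right),
\]
since $\delta_G$ is a universal constant. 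Applying Lemma~\ref{lma:equiv-hou} (whose multiplicative factor $d!\,d^{-d}$ is a constant for fixed $d$) transfers this same bound, up to a constant, to $\max_{\bx\in\BS_p^n}p(\bx)$ when $d$ is odd, and in the relative sense when $d$ is even.

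For the randomized claim, I would substitute $\BH^{n_k}=\BH_3^n(\epsilon)$ with some fixed constant $\epsilon\in(0,1)$ (or $\epsilon=1/\mathrm{poly}(n)$, since its cardinality depends only logarithmically on $1/\epsilon$). By Theorem~\ref{thm:rand-hitting-set}, with probability at least $1-\epsilon$ each of these sets is a valid hitting set with $\tau_k=\Omega(\sqrt{\ln n/n})$ and polynomial cardinality. Conditioned on this event (taking a union bound over the $d-2$ modes, which is again at most $(d-2)\epsilon$ and is negligible), Theorem~\ref{thm:snorm} gives an approximation ratio $\Omega\bigl((\ln n)^{(d-2)/2}/n^{(d-2)/2}\bigr)$ for $\|\TT\|_{p_\sigma}$, and Lemma~\ref{lma:equiv-hou} finishes the argument exactly as before.

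There is no serious obstacle here; the proof is purely a composition. The only items worth verifying carefully are that $\TT$ is indeed symmetric so Lemma~\ref{lma:equiv-hou} applies, that all dimension-dependent and $d$-dependent constants (including $\delta_G$, $d!\,d^{-d}$, $\mu_{\alpha,\beta}$, $\nu_{\alpha,\beta}$, and the $\delta_i$'s from Theorem~\ref{thm:rand-hitting-set}) are indeed absorbed into the $\Omega(\cdot)$ notation for fixed $d$ and $p$, and that the rationality assumption $p\in\BQ\cap(2,\infty)$ needed to invoke~\eqref{cor:final-spectral-p-norm} is inherited from the hypothesis $p\in(2,\infty)$ (in fact the algorithm uses the conic model from Corollary~\ref{lma:dual-SDP}, which only requires $p\in\BQ\cap(2,\infty)$; for general real $p$ one can approximate $p$ by a nearby rational with negligible effect on the bound).
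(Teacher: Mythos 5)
Your proposal is correct and takes essentially the same route as the paper. The paper gives only a one-sentence proof sketch, namely that the corollary follows by running Algorithm~\ref{alg:alg0} with $\BH_{2}^n(\alpha,\beta)$ (deterministic) or $\BH_3^n(\epsilon)$ (randomized), combining Theorem~\ref{thm:snorm} with Lemma~\ref{lma:equiv-hou}, and leaving the details to the reader; your chaining of the $d-2$ hitting ratios, the constant $1/\delta_G$ from Theorem~\ref{thm:snorm}, and the constant $d!\,d^{-d}$ from Lemma~\ref{lma:equiv-hou} is exactly that computation. You also flag the mismatch between the corollary's hypothesis $p\in(2,\infty)$ and the requirement $p\in\BQ\cap(2,\infty)$ for the conic model~\eqref{cor:final-spectral-p-norm}, a point the paper glosses over, and your rational-approximation remark is a sensible way to close it.
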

The above approximation ratios match currently the best ones by Hou and So~\cite{hou2014hardness}. Our algorithms are easy to implement thanks to the SDP representations in Corollary~\ref{lma:dual-SDP} and the constructions of hitting sets while the methods in~\cite{hou2014hardness} are not implementable. We remark that it is also possible to develop approximation algorithms for general inhomogeneous polynomial optimization over the $\ell_p$-ball with the technique developed in~\cite{he2015approximation}. We leave these to interested readers.} 

\li{Let us} now summarize approximation methods of the tensor nuclear $p$-norm in this section in Table~\ref{tab:alg-ratios} before the detailed discussion.
\begin{table}[!ht]
\centering
\caption{Approximation methods of the nuclear $p$-norm of a tensor $\TT\in\R^{n_1\times n_2\dots\times n_d}$}
\label{tab:alg-ratios}
\begin{tabular}{|lc|ccc|}
\hline
Algorithm &$p$ & Approximation bound    & Approach    & Type        \\ \hline
\cite[Proposition~4.3]{chen2020tensor} & $(1,\infty)$ & $\prod_{k=1}^{d-1} \frac{1}{\sqrt[\leftroot{-2}\uproot{2}q]{n_k}}$                                 &Partition to vectors  & Deterministic  \\
Algorithm~\ref{alg:matricization}  & $\BQ\cap(2,\infty)$& $\frac{1}{\delta_G}\prod_{k=1}^{d-2} \frac{1}{\sqrt[\leftroot{-2}\uproot{2}q]{n_k}}$    &Matrix unfolding   & Deterministic \\
Algorithm~\ref{alg:partition} &  $\BQ\cap(2,\infty)$  & $\frac{1}{\delta_G}\prod_{k=1}^{d-2} \frac{1}{\sqrt[\leftroot{-2}\uproot{2}q]{n_k}}$                                 &Partition to matrices & Deterministic \\
Algorithm~\ref{alg:alg3} with $\BH_{1}^{n_k}$  & $\BQ\cap(2,\infty)$ & $\Omega\left(\prod_{k=1}^{d-2} \sqrt[\leftroot{-2}\uproot{2}q]{\frac{\ln n_k}{n_k}}\right)$                        & $\ell_p$-sphere covering     & Deterministic \\
Algorithm~\ref{alg:alg3} with $\BH_{2}^{n_k}$  & $\BQ\cap(2,\infty)$ & $\Omega\left(\prod_{k=1}^{d-2} \frac{\sqrt[\leftroot{-2}\uproot{2}p]{\ln{n_k}}}{\sqrt{n_k}}\right)$ & $\ell_p$-sphere covering & Deterministic \\
Algorithm~\ref{alg:rand-alg}   & $\BQ\cap(2,\infty)$ & $\Omega\left(\prod_{k=1}^{d-2} \sqrt{\frac{\ln{n_k}}{n_k}}\right)$    & $\ell_p$-sphere covering                                                    & Randomized  \\ \hline
\end{tabular}%
\end{table}

\subsection{Deterministic algorithms via tensor unfolding and partition}\label{sec:alg1}

We first introduce the only known approximation bound of the tensor nuclear $p$-norm via vector fibers. It is essentially the \li{following} result by Chen and Li~\cite{chen2020tensor}.
\begin{lemma}[{\cite[Proposition 4.3]{chen2020tensor}}]\label{thm:vectorbound}
For any $\TT\in\R^{n_1\times n_2\times \dots\times n_d}$ and $p\in[1,\infty]$, denote $\bt_{i_1i_2\dots i_{d-1}}\in\R^{n_d}$ to be the mode-$d$ fiber of $\TT$ by fixing the mode-$k$ index to be $i_k$ for $k=1,2,\dots,d-1$. One has
\begin{equation}
  \label{eq:vectorbound}
  \|\TT\|_p:= \left(\sum_{i_1=1}^{n_1}\sum_{i_2=1}^{n_2}\dots\sum_{i_{d}=1}^{n_{d}}\left|t_{i_1i_2\dots i_{d}}\right|^p\right)^{1/p}
  \le \|\TT\|_{p_*} \le \sum_{i_1=1}^{n_1}\sum_{i_2=1}^{n_2}\dots\sum_{i_{d-1}=1}^{n_{d-1}}\|\bt_{i_1i_2\dots i_{d-1}}\|_p.
\end{equation}
\end{lemma}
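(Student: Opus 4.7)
The plan is to establish the two bounds separately, both directly from the decomposition-based definition of the nuclear $p$-norm in~\eqref{def:nuclear}, without invoking duality (although the lower bound can alternatively be deduced from Lemma~\ref{lma:norm-duality}).

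For the lower bound $\|\TT\|_p \le \|\TT\|_{p_*}$, I would take an arbitrary admissible decomposition $\TT=\sum_{i=1}^r \lambda_i\,\bx_i^1\otimes\bx_i^2\otimes\dots\otimes\bx_i^d$ with $\|\bx_i^k\|_p=1$ for every $k$ and $i$, and apply the triangle inequality for the entrywise $\ell_p$-norm to obtain $\|\TT\|_p\le\sum_{i=1}^r|\lambda_i|\,\|\bx_i^1\otimes\dots\otimes\bx_i^d\|_p$. The key auxiliary fact is the multiplicativity of the entrywise $\ell_p$-norm under outer products,
\[
\|\bx_i^1\otimes\bx_i^2\otimes\dots\otimes\bx_i^d\|_p=\prod_{k=1}^d\|\bx_i^k\|_p=1,
\]
which is immediate by expanding the definition of the outer product entrywise. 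Taking the infimum over all such decompositions gives $\|\TT\|_p\le\|\TT\|_{p_*}$.

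For the upper bound, I would exhibit one specific rank-$(\prod_{k=1}^{d-1}n_k)$ decomposition of $\TT$ and plug it into~\eqref{def:nuclear}. Using the standard basis vectors $\be_{i_k}\in\R^{n_k}$ (which satisfy $\|\be_{i_k}\|_p=1$ for any $p\in[1,\infty]$) for the first $d-1$ modes and the mode-$d$ fibers for the last mode, one has the identity
\[
\TT=\sum_{i_1=1}^{n_1}\sum_{i_2=1}^{n_2}\dots\sum_{i_{d-1}=1}^{n_{d-1}}\be_{i_1}\otimes\be_{i_2}\otimes\dots\otimes\be_{i_{d-1}}\otimes \bt_{i_1i_2\dots i_{d-1}}.
\]
Separating the scalar $\|\bt_{i_1i_2\dots i_{d-1}}\|_p$ (discarding terms where the fiber is zero) and normalizing each nonzero fiber to unit $\ell_p$-norm turns this into a valid decomposition for~\eqref{def:nuclear} with weights $\lambda_{i_1i_2\dots i_{d-1}}=\|\bt_{i_1i_2\dots i_{d-1}}\|_p$. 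The definition of the nuclear $p$-norm as an infimum then yields the claimed upper bound.

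Neither direction presents any real obstacle; the only point that requires a sentence of care is the outer-product multiplicativity $\|\bx^1\otimes\dots\otimes\bx^d\|_p=\prod_k\|\bx^k\|_p$ in the lower bound, and the harmless handling of zero fibers in the upper bound (which can be absorbed by replacing $\bt_{i_1\dots i_{d-1}}/\|\bt_{i_1\dots i_{d-1}}\|_p$ by an arbitrary unit vector with $\lambda=0$). Both endpoints $p=1$ and $p=\infty$ are covered by the same argument since $\|\be_{i_k}\|_p=1$ universally and the multiplicativity of the entrywise $\ell_p$-norm under outer products holds for all $p\in[1,\infty]$.
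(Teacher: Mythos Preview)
Your proof is correct. Note that the paper does not actually prove this lemma; it is quoted verbatim from~\cite[Proposition~4.3]{chen2020tensor} and used as a black box. Your argument---triangle inequality plus the multiplicativity $\|\bx^1\otimes\dots\otimes\bx^d\|_p=\prod_k\|\bx^k\|_p$ for the lower bound, and the explicit fiber decomposition $\TT=\sum_{i_1,\dots,i_{d-1}}\be_{i_1}\otimes\dots\otimes\be_{i_{d-1}}\otimes\bt_{i_1\dots i_{d-1}}$ for the upper bound---is the standard route and almost certainly what the cited reference does as well.
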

It is obvious that the lower and upper bounds of~\eqref{eq:vectorbound} can be computed in polynomial time. Noticing that $$\|\TT\|_p= \left(\sum_{i_1=1}^{n_1}\sum_{i_2=1}^{n_2}\dots\sum_{i_{d-1}=1}^{n_{d-1}}{\|\bt_{i_1i_2\dots i_{d-1}}\|_p}^p\right)^{1/p},
$$
the lower and upper bounds are actually $\ell_p$-norm and $\ell_1$-norm of an $\prod_{k=1}^{d-1}n_k$-dimensional vector, respectively. By the bounds between $\ell_p$-norms (Lemma~\ref{lma:lp-norm-equiv}), $\|\TT\|_p$ provides an approximation bound of $1/\prod_{k=1}^{d-1}\sqrt[q]{n_k}$ for $\|\TT\|_{p_*}$ when $p\in[1,\infty]$. The result is simple but is currently the only possible method when $p\in(1,2)$.

To proceed with better approximation bounds, we have to deal with the matrix nuclear $p$-norms. In fact, Chen and Li~\cite{chen2020tensor} did provide better approximation frameworks of the tensor nuclear $p$-norm via tensor unfoldings and partitions.
\begin{lemma}[{\cite[Theorem 4.7]{chen2020tensor}}]\label{lma:chen2020}
  Let $\TT\in\R^{n_1\times n_2\times \dots\times n_d}$ and $p\in[1,\infty]$. Let $\left\{\BI_1,\BI_2\right\}$ be a partition of $\{1,2,\dots,d\}$ and choose any $i\in\BI_1$ and $j\in\BI_2$. Denote $\Mat(\TT)$ to be the matrix unfolding of $\TT$ by combining modes of $\BI_1$ into the row index and modes of $\BI_2$ into the column index, i.e., a $(\prod_{k\in\BI_1} n_k) \times (\prod_{k\in\BI_2} n_k)$ matrix. Consider the set of matrix slices of $\TT$ obtained by fixing every mode index except modes $i$ and $j$, i.e., a set of $\prod_{1\le k\le d,\,k\neq i,j}n_k$ number of $n_i\times n_j$ matrices and denote $\bt_{p_*}\in\R^{\prod_{1\le k\le d,\,k\neq i,j}n_k}$ to be the vector whose entries are the nuclear $p$-norms of this set of matrix slices. One has
  \begin{equation} \label{eq:matbound}
    \left\|\bt_{p_*}\right\|_p
    \le \|\Mat(\TT)\|_{p_*}
    \le \|\TT\|_{p_*}
    \le \left\| \bt_{p_*} \right\|_p \prod_{1\le k\le d,\,k\neq i,j}{n_k}^{\frac{1}{q}}
    \le \|\Mat(\TT)\|_{p_*} \prod_{1\le k\le d,\,k\neq i,j}{n_k}^{\frac{1}{q}}.
  \end{equation}
\end{lemma}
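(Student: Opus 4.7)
The inequality chain contains three nontrivial links, as the fourth is just the first scaled by $\prod_{k\ne i,j}n_k^{1/q}$. My plan is to handle them in the order \emph{middle, upper, lower}, saving the block-matrix lower bound for last since it will be the main obstacle.

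First, for the middle inequality $\|\Mat(\TT)\|_{p_*}\le \|\TT\|_{p_*}$, I would lift any admissible rank-one decomposition of $\TT$. Taking $\TT=\sum_r\lambda_r\,\bx_r^1\otimes\dots\otimes\bx_r^d$ with $\|\bx_r^k\|_p=1$ for all $k$ and letting $\bu_r=\boxtimes_{k\in\BI_1}\bx_r^k$, $\bv_r=\boxtimes_{k\in\BI_2}\bx_r^k$, the multiplicativity $\|\bx\boxtimes\by\|_p=\|\bx\|_p\|\by\|_p$ forces $\|\bu_r\|_p=\|\bv_r\|_p=1$, so $\Mat(\TT)=\sum_r\lambda_r\bu_r\bv_r^{\T}$ is an admissible rank-one decomposition of $\Mat(\TT)$, and passing to the infimum in the definition of $\|\TT\|_{p_*}$ closes this step. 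For the upper bound $\|\TT\|_{p_*}\le\|\bt_{p_*}\|_p\prod_{k\ne i,j}n_k^{1/q}$, I would decompose $\TT$ along the modes outside $\{i,j\}$ via standard basis vectors, writing $\TT=\sum_{\bd{k}'}\U_{\bd{k}'}$ where $\U_{\bd{k}'}$ places the slice $T^{ij}_{\bd{k}'}$ in the $(i,j)$-modes and the basis vectors $\be_{k'_l}\in\R^{n_l}$ in each remaining mode $l$. Since basis vectors have unit $\ell_p$-norm, extending a rank-one decomposition of $T^{ij}_{\bd{k}'}$ by outer-producting with those basis vectors gives $\|\U_{\bd{k}'}\|_{p_*}\le\|T^{ij}_{\bd{k}'}\|_{p_*}$, and subadditivity of the nuclear $p$-norm followed by Lemma~\ref{lma:lp-norm-equiv} converts the resulting $\ell_1$-sum of slice norms into the desired $\ell_p$-sum with the factor $\prod_{k\ne i,j}n_k^{1/q}$.

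The main obstacle is the lower bound $\|\bt_{p_*}\|_p\le\|\Mat(\TT)\|_{p_*}$, which requires controlling the nuclear $p$-norm of $\Mat(\TT)$ viewed as a block matrix from below by the $\ell_p$-aggregation of its block nuclear $p$-norms. I would fix any admissible decomposition $\Mat(\TT)=\sum_r\lambda_r\bu_r\bv_r^{\T}$ with $\|\bu_r\|_p=\|\bv_r\|_p=1$ and partition $\bu_r,\bv_r$ according to the row-block index $\bd{k}$ over $\BI_1\setminus\{i\}$ and column-block index $\bd{l}$ over $\BI_2\setminus\{j\}$. This yields $T^{ij}_{\bd{k},\bd{l}}=\sum_r\lambda_r\bu_r^{(\bd{k})}(\bv_r^{(\bd{l})})^{\T}$, and after normalizing each sub-vector pair to unit $\ell_p$-norm the definition of matrix nuclear $p$-norm gives $\|T^{ij}_{\bd{k},\bd{l}}\|_{p_*}\le\sum_r|\lambda_r|\|\bu_r^{(\bd{k})}\|_p\|\bv_r^{(\bd{l})}\|_p$. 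Applying Minkowski's inequality in $\ell_p$ over the double index $(\bd{k},\bd{l})$, then separating the sums and invoking the identities $\sum_{\bd{k}}\|\bu_r^{(\bd{k})}\|_p^p=\|\bu_r\|_p^p=1$ and $\sum_{\bd{l}}\|\bv_r^{(\bd{l})}\|_p^p=\|\bv_r\|_p^p=1$ leads to $\|\bt_{p_*}\|_p\le\sum_r|\lambda_r|$; taking the infimum over rank-one decompositions then delivers both the lower bound and, after multiplication by $\prod_{k\ne i,j}n_k^{1/q}$, the fourth inequality in the chain.
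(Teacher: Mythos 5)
The paper does not prove this lemma itself; it is cited directly from~\cite[Theorem~4.7]{chen2020tensor}, so there is no in-paper argument to compare against. Your proof, however, is correct and self-contained. The three nontrivial links are all handled by the right tools: the middle inequality uses that $\Mat$ carries an admissible rank-one decomposition of $\TT$ to one of $\Mat(\TT)$ via $\|\bx\boxtimes\by\|_p=\|\bx\|_p\|\by\|_p$; the upper bound splits $\TT$ into slice tensors along the standard basis and then applies subadditivity plus Lemma~\ref{lma:lp-norm-equiv} with $1-1/p=1/q$; and the lower bound partitions the vectors in any admissible decomposition of $\Mat(\TT)$ into blocks, renormalizes (discarding the null blocks, which contribute zero anyway), and closes with Minkowski's inequality together with $\sum_{\bd{k}}\|\bu_r^{(\bd{k})}\|_p^p=\|\bu_r\|_p^p=1$ and likewise for $\bv_r$. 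The only very minor point worth making explicit is the degenerate case where $\bu_r^{(\bd k)}=\bd{0}$ or $\bv_r^{(\bd l)}=\bd{0}$, which you implicitly handle by dropping those terms; this is harmless but deserves a sentence in a polished write-up.
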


Although the above bounds are tighter than~\eqref{eq:vectorbound}, they cannot be computed in polynomial time because computing the matrix nuclear $p$-norm is NP-hard. Well, the $\Omega(1)$-approximation bound of the matrix nuclear $p$-norm developed in Section~\ref{sec:matrix-p-norm} enlightens the bounds in~\eqref{eq:matbound}. We now propose two polynomial-time algorithms to approximate the tensor nuclear $p$-norm. The first algorithm is fairly straightforward, i.e., computing $\|\cdot\|_{p_u}$-norm of a proper matrix unfolding of $\TT$.
\begin{algorithm}[!h]
\begin{algorithmic}[1]
\REQUIRE A tensor $\TT\in\R^{n_1 \times n_2 \times \dots \times n_d}$, a constant $p\in\BQ\cap(2,\infty)$ and a partition $\{\BI_1,\BI_2\}$ of $\{1,2,\dots,d\}$ with $d-1\in\BI_1$ and $d\in\BI_2$.
\ENSURE An approximation of $\|\TT\|_{p_*}$.
\STATE Unfold the tensor $\TT$ to $\Mat(\TT)\in\R^{\left(\prod_{k\in\BI_1} n_k\right) \times \left(\prod_{k\in\BI_2} n_k\right)}$ by combining modes of $\BI_1$ into the row index and modes of $\BI_2$ into the column index;
\STATE Compute $\|\Mat(\TT)\|_{p_u}$, i.e., the optimal value of the following SDP
\begin{equation*}\label{opt:d-tensor-matric}
    \begin{array}{ll}
    \max & \langle \Mat(\TT),Z\rangle    \\
    \st   &u_1+ u_2+\theta_p\sum_{i=1}^{\prod_{k\in\BI_1} n_k + \prod_{k\in\BI_2}n_k} t_i\le 1\\
    & (v_i, u_1,t_i)\in\BK^3_p\quad i=1,2,\dots, \prod_{k\in\BI_1} n_k \\
    & (v_i, u_2,t_i)\in\BK^3_p\quad i=\prod_{k\in\BI_1} n_k+1,\prod_{k\in\BI_1} n_k+2,\dots, \prod_{k\in\BI_1} n_k + \prod_{k\in\BI_2}n_k\\ 
    & 
    \Diag(\bv)\succeq \begin{pmatrix} O & Z/2 \\Z^\T/2 & O\end{pmatrix};
    \end{array}
    \end{equation*} 
\RETURN $\|\Mat(\TT)\|_{p_u}$.
\end{algorithmic}
\caption{Approximating the tensor nuclear $p$-norm based on matrix unfolding}
\label{alg:matricization}
\end{algorithm}

We see from~\eqref{eq:matbound} that $\|\Mat(\TT)\|_{p_*}$ is already a better approximation of $\|\TT\|_{p_*}$ with an approximation bound $1/\prod_{1\le k\le d,\,k\neq i,j}\sqrt[q]{n_k}$. By choosing the largest two $n_k$'s, i.e., $n_{d-1}$ and $n_d$ as in Algorithm~\ref{alg:matricization}, the bound attains the best one $1/\prod_{k=1}^{d-2}\sqrt[q]{n_k}$. Combining it with the polynomial-time method to approximate $\|\Mat(\TT)\|_{p_*}$ using $\|\Mat(\TT)\|_{p_u}$ with approximation bound $1/\delta_G$ (Theorem~\ref{thm:KG-matrix-nuclear-pnorm}), Algorithm~\ref{alg:matricization} immediately leads to the following approximation bound of $\|\TT\|_{p_*}$.
\begin{proposition}
  For any $\TT\in\R^{n_1 \times n_2 \times \dots \times n_d}$, $p\in \BQ\cap(2,\infty)$ and partition $\{\BI_1,\BI_2\}$ of $\{1,2,\dots,d\}$ with $d-1\in\BI_1$ and $d\in\BI_2$, Algorithm~\ref{alg:matricization} is a deterministic polynomial-time algorithm whose output $\operatorname{Alg}_{\,\ref{alg:matricization}}(\|\TT\|_{p_*})$ satisfies
$$
\left(\frac{1}{\delta_G}\prod_{k=1}^{d-2} \frac{1}{\sqrt[\leftroot{-2}\uproot{2}q]{n_k}}\right)\|\TT\|_{p_*}\le \operatorname{Alg}_{\,\ref{alg:matricization}}(\|\TT\|_{p_*})\le \|\TT\|_{p_*}.
$$
\end{proposition}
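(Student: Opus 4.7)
The plan is to prove this directly as a clean corollary of two already-established pillars: Theorem~\ref{thm:KG-matrix-nuclear-pnorm} (the $\delta_G$-factor approximation of $\|\cdot\|_{p_*}$ by $\|\cdot\|_{p_u}$ in the matrix case) and Lemma~\ref{lma:chen2020} (the sandwich relation between a tensor nuclear $p$-norm and its matrix unfolding). The polynomial-time claim is essentially automatic: by Corollary~\ref{lma:dual-SDP}, Algorithm~\ref{alg:matricization} outputs exactly $\|\Mat(\TT)\|_{p_u}$, which is the optimal value of an SDP whose number of variables and linear/SOC constraints are polynomial in the $n_k$'s and in $b$ (where $1/p = a/b$ in lowest terms). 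An interior-point method therefore solves it to arbitrary accuracy in polynomial time.

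For the upper bound, I would simply chain two inequalities. First, Theorem~\ref{thm:KG-matrix-nuclear-pnorm} applied to the matrix $\Mat(\TT)$ gives $\|\Mat(\TT)\|_{p_u} \le \|\Mat(\TT)\|_{p_*}$. Second, the middle inequality of Lemma~\ref{lma:chen2020}, namely $\|\Mat(\TT)\|_{p_*} \le \|\TT\|_{p_*}$, completes the chain and delivers $\operatorname{Alg}_{\,\ref{alg:matricization}}(\|\TT\|_{p_*}) \le \|\TT\|_{p_*}$.

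For the lower bound, I would invoke Lemma~\ref{lma:chen2020} with the index choice $i = d-1 \in \BI_1$ and $j = d \in \BI_2$, which is permissible by the partition hypothesis. Under this choice, the product $\prod_{1\le k\le d,\,k\neq i,j} n_k^{1/q}$ collapses to precisely $\prod_{k=1}^{d-2} n_k^{1/q}$, giving $\|\TT\|_{p_*} \le \|\Mat(\TT)\|_{p_*}\prod_{k=1}^{d-2} n_k^{1/q}$. Applying Theorem~\ref{thm:KG-matrix-nuclear-pnorm} once more in the form $\|\Mat(\TT)\|_{p_*} \le \delta_G\,\|\Mat(\TT)\|_{p_u}$ and rearranging yields $\frac{1}{\delta_G}\prod_{k=1}^{d-2} n_k^{-1/q}\,\|\TT\|_{p_*} \le \|\Mat(\TT)\|_{p_u} = \operatorname{Alg}_{\,\ref{alg:matricization}}(\|\TT\|_{p_*})$.

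There is no real obstacle in this proof; the entire argument is just bookkeeping once the two pillars are in hand. The only point worth flagging is that Lemma~\ref{lma:chen2020} allows a free choice of representative indices $i,j$ from the two blocks of the partition, and taking $i=d-1$, $j=d$ is what aligns the exponent product with the target bound. Under the standing assumption $2 \le n_1 \le \dots \le n_d$, placing the two largest modes on opposite sides of the partition (as prescribed in Algorithm~\ref{alg:matricization}) also yields the sharpest bound obtainable by this unfolding strategy.
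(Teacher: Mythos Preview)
Your proposal is correct and follows exactly the same route as the paper: the paper treats this proposition as an immediate consequence of Lemma~\ref{lma:chen2020} (specifically the sandwich in~\eqref{eq:matbound} with $i=d-1$, $j=d$) together with Theorem~\ref{thm:KG-matrix-nuclear-pnorm}, and your chaining of these two ingredients mirrors that argument precisely.
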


The second approach to approximate $\|\TT\|_{p_*}$ is using the quantity $\|\bt_{p_*}\|_p$ in~\eqref{eq:matbound}. The main effort is to compute entries of $\bt_{p_*}$. Every entry is the nuclear $p$-norm of a matrix and we use $\|\cdot\|_{p_u}$ to approximate it.
\begin{algorithm}[!h]
\begin{algorithmic}[1]
\REQUIRE A tensor $\TT\in\R^{n_1 \times n_2 \times \dots \times n_d}$ and a constant $p\in\BQ\cap(2,\infty)$.
\ENSURE An approximation of $\|\TT\|_{p_*}$.
\STATE Partition $\TT$ by fixing every mode index except modes $d-1$ and $d$ to obtain a $\prod_{k=1}^{d-2}n_k$ number of $n_{d-1}\times n_d$ matrices, namely $\{T_1,T_2,\dots,T_{\prod_{k=1}^{d-2}n_k}\}\subseteq\R^{n_{d-1}\times n_d}$;
\STATE For every $i=1,2,\dots,\prod_{k=1}^{d-2}n_k$, compute $u_i=\|T_i\|_{p_u}$, i.e., the optimal value of the following SDP
    \begin{equation*} \label{opt:d-tensor-partition}
    \begin{array}{ll}
    \max & \langle T_i,Z\rangle    \\
    \st   &u_1+ u_2+\theta_p\sum_{i=1}^{n_{d-1}+n_d} t_i\le 1\\
    & (v_i, u_1,t_i)\in\BK^3_p\quad i=1,2,\dots, n_{d-1} \\
    & (v_i, u_2,t_i)\in\BK^3_p\quad i=n_{d-1}+1,n_{d-1}+2,\dots, n_{d-1} + n_d\\
    & 
    \Diag(\bv)\succeq \begin{pmatrix} O & Z/2 \\Z^\T/2 & O\end{pmatrix};
    \end{array}
    \end{equation*} 
\RETURN $\|(u_1,u_2,\dots,u_{\prod_{k=1}^{d-2}n_k})\|_p$.
\end{algorithmic}
\caption{Approximating the tensor nuclear $p$-norm based on matrix partition}
\label{alg:partition}
\end{algorithm}

We have exactly the same theoretical approximation bound to that of Algorithm~\ref{alg:matricization}.
\begin{proposition}
  For any $\TT\in\R^{n_1 \times n_2 \times \dots \times n_d}$ and $p\in \BQ\cap(2,\infty)$, Algorithm~\ref{alg:partition} is a deterministic polynomial-time algorithm whose output $\operatorname{Alg}_{\,\ref{alg:partition}}(\|\TT\|_{p_*})$ satisfies
$$
\left(\frac{1}{\delta_G}\prod_{k=1}^{d-2} \frac{1}{\sqrt[\leftroot{-2}\uproot{2}q]{n_k}}\right)\|\TT\|_{p_*}\le \operatorname{Alg}_{\,\ref{alg:partition}}(\|\TT\|_{p_*})\le \|\TT\|_{p_*}.
$$
\end{proposition}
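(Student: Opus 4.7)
The proof is essentially an assembly of two tools that have already been developed earlier in the paper, so I expect no serious obstacle: the tensor partition bounds from Lemma~\ref{lma:chen2020} and the matrix-level guarantee from Theorem~\ref{thm:KG-matrix-nuclear-pnorm}. The main content is to verify that Algorithm~\ref{alg:partition} exactly realizes the vector $\bt_{p_*}$ appearing in Lemma~\ref{lma:chen2020} (up to a constant factor $\delta_G$ in each coordinate) and then to take $\ell_p$-norms coordinatewise.

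First, I would apply Lemma~\ref{lma:chen2020} with the specific choice $i=d-1$ and $j=d$. This is compatible with Algorithm~\ref{alg:partition}, which partitions $\TT$ by fixing all mode indices except $d-1$ and $d$. The matrices $T_1, T_2, \dots, T_{\prod_{k=1}^{d-2} n_k} \in \R^{n_{d-1}\times n_d}$ obtained in the algorithm are exactly the matrix slices appearing in the definition of $\bt_{p_*}$ with entries $\|T_i\|_{p_*}$, so Lemma~\ref{lma:chen2020} yields
$$\|\bt_{p_*}\|_p \le \|\TT\|_{p_*} \le \|\bt_{p_*}\|_p \prod_{k=1}^{d-2} n_k^{1/q}.$$
Note that, by choosing $i = d-1$ and $j = d$, the product $\prod_{1\le k\le d,\, k\neq i,j} n_k^{1/q}$ is minimized (since $n_{d-1}, n_d$ are the two largest dimensions), giving the tightest approximation ratio from this family.

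Next, I would invoke Theorem~\ref{thm:KG-matrix-nuclear-pnorm} to each slice: each output $u_i$ of the SDP in Step 2 satisfies $u_i \le \|T_i\|_{p_*} \le \delta_G\, u_i$. Since the $\ell_p$-norm is monotone in nonnegative coordinates, writing $\bu = (u_1, \dots, u_{\prod_{k=1}^{d-2} n_k})$ gives
$$\|\bu\|_p \le \|\bt_{p_*}\|_p \le \delta_G \|\bu\|_p.$$
Chaining with the Lemma~\ref{lma:chen2020} sandwich produces the upper bound $\|\bu\|_p \le \|\bt_{p_*}\|_p \le \|\TT\|_{p_*}$ directly, and the lower bound
$$\|\TT\|_{p_*} \le \|\bt_{p_*}\|_p \prod_{k=1}^{d-2} n_k^{1/q} \le \delta_G \|\bu\|_p \prod_{k=1}^{d-2} n_k^{1/q},$$
which rearranges to exactly the claimed approximation bound for the algorithm output $\|\bu\|_p$.

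Finally, I would argue polynomial-time solvability. The number of matrix slices is $\prod_{k=1}^{d-2} n_k$, which is polynomial in the input size $\prod_{k=1}^{d} n_k$, and for each slice the SDP of size $O(n_{d-1}+n_d)$ (with $O(b)$ extra variables per SOC block, where $1/p = a/b$ in lowest terms as in Proposition~\ref{prop:SOC-A-1P}) can be solved to arbitrary accuracy in polynomial time by an interior-point method, per the discussion after Corollary~\ref{lma:dual-SDP}. Thus the total runtime is polynomial. The only conceptually subtle point is ensuring that the upper bound $\|\bu\|_p \le \|\TT\|_{p_*}$ does not suffer a $\delta_G$ loss, which is automatic because the lower matrix-nuclear-norm inequality $u_i \le \|T_i\|_{p_*}$ is tight on one side; otherwise the argument is a clean composition.
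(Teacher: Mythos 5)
Your proposal is correct and takes essentially the same route as the paper: apply Lemma~\ref{lma:chen2020} with $i=d-1$, $j=d$ to sandwich $\|\TT\|_{p_*}$ by $\|\bt_{p_*}\|_p$, then use Theorem~\ref{thm:KG-matrix-nuclear-pnorm} slicewise together with coordinatewise monotonicity of the $\ell_p$-norm to relate $\|\bt_{p_*}\|_p$ to $\|\bu\|_p$ within a factor of $\delta_G$, and chain the two sandwiches. The paper's proof is just a more compressed version of the same argument, omitting the explicit polynomial-time discussion that you spell out.
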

\begin{proof}
    Denote $\bu=(u_1,u_2,\dots,u_{\prod_{k=1}^{d-2}n_k})^{\T}$. By Theorem~\ref{thm:KG-matrix-nuclear-pnorm}, one has $\|T_i\|_{p_*}/\delta_G\le u_i=\|T\|_{p_u} \le \|T_i\|_{p_*}$ for $i=1,2,\dots,\prod_{k=1}^{d-2}n_k$. This implies that $\|\bt_{p_*}\|_p/\delta_G\le\|\bu\|_p\le\|\bt_{p_*}\|_p$. The result follows immediately from the bounds $\left\|\bt_{p_*}\right\|_p   \le \|\TT\|_{p_*} \le \left\| \bt_{p_*} \right\|_p \prod_{k=1}^{d-2} {n_k}^{1/q}$ in~\eqref{eq:matbound}.
\end{proof}

Both Algorithm~\ref{alg:matricization} and Algorithm~\ref{alg:partition} have the same theoretical performance guarantee but their complexities are different. In particular, Algorithm~\ref{alg:matricization} needs to solve only one SDP with dimension $O(\prod_{k=1}^d n_k)$ while Algorithm~\ref{alg:partition} needs to solve $\prod_{k=1}^{d-2}n_k$ number of SDPs each with dimension $O(n_{d-1} n_d)$. Therefore, Algorithm~\ref{alg:partition} runs faster than Algorithm~\ref{alg:matricization} in general, especially when $d$ is large. 

\subsection{Deterministic approximation via $\ell_p$-sphere covering}\label{sec:alg2}

The approximation bounds of the tensor nuclear $p$-norm obtained in Section~\ref{sec:alg1} cannot be improved via manipulating matrices as they are restricted by the bounds in Lemma~\ref{lma:chen2020}. In order to make improvement so as to match the best approximation bound for the tensor spectral $p$-norm, we now apply the $\ell_p$-sphere covering developed in Section~\ref{sec:hitting-sets}. It also needs to adopt certain reformulation and convex optimization proposed in~\cite{hu2022complexity} with some extra treatments.

To explain the main idea, let us first focus on tensors of order three, i.e., $\TT\in\R^{n_1\times n_2\times n_3}$. By the duality in Lemma~\ref{lma:norm-duality}, one has
\begin{align}\label{eq:nuclear-pnorm-pqnorm}
    \|\TT\|_{p_*}&=\max \left\{\langle\TT, \mathcal{Z}\rangle:\|\mathcal{Z}\|_{p_\sigma} \le 1\right\}\notag
    \\&=\max\left\{\langle\TT,\Z\rangle:\langle\Z,\bx\otimes\by\otimes\bz\rangle\le 1 \text{ for all } \bx\in\BS_p^{n_1},\,\by\in\BS_p^{n_2},\,\bz\in\BS_p^{n_3}\right\}\notag
    \\&=\max\left\{\langle\TT,\Z\rangle:\langle\Z\times_1\bx,\by\otimes\bz\rangle\le 1 \text{ for all } \bx\in\BS_p^{n_1},\,\by\in\BS_p^{n_2},\,\bz\in\BS_p^{n_3}\right\}\notag
    \\&=\max\left\{\langle\TT,\Z\rangle:\|\Z\times_1\bx\|_{p_\sigma}\le 1\text{ for all }\bx\in\BS_p^{n_1}\right\}.
\end{align}
By noticing that $\|A\|_{p_v}/\delta_G  \le \|A\|_{p_\sigma} \le \|A\|_{p_v}$ in Lemma~\ref{prop:matrix-equi-pq-vecp}, we see that 
$$\max\left\{\langle\TT,\Z\rangle:\|\Z\times_1\bx\|_{p_v}\le 1\text{ for all }\bx\in\BS_p^{n_1}\right\}$$
becomes a restriction of~\eqref{eq:nuclear-pnorm-pqnorm} and
$$\max\left\{\langle\TT,\Z\rangle:\|\Z\times_1\bx\|_{p_v}/\delta_G\le 1\text{ for all }\bx\in\BS_p^{n_1}\right\}=\delta_G\max\left\{\langle\TT,\Z\rangle:\|\Z\times_1\bx\|_{p_v}\le 1\text{ for all }\bx\in\BS_p^{n_1}\right\}$$
becomes a relaxation of~\eqref{eq:nuclear-pnorm-pqnorm}. Therefore, we obtain
\begin{equation}\label{eq:equi-pq-vecp}
\|\TT\|_{p_*}/\delta_G
\le \max\left\{\langle\TT,\Z\rangle:\|\Z\times_1\bx\|_{p_v}\le 1 \text{ for all }\bx\in\BS_p^{n_1}\right\} \le \|\TT\|_{p_*}.
\end{equation}
It suffices to focus on $\max\{\langle\TT,\Z\rangle:\|\Z\times_1\bx\|_{p_v}\le 1 \text{ for all }\bx\in\BS_p^{n_1}\}$. In fact, by the duality theory in Corollary~\ref{lma:dual-SDP}, it is easy to verify that
\begin{equation}\label{eq:nuclear-pnorm-SDP}
    \begin{array}{lllll}
\max & \langle\TT,\Z\rangle &= &\max & \langle\TT,\Z\rangle \\
\st  &\|\Z\times_1\bx\|_{p_v}\le 1 \quad \bx\in\BS_p^{n_1} & &\st  &  u_1^{\bx}+ u_2^{\bx}+\theta_p\sum_{i=1}^{n_2+n_3} t_i^{\bx}\le1 \quad  \bx\in\BS_p^{n_1}  \\
&& && (v_i^{\bx}, u_1^{\bx},t_i^{\bx})\in\BK^3_p\quad \bx\in\BS_p^{n_1},\,i=1,2,\dots,n_2 \\
&& && (v_i^{\bx}, u_2^{\bx},t_i^{\bx})\in\BK^3_p\quad \bx\in\BS_p^{n_1},\, i=n_2+1,n_2+2,\dots, n_2+n_3\\
&& && 
    \Diag(\bv^{\bx})\succeq \begin{pmatrix} O & \Z\times_1\bx/2 \\(\Z\times_1\bx)^\T/2 & O\end{pmatrix} \quad \bx\in\BS_p^{n_1}.
    \end{array}
    \end{equation}
Everything looks doable except the infinite number of constraints brought by $\bx\in\BS_p^{n_1}$. 
Now, the hitting sets developed in Section~\ref{sec:hitting-sets} are helpful. This is essentially the idea in~\cite[Section~3.2]{he2023approx}, relaxing~\eqref{eq:nuclear-pnorm-SDP} by replacing $\BS_p^{n_1}$ with a hitting set that consists of a polynomial number of vectors. We summarize the polynomial-time algorithm to approximate $\|\TT\|_{p_*}$ as well as its theoretical guarantee based on $\ell_p$-sphere covering below.

\begin{algorithm}[!h]
\begin{algorithmic}[1]
\REQUIRE A tensor $\TT\in\R^{n_1\times n_2\times n_3}$, a constant $p\in\BQ\cap(2,\infty)$ and a hitting set $\BH^{n_1}\in\BT^{n_1}_p\left(\tau, O({n_1}^{\alpha_1})\right)$.
\ENSURE An approximation of $\|\TT\|_{p_*}$.
\STATE Solve the following SDP
\begin{equation}\label{opt:three-tensor-sdp}
    \begin{array}{ll}
\max & \langle\TT,\Z\rangle \\
\st  &  u_1^{\bx}+ u_2^{\bx}+\theta_p\sum_{i=1}^{n_2+n_3} t_i^{\bx}\le1 \quad  \bx\in\BH^{n_1}  \\
& (v_i^{\bx}, u_1^{\bx},t_i^{\bx})\in\BK^3_p\quad \bx\in\BH^{n_1},\,i=1,2,\dots,n_2 \\
& (v_i^{\bx}, u_2^{\bx},t_i^{\bx})\in\BK^3_p\quad \bx\in\BH^{n_1},\, i=n_2+1,n_2+2,\dots, n_2+n_3\\
& 
    \Diag(\bv^{\bx})\succeq \begin{pmatrix} O & \Z\times_1\bx/2 \\(\Z\times_1\bx)^\T/2 & O\end{pmatrix} \quad \bx\in\BH^{n_1}
    \end{array}
\end{equation}
to obtain its optimal value $u$;
\RETURN $\tau u$.
\end{algorithmic}
\caption{Approximating the tensor nuclear $p$-norm based on $\ell_p$-sphere covering}
\label{alg:alg2}
\end{algorithm}

\begin{theorem}\label{thm:approx-ratio-three} For any $\TT\in\R^{n_1 \times n_2 \times n_3}$, $p\in \BQ\cap(2,\infty)$ and hitting set $\BH^{n_1}\in\BT^{n_1}_p\left(\tau, O({n_1}^{\alpha_1})\right)$,
Algorithm~\ref{alg:alg2} is a deterministic polynomial-time algorithm whose output $\operatorname{Alg}_{\,\ref{alg:alg2}}(\|\TT\|_{p_*})$ satisfies
$$
\frac{\tau}{\delta_G}\|\TT\|_{p_*}\le \operatorname{Alg}_{\,\ref{alg:alg2}}(\|\TT\|_{p_*})\le \|\TT\|_{p_*}.
$$
\end{theorem}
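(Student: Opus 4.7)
My plan is to prove the two bounds via the spectral/nuclear duality (Lemma~\ref{lma:norm-duality}) combined with the hitting-set property, using the SDP~\eqref{opt:three-tensor-sdp} as the central intermediary object.

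For the lower bound $\tau u \ge \|\TT\|_{p_*}/\delta_G$, the key observation is that \eqref{opt:three-tensor-sdp} is a relaxation of the right-hand SDP in~\eqref{eq:nuclear-pnorm-SDP}: it is obtained by replacing the uncountable constraint family indexed by $\bx\in\BS_p^{n_1}$ with the finite subfamily indexed by $\bx\in\BH^{n_1}\subseteq\BS_p^{n_1}$. Being a relaxation of a maximization, $u$ is at least the optimal value of \eqref{eq:nuclear-pnorm-SDP}, which by \eqref{eq:equi-pq-vecp} is in turn at least $\|\TT\|_{p_*}/\delta_G$; multiplying through by $\tau$ closes this direction.

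The harder direction is the upper bound, and this is where the hitting-set property does its real work. I would show that any optimal $Z^*$ of \eqref{opt:three-tensor-sdp} satisfies $\|\tau Z^*\|_{p_\sigma}\le 1$, from which Lemma~\ref{lma:norm-duality} yields $\tau u=\langle\TT,\tau Z^*\rangle\le\|\TT\|_{p_*}$. To obtain this spectral-norm bound, let $(\bx^*,\by^*,\bz^*)\in\BS_p^{n_1}\times\BS_p^{n_2}\times\BS_p^{n_3}$ attain $\|Z^*\|_{p_\sigma}$ and set $\bw:=Z^*\times_2\by^*\times_3\bz^*\in\R^{n_1}$. The case $\bw=\bd{0}$ is trivial; otherwise H\"older's inequality gives $\|Z^*\|_{p_\sigma}=(\bx^*)^{\T}\bw\le\|\bw\|_q$, while the hitting-set definition applied to $\bw/\|\bw\|_q\in\BS_q^{n_1}$ produces some $\bh\in\BH^{n_1}$ with $\bh^{\T}\bw\ge\tau\|\bw\|_q$. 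On the other hand, the block of constraints of \eqref{opt:three-tensor-sdp} indexed by this $\bh$ coincides exactly with the feasibility conditions of~\eqref{cor:final-spectral-p-norm} applied to the matrix $Z^*\times_1\bh$, so $\|Z^*\times_1\bh\|_{p_v}\le u_1^{\bh}+u_2^{\bh}+\theta_p\sum_i t_i^{\bh}\le 1$, and hence $\|Z^*\times_1\bh\|_{p_\sigma}\le 1$ by Lemma~\ref{lma:vecp}. Chaining these estimates,
$$\tau\|Z^*\|_{p_\sigma}\le\tau\|\bw\|_q\le\bh^{\T}\bw=\langle Z^*\times_1\bh,\by^*\otimes\bz^*\rangle\le\|Z^*\times_1\bh\|_{p_\sigma}\le 1,$$
as desired.

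Polynomial runtime is immediate: \eqref{opt:three-tensor-sdp} has $|\BH^{n_1}|=O(n_1^{\alpha_1})$ blocks of constraints, each of size polynomial in $n_2$, $n_3$ and the bit length of $p$ by Corollary~\ref{lma:dual-SDP}, so the whole SDP has polynomially many variables and constraints and is solvable to arbitrary accuracy by an interior-point method. The main obstacle I anticipate is precisely the two-sided control of $\bh^{\T}\bw$: the hitting-set property is needed to lower-bound it by $\tau\|Z^*\|_{p_\sigma}$, whereas the SDP constraints, routed through the $p_v$-surrogate, are needed to upper-bound it by $1$. The argument works only because $\BH^{n_1}\subseteq\BS_p^{n_1}$ is simultaneously small enough (polynomial cardinality) to keep the SDP tractable and rich enough (hitting ratio $\tau$) to witness $\|Z^*\|_{p_\sigma}$ up to the factor $\tau$.
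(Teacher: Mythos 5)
Your argument is correct and mirrors the paper's own proof: the lower bound is obtained identically by viewing~\eqref{opt:three-tensor-sdp} as a relaxation of~\eqref{eq:nuclear-pnorm-SDP} and invoking~\eqref{eq:equi-pq-vecp}, while the upper bound establishes $\|\tau Z^*\|_{p_\sigma}\le 1$ by combining the hitting-set property against the contraction $Z^*\times_2\by^*\times_3\bz^*$ with the feasibility condition $\|Z^*\times_1\bh\|_{p_v}\le 1$ drawn from Corollary~\ref{lma:dual-SDP} and Lemma~\ref{lma:vecp}. The paper writes this via nested maxima over $\BH^{n_1}\times\BS_p^{n_2}\times\BS_p^{n_3}$ rather than a single optimizing triple $(\bx^*,\by^*,\bz^*)$, but the two are the same argument.
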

\begin{proof}
  Since the cardinality of $\BH^{n_1}$ is $O({n_1}^{\alpha_1})$, the SDP in~\eqref{opt:three-tensor-sdp} does have a polynomial number of constraints and so the algorithm runs in polynomial time. To show the approximation bounds, let $\Y$ be an optimal solution of~\eqref{opt:three-tensor-sdp}.

    First, the lower bound can be shown by
    $$
    \|\TT\|_{p_*}/ \delta_G\le 
    \max\left\{\langle\TT,\Z\rangle:\|\Z\times_1\bx\|_{p_v}\le 1 \text{ for all } \bx\in\BS_p^{n_1}\right\}
    \le\langle\TT,\Y\rangle=\operatorname{Alg}_{\,\ref{alg:alg2}}(\|\TT\|_{p_*})/\tau,
    $$
    where the first inequality is due to the lower bound of~\eqref{eq:equi-pq-vecp} while the second inequality is due to~\eqref{eq:nuclear-pnorm-SDP} and that~\eqref{opt:three-tensor-sdp} is a relaxation of~\eqref{eq:nuclear-pnorm-SDP}.
    
    Next, as $\BH^{n_1}$ is a $\tau$-hitting set, we have that
    \begin{align*}
    \max_{\bx \in \BH^{n_1},\,\by\in\BS_p^{n_2},\,\bz\in\BS_p^{n_3}} \langle\Y\times_1\bx,\by\otimes\bz\rangle
    &=\max_{\by\in\BS_p^{n_2},\,\bz\in\BS_p^{n_3}} \max_{\bx \in \BH^{n_1}} \left\langle\Y\times_2\by\times_3\bz,\bx\right\rangle\\
    &\ge\max_{\by\in\BS_p^{n_2},\,\bz\in\BS_p^{n_3}} \tau\|\Y\times_2\by\times_3\bz\|_q\\
    &=\tau\max_{\bx\in\BS_p^{n_1},\,\by\in\BS_p^{n_2},\,\bz\in\BS_p^{n_3}} \left\langle\Y\times_2\by\times_3\bz,\bx\right\rangle\\
    &=\tau\|\Y\|_{p_\sigma}.
    \end{align*}
    By the feasibility of $\Y$ to~\eqref{opt:three-tensor-sdp} and Corollary~\ref{lma:dual-SDP}, we have that $\|\Y\times_1\bx\|_{p_v}\le 1$ for all $\bx\in\BH^{n_1}$. Therefore,
    $$
    \|\tau \Y\|_{p_\sigma} \le \max_{\bx \in \BH^{n_1},\by\in\BS_p^{n_2},\bz\in\BS_p^{n_3}} \langle\Y\times_1\bx,\by\otimes\bz\rangle=\max _{\bx \in \BH^{n_1}}\|\Y\times_1\bx\|_{p_\sigma} \le \max _{\bx \in \BH^{n_1}}\|\Y\times_1\bx\|_{p_v} \le 1,
    $$
    implying that $\tau \Y$ is feasible to $\max \left\{\langle\TT, \mathcal{Z}\rangle:\|\mathcal{Z}\|_{p_\sigma} \le 1\right\}=\|\TT\|_{p_*}$. Finally, we have that
    $$
    \|\TT\|_{p_*}=\max \left\{\langle\TT, \mathcal{Z}\rangle:\|\mathcal{Z}\|_{p_\sigma} \le 1\right\} \ge\langle\TT, \tau \Y\rangle=\operatorname{Alg}_{\,\ref{alg:alg2}}(\|\TT\|_{p_*}),
    $$
    implying the upper bound.
\end{proof}

It is straightforward to generalize Algorithm~\ref{alg:alg2} to approximate the nuclear $p$-norm of higher-order tensors; see Algorithm~\ref{alg:alg3}. The theoretical approximation bound is summarized in Theorem~\ref{thm:ratio-higher-order}, whose proof is similar to that of Theorem~\ref{thm:approx-ratio-three} and is thus omitted.
\begin{algorithm}[!h]
\begin{algorithmic}[1]
\REQUIRE A tensor $\TT\in\R^{n_1 \times n_2 \times \dots \times n_d}$, a constant $p\in\BQ\cap(2,\infty)$ and $d-2$ hitting sets $\BH^{n_k}\in\BT^{n_k}_p(\tau_k, O({n_k}^{\alpha_k}))$ for $k=1,2,\dots,d-2$.
\ENSURE An approximation of $\|\TT\|_{p_*}$.
\STATE Solve the following SDP
\begin{equation*}
    \begin{array}{ll}
\max & \langle\TT,\Z\rangle \\
\st  &  u_1^{\bx}+ u_2^{\bx}+\theta_p\sum_{i=1}^{n_{d-1}+n_d} t_i^{\bx}\le 1   \quad  \bx\in\BH^{n_1}\vee\BH^{n_2}\vee\dots\vee\BH^{n_{d-2}} \\
& (v_i^{\bx}, u_1^{\bx},t_i^{\bx})\in\BK^3_p\quad \bx\in\BH^{n_1}\vee\BH^{n_2}\vee\dots\vee\BH^{n_{d-2}},\, i=1,2,\dots,n_{d-1} \\
& (v_i^{\bx}, u_2^{\bx},t_i^{\bx})\in\BK^3_p\quad \bx\in\BH^{n_1}\vee\BH^{n_2}\vee\dots\vee\BH^{n_{d-2}},\, i=n_{d-1}+1,n_{d-1}+2,\dots, n_{d-1}+n_d\\
& 
    \Diag(\bv)\succeq \frac{1}{2}\begin{pmatrix} O & \Z\times_1\bx_1\dots\times_{d-2}\bx_{d-2} \\(\Z\times_1\bx_1\dots\times_{d-2}\bx_{d-2})^\T & O\end{pmatrix} \quad  \bx\in\BH^{n_1}\vee\BH^{n_2}\vee\dots\vee\BH^{n_{d-2}}\\
&\bx=\bx_1\vee\bx_2\vee\dots\vee\bx_{d-2}
    \end{array}
\end{equation*}
to obtain its optimal value $u$;
\RETURN $u\prod_{k=1}^{d-2} \tau_k$.
\end{algorithmic}
\caption{Approximating the tensor nuclear $p$-norm based on $\ell_p$-sphere covering}
\label{alg:alg3}
\end{algorithm}

\begin{theorem}\label{thm:ratio-higher-order}
For any $\TT\in\R^{n_1 \times n_2 \dots \times n_d}$, $p\in \BQ\cap(2,\infty)$ and hitting sets $\BH^{n_k}\in\BT^{n_k}_p(\tau_k, O({n_k}^{\alpha_k}))$ for $k=1,2,\dots,d-2$,
Algorithm~\ref{alg:alg3} is a deterministic polynomial-time algorithm whose output $\operatorname{Alg}_{\,\ref{alg:alg3}}(\|\TT\|_{p_*})$ satisfies
$$
\left(\frac{1}{\delta_G}\prod_{k=1}^{d-2}\tau_k\right)\|\TT\|_{p_*}\le \operatorname{Alg}_{\,\ref{alg:alg3}}(\|\TT\|_{p_*}) \le \|\TT\|_{p_*}.
$$
\end{theorem}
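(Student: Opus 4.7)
The plan is to mirror the proof of Theorem~\ref{thm:approx-ratio-three}, replacing the single hitting set $\BH^{n_1}$ by the $d-2$ hitting sets that peel off the first $d-2$ modes of $\TT$ one at a time. First I would extend the chain~\eqref{eq:nuclear-pnorm-pqnorm}: using $\langle\Z,\bx_1\otimes\cdots\otimes\bx_d\rangle=\langle\Z\times_1\bx_1\times_2\cdots\times_{d-2}\bx_{d-2},\bx_{d-1}\otimes\bx_d\rangle$ and the definition of $\|\cdot\|_{p_\sigma}$, the duality $\|\TT\|_{p_*}=\max\{\langle\TT,\Z\rangle:\|\Z\|_{p_\sigma}\le 1\}$ becomes
\begin{equation*}
\|\TT\|_{p_*}=\max\left\{\langle\TT,\Z\rangle:\|\Z\times_1\bx_1\times_2\cdots\times_{d-2}\bx_{d-2}\|_{p_\sigma}\le 1\text{ for all }\bx_k\in\BS_p^{n_k},\,k=1,2,\dots,d-2\right\}.
\end{equation*}
Replacing $\|\cdot\|_{p_\sigma}$ by $\|\cdot\|_{p_v}$, the sandwich argument behind~\eqref{eq:equi-pq-vecp} (which uses only Lemma~\ref{thm:contraction} and Lemma~\ref{lma:vecp}) carries over verbatim to give $\|\TT\|_{p_*}/\delta_G\le M\le\|\TT\|_{p_*}$, where $M$ is this $p_v$-formulation. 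Dualizing each $p_v$-constraint via Corollary~\ref{lma:dual-SDP} rewrites $M$ as an SDP parameterized over $(\bx_1,\dots,\bx_{d-2})\in\BS_p^{n_1}\times\cdots\times\BS_p^{n_{d-2}}$, and the SDP of Algorithm~\ref{alg:alg3} is precisely the polynomial-size relaxation obtained by restricting the product to $\BH^{n_1}\vee\cdots\vee\BH^{n_{d-2}}$. Since fewer constraints yield a larger value, the SDP optimum $u$ satisfies $u\ge M\ge\|\TT\|_{p_*}/\delta_G$, so the output $u\prod_{k=1}^{d-2}\tau_k$ delivers the lower bound immediately.

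For the upper bound, I would take an optimal $\Y$ of the SDP and show that $\bigl(\prod_{k=1}^{d-2}\tau_k\bigr)\Y$ is feasible for $\max\{\langle\TT,\Z\rangle:\|\Z\|_{p_\sigma}\le 1\}=\|\TT\|_{p_*}$, from which $\operatorname{Alg}_{\,\ref{alg:alg3}}(\|\TT\|_{p_*})=\langle\TT,\Y\rangle\prod_{k=1}^{d-2}\tau_k\le\|\TT\|_{p_*}$ follows at once. The crux is the estimate $\|\Y\|_{p_\sigma}\le 1/\prod_{k=1}^{d-2}\tau_k$, which I would establish by the same peeling used for the spectral norm in the proof of Theorem~\ref{thm:snorm}. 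Starting from any $(\bw_1^*,\dots,\bw_d^*)$ achieving $\|\Y\|_{p_\sigma}$, I iteratively choose $\bx_k\in\BH^{n_k}$ for $k=1,2,\dots,d-2$: at step $k$ the contraction $\Y\times_1\bx_1\cdots\times_{k-1}\bx_{k-1}\times_{k+1}\bw_{k+1}^*\cdots\times_d\bw_d^*\in\R^{n_k}$ has $\ell_q$-norm at least $\tau_1\cdots\tau_{k-1}\|\Y\|_{p_\sigma}$ (by H\"older's inequality applied with $\bw_k^*$ and the running bound obtained at step $k-1$), and the $\tau_k$-hitting property of $\BH^{n_k}$ delivers a $\bx_k$ gaining a further factor $\tau_k$. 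After $d-2$ iterations one has $\langle\Y,\bx_1\otimes\cdots\otimes\bx_{d-2}\otimes\bw_{d-1}^*\otimes\bw_d^*\rangle\ge\bigl(\prod_{k=1}^{d-2}\tau_k\bigr)\|\Y\|_{p_\sigma}$. On the other hand, SDP feasibility of $\Y$ together with Lemma~\ref{lma:vecp} gives $\|\Y\times_1\bx_1\cdots\times_{d-2}\bx_{d-2}\|_{p_\sigma}\le\|\Y\times_1\bx_1\cdots\times_{d-2}\bx_{d-2}\|_{p_v}\le 1$, so the inner product above is at most $1$, closing the estimate.

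The only real obstacle I anticipate is bookkeeping: tracking the renumbering of tensor modes after each contraction $\Y\mapsto\Y\times_1\bx_1$, and verifying that $|\BH^{n_1}\vee\cdots\vee\BH^{n_{d-2}}|=\prod_{k=1}^{d-2}O(n_k^{\alpha_k})$ keeps the SDP of polynomial size when $d$ is fixed. No new analytic tool beyond Lemma~\ref{lma:vecp}, Corollary~\ref{lma:dual-SDP}, the proof of Theorem~\ref{thm:approx-ratio-three}, and the peeling step of Theorem~\ref{thm:snorm} is required.
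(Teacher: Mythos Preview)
Your proposal is correct and follows exactly the approach the paper intends: the authors explicitly omit the proof of Theorem~\ref{thm:ratio-higher-order} as ``similar to that of Theorem~\ref{thm:approx-ratio-three}'', and your argument is precisely that generalization, replacing the single hitting-set step by the $(d-2)$-fold peeling already used in Theorem~\ref{thm:snorm}. One tiny remark: the sandwich inequality~\eqref{eq:equi-pq-vecp} relies only on Lemma~\ref{lma:vecp} (not on Lemma~\ref{thm:contraction}), but this does not affect your argument.
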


As a remark, the cardinality of any hitting set in Algorithm~\ref{alg:alg3} must be a polynomial function of the problem dimension in order to ensure the algorithm running in polynomial time. By directly applying Theorem~\ref{thm:ratio-higher-order} with the hitting sets $\BH_{1}^n(\alpha,\beta)$ in Section~\ref{sec:worst-hitting} and  $\BH_{2}^n(\alpha,\beta)$ in Section~\ref{sec:brieden-hitting-set}, we have the following improved approximation bounds for the tensor nuclear $p$-norm.

\begin{corollary}\label{cor:nonbridge-gap-determ}
By choosing $\BH^{n_k}=\BH_{1}^{n_k}(\alpha,\beta)$ for $k=1,2,\dots,d-2$ in Algorithm~\ref{alg:alg3}, we have
$$
\Omega\left(\prod_{k=1}^{d-2} \sqrt[\leftroot{-2}\uproot{2}q]{\frac{\ln n_k}{n_k}}\right)\|\TT\|_{p_*}\le \operatorname{Alg}_{\,\ref{alg:alg3}}(\|\TT\|_{p_*}) \le \|\TT\|_{p_*}
$$
for any $\TT\in\R^{n_1 \times n_2 \times\dots \times n_d}$ and $p\in \BQ\cap(2,\infty)$.
\end{corollary}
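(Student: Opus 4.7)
The plan is to prove this corollary as a direct application of Theorem~\ref{thm:ratio-higher-order} once the properties of the hitting set $\BH_{1}^{n_k}(\alpha,\beta)$ stated in Corollary~\ref{cor:lnn1q} are plugged in, and to verify that the cardinalities are polynomial in the dimensions (so that Algorithm~\ref{alg:alg3} indeed runs in polynomial time and Theorem~\ref{thm:ratio-higher-order} is applicable).

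First, I would fix the parameters $\alpha\ge 1$ and $\beta\ge\alpha+1$, regard them as universal constants independent of $n_k$, and invoke Corollary~\ref{cor:lnn1q} for each $k=1,2,\dots,d-2$ to get
\[
\BH_{1}^{n_k}(\alpha,\beta)\in \BT_p^{n_k}\!\left(\mu_{\alpha,\beta}\sqrt[\leftroot{-2}\uproot{2}q]{\tfrac{\ln n_k}{n_k+\ln n_k}},\, n_k^{\ln \nu_{\alpha,\beta}}\!\left(\tfrac{\nu_{\alpha,\beta}n_k}{\ln n_k}+1\right)\right).
\]
The cardinality is bounded by $O(n_k^{\ln \nu_{\alpha,\beta}+1})$, which is polynomial in $n_k$ since $\nu_{\alpha,\beta}$ is a constant; hence there is an $\alpha_k$ such that $|\BH_1^{n_k}(\alpha,\beta)|\le O(n_k^{\alpha_k})$, fulfilling the requirement of Theorem~\ref{thm:ratio-higher-order}.

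Next, I would read off the hitting ratio $\tau_k:=\mu_{\alpha,\beta}\sqrt[q]{\ln n_k/(n_k+\ln n_k)}$. For all sufficiently large $n_k$, $n_k+\ln n_k\le 2n_k$, so $\tau_k\ge \mu_{\alpha,\beta}\,2^{-1/q}\sqrt[q]{\ln n_k/n_k}=\Omega\!\bigl(\sqrt[q]{\ln n_k/n_k}\bigr)$, where the hidden constant depends only on $\alpha$, $\beta$, and $p$. Plugging these $\tau_k$'s into Theorem~\ref{thm:ratio-higher-order} immediately yields
\[
\frac{1}{\delta_G}\prod_{k=1}^{d-2}\tau_k\;=\;\Omega\!\left(\prod_{k=1}^{d-2}\sqrt[\leftroot{-2}\uproot{2}q]{\tfrac{\ln n_k}{n_k}}\right)
\]
as $\delta_G$ is also a universal constant, and therefore
\[
\Omega\!\left(\prod_{k=1}^{d-2}\sqrt[\leftroot{-2}\uproot{2}q]{\tfrac{\ln n_k}{n_k}}\right)\|\TT\|_{p_*}\;\le\; \operatorname{Alg}_{\,\ref{alg:alg3}}(\|\TT\|_{p_*})\;\le\;\|\TT\|_{p_*},
\]
where the upper bound is inherited directly from Theorem~\ref{thm:ratio-higher-order}.

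There is essentially no obstacle in this proof; it is a routine substitution. The only mild care needed is (i) to notice that the absorbed constants (namely $\mu_{\alpha,\beta}^{d-2}/\delta_G$ and the factor $2^{-(d-2)/q}$ coming from the $n_k+\ln n_k \le 2n_k$ estimate) are universal once $\alpha,\beta,p,d$ are fixed, so that the $\Omega$ notation is legitimate, and (ii) to ensure the cardinality bound is polynomial so that Algorithm~\ref{alg:alg3} runs in polynomial time, which is precisely what the $n_k^{\ln\nu_{\alpha,\beta}}(\nu_{\alpha,\beta}n_k/\ln n_k+1)$ bound provides.
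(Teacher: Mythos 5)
Your proposal is correct and takes essentially the same approach as the paper, which states Corollary~\ref{cor:nonbridge-gap-determ} as an immediate consequence of Theorem~\ref{thm:ratio-higher-order} and Corollary~\ref{cor:lnn1q} without spelling out a proof; you have simply written out the routine substitution (hitting ratio $\mu_{\alpha,\beta}\sqrt[q]{\ln n_k/(n_k+\ln n_k)}=\Omega(\sqrt[q]{\ln n_k/n_k})$, polynomial cardinality $O(n_k^{\ln\nu_{\alpha,\beta}+1})$, absorbing the constants $\mu_{\alpha,\beta}^{d-2}/\delta_G$ into the $\Omega$).
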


\begin{corollary}\label{cor:bridge-gap-determ}
By choosing $\BH^{n_k}=\BH_{2}^{n_k}(\alpha,\beta)$ for $k=1,2,\dots,d-2$ in Algorithm~\ref{alg:alg3}, we have
$$
\Omega\left(\prod_{k=1}^{d-2} \frac{\sqrt[\leftroot{-2}\uproot{2}p]{\ln{n_k}}}{\sqrt{n_k}}\right)\|\TT\|_{p_*}\le \operatorname{Alg}_{\,\ref{alg:alg3}}(\|\TT\|_{p_*}) \le \|\TT\|_{p_*}
$$
for any $\TT\in\R^{n_1 \times n_2 \times\dots \times n_d}$ and $p\in \BQ\cap(2,\infty)$.
\end{corollary}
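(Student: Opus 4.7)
The plan is to directly invoke Theorem~\ref{thm:ratio-higher-order} with the specific hitting sets $\BH_{2}^{n_k}(\alpha,\beta)$, treating this as essentially a plug-in computation. First, I would verify the eligibility condition: Theorem~\ref{thm:ratio-higher-order} requires each hitting set $\BH^{n_k}$ to have cardinality $O(n_k^{\alpha_k})$ for some constant $\alpha_k$ so that Algorithm~\ref{alg:alg3} runs in polynomial time. By Corollary~\ref{thm:h2}, the cardinality of $\BH_{2}^{n_k}(\alpha,\beta)$ is at most $\frac{\nu_{\alpha,\beta}n_k^{2\ln{\nu_{\alpha,\beta}}+1}}{\ln{n_k}}$, which is $O(n_k^{2\ln\nu_{\alpha,\beta}+1})$ since $\alpha$ and $\beta$ are universal constants once fixed. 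So the polynomial cardinality requirement is satisfied.

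Next, I would read off the hitting ratio $\tau_k=\frac{\mu_{\alpha,\beta}\sqrt[p]{\ln n_k}}{\sqrt{2n_k}}$ from Corollary~\ref{thm:h2} and substitute into the lower bound of Theorem~\ref{thm:ratio-higher-order}, giving
\[
\operatorname{Alg}_{\,\ref{alg:alg3}}(\|\TT\|_{p_*})\ge\frac{1}{\delta_G}\prod_{k=1}^{d-2}\tau_k\cdot\|\TT\|_{p_*}=\frac{\mu_{\alpha,\beta}^{d-2}}{\delta_G\cdot 2^{(d-2)/2}}\prod_{k=1}^{d-2}\frac{\sqrt[\leftroot{-2}\uproot{2}p]{\ln n_k}}{\sqrt{n_k}}\cdot\|\TT\|_{p_*}.
\]
Since $p$, $d$, $\alpha$, $\beta$, and $\delta_G$ are all fixed constants independent of the tensor dimensions, the prefactor $\frac{\mu_{\alpha,\beta}^{d-2}}{\delta_G\cdot 2^{(d-2)/2}}$ is a positive universal constant and gets absorbed into the $\Omega(\cdot)$ notation. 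The upper bound $\operatorname{Alg}_{\,\ref{alg:alg3}}(\|\TT\|_{p_*})\le\|\TT\|_{p_*}$ comes for free from Theorem~\ref{thm:ratio-higher-order}.

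There is no real obstacle here; the work has already been done in establishing Theorem~\ref{thm:ratio-higher-order} (which reduces the problem to hitting-set quality) and Corollary~\ref{thm:h2} (which produces the explicit $\Omega(\sqrt[p]{\ln n}/\sqrt{n})$-hitting set with polynomial cardinality). The statement is a clean combination of these two ingredients, entirely parallel to the proof of Corollary~\ref{cor:nonbridge-gap-determ} with $\BH_1^{n_k}$ replaced by $\BH_2^{n_k}$. Consequently, the proof I would write is essentially a one-line substitution plus the remark that the product of universal constants is absorbed into the asymptotic notation.
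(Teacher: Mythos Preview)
Your proposal is correct and matches the paper's approach exactly: the paper simply states that Corollary~\ref{cor:bridge-gap-determ} follows ``by directly applying Theorem~\ref{thm:ratio-higher-order} with the hitting sets $\BH_{2}^n(\alpha,\beta)$ in Section~\ref{sec:brieden-hitting-set}'' and gives no further argument. Your verification of the polynomial cardinality condition and the absorption of constants into $\Omega(\cdot)$ is precisely the intended substitution.
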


In terms of the approximation bounds, Corollary~\ref{cor:bridge-gap-determ} improves Corollary~\ref{cor:nonbridge-gap-determ} while the latter improves the bounds by manipulating matrices in Section~\ref{sec:alg1}. 
Well, the most significant result is that the best bound, $\Omega(\prod_{k=1}^{d-2} \sqrt[p]{\ln{n_k}}/\sqrt{n_k})$ in Corollary~\ref{cor:bridge-gap-determ}, matches the best-known approximation bound of its dual norm (the tensor spectral $p$-norm)~\cite[Theorem~7]{hou2014hardness} by a deterministic polynomial-time algorithm. 

\subsection{Randomized approximations via $\ell_p$-sphere covering}\label{sec:alg3}

As shown in Section~\ref{sec:rand-hitting-set}, with the help of randomization, the randomized hitting set $\BH_3^n(\epsilon)$ enjoys the best possible hitting ratio. This can also be applied to Algorithm~\ref{alg:alg3} to improve the approximation bound of the tensor nuclear $p$-norm. For every $\ell_p$-sphere, one can sample a sufficiently large number of vectors on $\BS_p^n$ to construct a hitting set with high probability by Theorem~\ref{thm:rand-hitting-set}. Once all $d-2$ hitting sets are constructed successfully, they can be applied to Algorithm~\ref{alg:alg3} directly. However, we need to take a little bit care of the overall probability. As $d-2$ hitting sets need to be input for Algorithm~\ref{alg:alg3} independently, if the success rate for every randomized hitting set is set as $1-\epsilon$, then the overall success rate would be $(1-\epsilon)^{d-2}\ge1-\epsilon(d-2)$. Therefore, in order to make the overall success rate to be $1-\epsilon$, it suffices to set the success rate of each hitting set to be $1-\frac{\epsilon}{d-2}$. Here are the randomized algorithm and its theoretical guarantee. 

\begin{algorithm}[!h]
\begin{algorithmic}[1]
\REQUIRE A tensor $\TT\in\R^{n_1\times n_2\times \dots\times n_d}$, a constant $p\in\BQ\cap(2,\infty)$ and a tolerance $\epsilon\in(0,1)$.
\ENSURE An approximation of $\|\TT\|_{p_*}$ with probability at least $1-\epsilon$.
\STATE For every $k=1,2,\dots,d-2$, generate $\lceil\delta_3{n_k}^{\delta_2}( (\frac{1}{2}+\frac{1}{q}){n_k} \ln {n_k}+\ln \frac{d-2}{\epsilon})\rceil$ samples of random vectors independently and evenly on $\BS^{n_k}_p$ to form a hitting set $\BH_3^{n_k}(\frac{d-2}{\epsilon})$.
\RETURN $\operatorname{Alg}_{\,\ref{alg:alg3}}(\|\TT\|_{p_*})$ with $\BH^{n_k}=\BH_3^{n_k}(\frac{d-2}{\epsilon})$ for $k=1,2,\dots,d-2$.
\end{algorithmic}
\caption{Approximating the tensor nuclear $p$-norm based on randomized $\ell_p$-sphere covering}
\label{alg:rand-alg}
\end{algorithm}

\begin{theorem}
    For any $\TT\in\R^{n_1 \times n_2 \dots \times n_d}$, $p\in\BQ\cap(2,\infty)$ and $\epsilon\in(0,1)$, Algorithm~\ref{alg:rand-alg} is a randomized polynomial-time algorithm whose output $\operatorname{Alg}_{\,\ref{alg:rand-alg}}(\|\TT\|_{p_*})$ satisfies
    $$
    \Omega\left(\prod_{k=1}^{d-2} \sqrt{\frac{\ln{n_k}}{n_k}}\right)\|\TT\|_{p_*} \le \operatorname{Alg}_{\,\ref{alg:rand-alg}}(\|\TT\|_{p_*})
    \text{ and }
    \Prob\left\{\operatorname{Alg}_{\,\ref{alg:rand-alg}}(\|\TT\|_{p_*})\le\|\TT\|_{p_*}\right\}\ge 1-\epsilon.
    $$
\end{theorem}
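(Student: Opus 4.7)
My plan is to decompose the claim into a deterministic piece (the lower bound on $\operatorname{Alg}_{\,\ref{alg:rand-alg}}(\|\TT\|_{p_*})$) and a probabilistic piece (the upper bound with its $1-\epsilon$ guarantee and the polynomial running time). The key observation is that the lower bound portion of Theorem~\ref{thm:ratio-higher-order} only needs each input set to be a finite subset of the corresponding $\ell_p$-sphere, whereas its upper bound portion genuinely requires each input to achieve its advertised hitting ratio. Since Algorithm~\ref{alg:rand-alg} is nothing but Algorithm~\ref{alg:alg3} fed with random samples, the deterministic lower bound should come for free while only the upper bound costs us probabilistically.

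For the lower bound, I would trace through the argument in the proof of Theorem~\ref{thm:approx-ratio-three} and observe that the inequality $\|\TT\|_{p_*}/\delta_G \le u$ follows purely from the fact that restricting the infinite family of constraints in~\eqref{eq:nuclear-pnorm-SDP} to any finite subset $\BH^{n_k}\subseteq\BS^{n_k}_p$ makes the SDP a relaxation. The random samples produced in Step~1 of Algorithm~\ref{alg:rand-alg} certainly lie on the respective $\ell_p$-spheres, so this holds on every sample outcome. Multiplying by the scaling $\prod_{k=1}^{d-2}\tau_k$ with $\tau_k=\sqrt{\delta_0\ln n_k/(2n_k)}$ hard-coded as the claimed hitting ratio in Algorithm~\ref{alg:alg3} then yields the deterministic lower bound of order $\Omega\big(\prod_{k=1}^{d-2}\sqrt{\ln n_k/n_k}\big)\|\TT\|_{p_*}$.

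For the probabilistic upper bound, Theorem~\ref{thm:rand-hitting-set} applied with per-mode tolerance $\epsilon/(d-2)$ guarantees that the $k$-th random sample is a genuine $\sqrt{\delta_0\ln n_k/(2n_k)}$-hitting set of $\BS^{n_k}_p$ with probability at least $1-\frac{\epsilon}{d-2}$. The $d-2$ samplings are independent across modes, so a union bound delivers the event $\mathcal{A}$ on which all of them simultaneously realize their hitting ratios, with $\Prob(\mathcal{A})\ge 1-\epsilon$. On $\mathcal{A}$ the hypotheses of Theorem~\ref{thm:ratio-higher-order} are fully satisfied, and its upper bound immediately gives $\operatorname{Alg}_{\,\ref{alg:rand-alg}}(\|\TT\|_{p_*})\le\|\TT\|_{p_*}$, matching the stated probability.

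Polynomial-time complexity drops out from the explicit sample-size formula of Theorem~\ref{thm:rand-hitting-set}: each hitting set has cardinality $O\big(n_k^{\delta_2+1}\ln n_k+n_k^{\delta_2}\ln(d/\epsilon)\big)$, so the SDP solved inside Algorithm~\ref{alg:alg3} remains of polynomial size in $n_1,\ldots,n_d$ and $\ln(1/\epsilon)$. There is no real obstacle in this proof; the only subtlety, and essentially the whole conceptual content, is the probability budgeting: one must allocate confidence $\epsilon/(d-2)$ per mode rather than $\epsilon$ so that the union bound matches the required overall $1-\epsilon$, and this only adds a harmless $\ln(d-2)$ factor to the sample sizes that is absorbed into the polynomial bound.
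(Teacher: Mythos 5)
Your proof is correct and matches the approach the paper implies; the paper actually omits an explicit proof for this theorem, giving only the probability-budgeting remark in the prose before Algorithm~\ref{alg:rand-alg} and otherwise relying on Theorem~\ref{thm:ratio-higher-order} and Theorem~\ref{thm:rand-hitting-set}. Your argument reconstructs exactly that: the union bound (or independence plus Bernoulli, as the paper phrases it) with per-mode tolerance $\epsilon/(d-2)$ gives the $1-\epsilon$ event on which Theorem~\ref{thm:ratio-higher-order}'s hypotheses hold, and cardinalities polynomial in the $n_k$'s and $\ln(1/\epsilon)$ give polynomial time. The one genuinely clarifying observation you add, which the paper leaves implicit in the asymmetric way it states the two inequalities, is that the lower bound $u\ge\|\TT\|_{p_*}/\delta_G$ holds for \emph{every} realization because dropping constraints in~\eqref{eq:nuclear-pnorm-SDP} only enlarges the feasible region, so only the upper bound spends the probability budget; this is a correct reading of the proof of Theorem~\ref{thm:approx-ratio-three} and makes the stated form of the conclusion transparent.
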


The approximation bound, $\Omega(\prod_{k=1}^{d-2} \sqrt{\ln{n_k}/n_k})$, matches the best-known approximation bound for the tensor spectral $p$-norm by a randomized algorithm~\cite[Theorem~8]{hou2014hardness}. Together with the approximation bound in Corollary~\ref{cor:bridge-gap-determ}, the approximation bounds of the tensor nuclear $p$-norm derived in this paper, exactly match \li{those} of the tensor spectral $p$-norm, no matter by deterministic polynomial-time algorithms or randomized ones.

\subsection{\li{Numerical approximation results}}\label{sec:numerical}
{\color{black} Let us evaluate the numerical performance of the approximation methods for the nuclear $p$-norm studied in this section (cf.~Table~\ref{tab:alg-ratios}). All the experiments\footnote{The source code of the experiments is publicly available at \url{https://github.com/seemjwguan/lp-sphere-covering}.} are implemented in MATLAB 2020b on a Ubuntu computer with an Intel Core i9-10900K CPU and 64GB of memory. We use MATLAB Tensor Toolbox 2.6~\cite{TTB_Software} whenever tensor operations are called and use MOSEK~\cite{mosek} to solve the SDPs embedded in some of these algorithms. 

All the test instances in the experiments are set as $p=d=3$. To implement the randomized algorithm, i.e., Algorithm~\ref{alg:rand-alg}, we set the failure probability $\epsilon=0.05$ and use the existing estimates in~\cite[Remark~1]{hou2014hardness} for the various universal constants involved in. As the estimated constant $\delta_3\approx 1,926$ is too large to make the resulting SDP practically solvable, we decrease it to $20$. For the uniform sampling on the $\ell_p$-spheres, i.e., the distribution introduced in Lemma~\ref{lma:lp-sampling}, we use the inverse transform sampling method; see e.g.,~\cite[Section~2.2]{devroye2018non}. For the hitting set $\BH_{H}^n(\alpha, \beta)$ defined in Algorithm~\ref{alg:alg1}, we set $\alpha=\frac{5+\sqrt{33}}{2}$ and $\beta=\alpha+1$, leading to the largest hitting ratio when $p=3$.

In order to understand the true approximation performance of the methods, we need to identify some tensor instances whose nuclear $p$-norms can be computed exactly, instead of using their upper bounds. This turns out to be a nontrivial task. To begin with, we first observe the following spectral $p$-norm characterization of the so-called identity tensor. 
\begin{lemma}\label{lma:identity}
    If $n,d\in\BN$ and $p \in [2,\infty]$, then
    $$
    \left\|\sum_{i=1}^n{\be_i}^{\otimes d}\right\|_{p_\sigma}=\begin{cases}
        1 & p\le d \\
        n^{1-d/p} & p> d.
    \end{cases}
    $$
\end{lemma}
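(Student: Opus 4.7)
The plan is to attack the identity-tensor spectral $p$-norm directly through its variational definition. By the definition in~\eqref{def:spectral},
\[
\left\|\sum_{i=1}^n{\be_i}^{\otimes d}\right\|_{p_\sigma}
= \max\left\{\sum_{i=1}^n \prod_{k=1}^{d}(x_k)_i : \|\bx_k\|_p=1,\, k=1,2,\dots,d\right\},
\]
so the task reduces to bounding a separable product-sum over the Cartesian product of $\ell_p$-spheres, both from above and from below.

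For the upper bound, my first step is to apply the generalized H\"older's inequality with all $d$ exponents equal to $d$ (legal since $\sum_{k=1}^{d}\tfrac{1}{d}=1$) to obtain
\[
\sum_{i=1}^{n}\prod_{k=1}^{d}(x_k)_i \le \sum_{i=1}^{n}\prod_{k=1}^{d}|(x_k)_i| \le \prod_{k=1}^{d}\|\bx_k\|_d.
\]
Then I convert each $\|\bx_k\|_d$ into $\|\bx_k\|_p=1$ using the norm equivalence in Lemma~\ref{lma:lp-norm-equiv}. When $p\le d$, this gives $\|\bx_k\|_d\le\|\bx_k\|_p=1$, hence the product is at most $1$. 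When $p>d$, Lemma~\ref{lma:lp-norm-equiv} with $r=d$ yields $\|\bx_k\|_d\le n^{1/d-1/p}\|\bx_k\|_p=n^{1/d-1/p}$, so the product is at most $n^{1-d/p}$.

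For matching lower bounds, I will exhibit explicit optimal solutions. When $p\le d$, take $\bx_k=\be_1$ for all $k$: each lies on $\BS_p^n$ and $\sum_i\prod_k(\be_1)_i=1$. When $p>d$, take $\bx_k=n^{-1/p}\bd{1}_n$ for all $k$: $\|\bx_k\|_p=n^{-1/p}\cdot n^{1/p}=1$ and $\sum_i\prod_k(x_k)_i=n\cdot n^{-d/p}=n^{1-d/p}$. The case $p=\infty$ is handled by the second construction with $\bx_k=\bd{1}_n$, matching $n^{1-d/p}=n$.

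There is no real obstacle here: the argument hinges only on generalized H\"older's inequality and the standard monotonicity/equivalence of $\ell_p$-norms, together with two one-line constructions. The only mildly delicate point is to recognize that the \emph{same} H\"older bound (with exponents all equal to $d$) serves both regimes, with the regime-dependence entering solely through the direction of the norm comparison between $\|\cdot\|_d$ and $\|\cdot\|_p$; this is what produces the dichotomy $1$ versus $n^{1-d/p}$ at the threshold $p=d$.
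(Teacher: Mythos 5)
Your proof is correct and takes a genuinely different route from the paper's. The paper leans on the $\ell_p$-version of Banach's theorem (cited as Corollary~5 of Nikiforov), which for $p\ge 2$ and nonnegative symmetric tensors lets one restrict the maximization to the diagonal $\bx_1=\dots=\bx_d$; the problem then collapses to $\max\{\|\bx\|_d^{\,d}:\|\bx\|_p=1\}$, and the two-case answer drops out of Lemma~\ref{lma:lp-norm-equiv}. You bypass Banach's theorem entirely: generalized H\"older with all $d$ exponents equal to $d$ gives the upper bound $\prod_k\|\bx_k\|_d$ directly on the multilinear form, the same norm-equivalence lemma converts this into the constraints $\|\bx_k\|_p=1$, and the two explicit feasible points ($\be_1$ for $p\le d$, the normalized all-ones vector for $p>d$, and $\bd{1}_n$ for $p=\infty$) give matching lower bounds. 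Your argument is more elementary, and as a bonus it is valid for all $p\in[1,\infty]$ rather than only $p\in[2,\infty]$, since the restriction $p\ge 2$ in the paper's proof comes purely from the hypothesis of the Banach-type theorem. What the paper's route buys is that the diagonal-restriction step is a reusable structural fact for any nonnegative symmetric tensor, whereas your H\"older step is a sharp estimate tailored to the identity tensor; for this particular lemma both close the argument with the same amount of work.
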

\begin{proof}
    We notice that $\sum_{i=1}^n{\be_i}^{\otimes d}$ is nonnegative and symmetric. Since $p \in [2,\infty]$, by~\cite[Corollary~5]{nikiforov2019p} (the $\ell_p$-version of Banach's theorem), we have
    $$
    \left\|\sum_{i=1}^n{\be_i}^{\otimes d}\right\|_{p_\sigma}
    =\max\left\{\sum_{i=1}^n {x_i}^d:\sum_{i=1}^n |x_i|^p = 1\right\}
    =\max\left\{\sum_{i=1}^n |x_i|^d:\sum_{i=1}^n |x_i|^p = 1\right\}.
    $$
    The desired result follows immediately from the bounds between $\ell_p$-norms (Lemma~\ref{lma:lp-norm-equiv}).
\end{proof}
With the help of Lemma~\ref{lma:identity}, we can single out a large class of nonnegative symmetric tensors whose nuclear $p$-norm can be determined by inspection.




\begin{corollary}\label{cor:random}
    If $n,d,r\in\BN$, $p=d\ge2$, $\lambda_i\ge0$ and $\bx_i\in\BS_p^n\cap\R_+^n$ for $i=1,2,\dots,r$, then
    $$
    \left\|\sum_{i=1}^r \lambda_i\, {\bx_i}^{\otimes d}\right\|_{p_*}=\sum_{i=1}^r\lambda_i.
    $$
\end{corollary}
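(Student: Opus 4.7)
The plan is to prove matching upper and lower bounds. The upper bound $\|\sum_{i=1}^r\lambda_i{\bx_i}^{\otimes d}\|_{p_*}\le\sum_{i=1}^r\lambda_i$ is immediate from the definition~\eqref{def:nuclear}: the given decomposition is feasible because $\lambda_i\ge 0$ and $\|\bx_i\|_p=1$, so the infimum in~\eqref{def:nuclear} is at most $\sum_{i=1}^r|\lambda_i|=\sum_{i=1}^r\lambda_i$.

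The substance of the argument is the matching lower bound, which I would establish via the spectral/nuclear duality in Lemma~\ref{lma:norm-duality}: it suffices to produce a dual certificate $\Z$ with $\|\Z\|_{p_\sigma}\le 1$ satisfying $\langle\sum_{i=1}^r\lambda_i{\bx_i}^{\otimes d},\Z\rangle\ge\sum_{i=1}^r\lambda_i$. Guided by Lemma~\ref{lma:identity}, the natural choice is the identity tensor $\Z:=\sum_{j=1}^n{\be_j}^{\otimes d}$; the hypothesis $p=d\ge 2$ combined with Lemma~\ref{lma:identity} gives $\|\Z\|_{p_\sigma}=1$ exactly, so $\Z$ is admissible. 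Expanding the pairing via multilinearity,
$$
\left\langle\sum_{i=1}^r\lambda_i\,{\bx_i}^{\otimes d},\,\sum_{j=1}^n{\be_j}^{\otimes d}\right\rangle
=\sum_{i=1}^r\lambda_i\sum_{j=1}^n(\be_j^{\T}\bx_i)^d
=\sum_{i=1}^r\lambda_i\sum_{j=1}^n x_{ij}^{\,d}.
$$
The hypotheses $\bx_i\in\R_+^n$ and $p=d$ then collapse $x_{ij}^{\,d}=|x_{ij}|^p\ge 0$, so $\sum_{j=1}^n x_{ij}^{\,d}=\|\bx_i\|_p^p=1$ for every $i$, and the pairing equals $\sum_{i=1}^r\lambda_i$, which closes the lower bound.

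There is essentially no obstacle; the argument is short once the dual certificate is identified. The only conceptual content is recognizing that both hypotheses are sharply used: $p=d$ to guarantee that the identity tensor is a unit-spectral-$p$-norm certificate via Lemma~\ref{lma:identity}, and $\bx_i\ge\bd{0}$ so that $x_{ij}^{\,d}$ collapses to $|x_{ij}|^p$ (this is automatic when $d$ is even but genuinely needed when $d$ is odd, where signed cancellation in $\sum_j x_{ij}^{\,d}$ would otherwise destroy the identity). The corollary thereby yields a large family of nonnegative symmetric tensors with explicitly known nuclear $p$-norm, precisely what the numerical section needs to benchmark Algorithms~\ref{alg:matricization}--\ref{alg:rand-alg} against a true optimum rather than an upper estimate.
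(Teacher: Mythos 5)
Your proof is correct and takes essentially the same route as the paper: the upper bound follows immediately from the definition of the nuclear $p$-norm, and the lower bound is obtained via duality (Lemma~\ref{lma:norm-duality}) with the identity tensor $\sum_{j=1}^n\be_j^{\otimes d}$ as the dual certificate, invoking Lemma~\ref{lma:identity} for $p=d$ to establish its unit spectral $p$-norm and the nonnegativity of the $\bx_i$'s to evaluate the pairing.
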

\begin{proof}
    Denote $\TT=\sum_{i=1}^r \lambda_i\, {\bx_i}^{\otimes d}$. By the definition of nuclear $p$-norm in~\eqref{def:nuclear}, this decomposition obviously implies that $\|\TT\|_{p_*}\le\sum_{i=1}^r \lambda_i$ since $\lambda_i\ge0$ and $\|\bx_i\|_p=1$. On the other hand, we know from Lemma~\ref{lma:identity} that $\left\|\sum_{j=1}^n{\be_j}^{\otimes d}\right\|_{p_\sigma}=1$ as $p=d$. Therefore, by the duality between the spectral $p$-norm and nuclear $p$-norm (Lemma~\ref{lma:norm-duality}), we have
    $$
    \|\TT\|_{p_*}\ge\left\langle\TT,\sum_{j=1}^n{\be_j}^{\otimes d}\right\rangle=\sum_{i=1}^r \lambda_i\left\langle{\bx_i}^{\otimes d},\sum_{j=1}^n{\be_j}^{\otimes d}\right\rangle=\sum_{i=1}^r \lambda_i {\|\bx_i\|_d}^d =\sum_{i=1}^r \lambda_i {\|\bx_i\|_p}^p= \sum_{i=1}^r \lambda_i,
    $$
    where the second equality is due to the nonnegativity of $\bx_i$'s. This completes the proof.   
\end{proof}



All the tensor instances are generated using the construction in Corollary~\ref{cor:random} so that their nuclear $p$-norms can be computed directly for comparison. In particular, $\lambda_i$'s follow i.i.d.\ uniform distribution on $[0,1]$ and $\bx_i$'s are independent of each other. To construct each $\bx_i$, we first generate a vector whose entries follow i.i.d.\ uniform distribution on $[0,1]$ and normalize it to make $\|\bx_i\|_p=1$. Twenty instances are generated for each $n=3,5,7,10$ and each $r=1,2,3,4,5,10$ with $p=d=3$. 
We then run these algorithms and record their minimum, average and maximum approximation ratios as well as average running time. 
The experimental results, together with the theoretical approximation bounds in Table~\ref{tab:alg-ratios} for comparison, are shown in Tables~\ref{tab:n=3}--\ref{tab:n=10}. 
We summarize the key observations.
\begin{itemize}
    \item The numerical results verify, and in fact clearly outperform the theoretical bounds for all the proposed algorithms in this section.
    \item The approximation bounds obtained by Algorithm~\ref{alg:alg3} with $\BH_{2}^{n}$ and Algorithm~\ref{alg:rand-alg} perform the best. Although Algorithm~\ref{alg:alg3} with $\BH_{1}^{n}$ is also based on sphere covering, its performance is not competitive. This pretty much matches the worst hitting ratio compared with the other two methods based on sphere covering. 
    \item Comparing between Algorithm~\ref{alg:alg3} with $\BH_{2}^{n}$ and Algorithm~\ref{alg:rand-alg}, the approximation bounds of the former are worse when $n=3,5$ but better when $n$ goes large. Overall, the former consistently obtains excellent approximation bounds across almost all settings, and enjoys much lower running times than the latter.
    \item For rank-one tensor instances, the first three algorithms obtain the optimality since the vectorization, matricization and matrix partitioning of a rank-one tensor preserve the tensor nuclear $p$-norm~\cite[Proposition~2.4]{chen2020tensor}.
    However, the performance of these methods deteriorates quickly when $r$ goes large. 
    \item Although the algorithms based on sphere covering achieve better approximation bounds than the first three algorithms based on vectorization or matrix operation, they obviously run much slower. It is important for the practitioner to balance the trade-off between effectiveness and efficiency. 
\end{itemize}

\begin{table}[!ht]
\centering
\caption{Approximation bounds of the tensor nuclear $p$-norm by algorithms in~Table~\ref{tab:alg-ratios} when $p=d=3$ and $n=3$.}
\label{tab:n=3}
\resizebox{\textwidth}{!}{%
\begin{tabular}{|ll|c|c|c|c|c|c|}
\hline
\multicolumn{2}{|l|}{Algorithm}                                      & \cite[Prop.~4.3]{chen2020tensor} & Alg.~\ref{alg:matricization} & Alg.~\ref{alg:partition} & Alg.~\ref{alg:alg3} ($\BH_{1}^{n}$) & Alg.~\ref{alg:alg3} ($\BH_{2}^{n}$) & Alg.~\ref{alg:rand-alg} \\ \hline
\multicolumn{2}{|l|}{Theoretical bound}                                      & $\frac{1}{\sqrt[\leftroot{-2}\uproot{2}q]{n^2}}\approx 0.2311$ & \multicolumn{2}{c|}{$\frac{1}{\sqrt[\leftroot{-2}\uproot{2}q]{n}}\approx 0.4807$} &  $\sqrt[\leftroot{-2}\uproot{2}q]{\frac{\ln n}{n}}\approx 0.5119$ &  $\frac{\sqrt[\leftroot{-2}\uproot{2}p]{\ln{n}}}{\sqrt{n}}\approx 0.5957$ &  $\sqrt{\frac{\ln{n}}{n}}\approx 0.6051$ \\ \hline\hline
\multicolumn{1}{|l|}{\multirow{4}{*}{$r=1$}}  & Ave ratio & 1.0000                           & 1.0000                       & 1.0000                   & 0.6900                                & 0.9398                                & 0.9834                  \\
\multicolumn{1}{|l|}{}                        & Min ratio & 1.0000                           & 1.0000                       & 1.0000                   & 0.5522                                & 0.8325                                & 0.9534                  \\
\multicolumn{1}{|l|}{}                        & Max ratio & 1.0000                           & 1.0000                       & 1.0000                   & 0.9466                                & 0.9909                                & 0.9996                  \\ \cline{2-8} 
\multicolumn{1}{|l|}{}                        & Ave time  & 0.0002                           & 0.4846                       & 1.0309                   & 3.2367                                & 3.5194                                & 25.6326                 \\ \hline
\hline\multicolumn{1}{|l|}{\multirow{4}{*}{$r=2$}}  & Ave ratio & 0.8690                           & 0.9302                       & 0.8967                   & 0.6471                                & 0.9630                                & 0.9895                  \\
\multicolumn{1}{|l|}{}                        & Min ratio & 0.6612                           & 0.8190                       & 0.7215                   & 0.5715                                & 0.8575                                & 0.9629                  \\
\multicolumn{1}{|l|}{}                        & Max ratio & 0.9993                           & 0.9996                       & 0.9995                   & 0.7897                                & 0.9989                                & 0.9999                  \\ \cline{2-8} 
\multicolumn{1}{|l|}{}                        & Ave time  & 0.0002                           & 0.4832                       & 1.0225                   & 2.3310                                & 3.5282                                & 25.9662                 \\ \hline
\hline\multicolumn{1}{|l|}{\multirow{4}{*}{$r=3$}}  & Ave ratio & 0.8612                           & 0.9256                       & 0.8977                   & 0.6594                                & 0.9589                                & 0.9965                  \\
\multicolumn{1}{|l|}{}                        & Min ratio & 0.6120                           & 0.8010                       & 0.7073                   & 0.5660                                & 0.8792                                & 0.9770                  \\
\multicolumn{1}{|l|}{}                        & Max ratio & 0.9725                           & 0.9837                       & 0.9801                   & 0.8463                                & 0.9980                                & 1.0000                  \\ \cline{2-8} 
\multicolumn{1}{|l|}{}                        & Ave time  & 0.0002                           & 0.4868                       & 1.0267                   & 2.6966                                & 3.5286                                & 25.9420                 \\ \hline
\hline\multicolumn{1}{|l|}{\multirow{4}{*}{$r=4$}}  & Ave ratio & 0.7919                           & 0.8894                       & 0.8445                   & 0.6728                                & 0.9638                                & 0.9995                  \\
\multicolumn{1}{|l|}{}                        & Min ratio & 0.6507                           & 0.8101                       & 0.7119                   & 0.5621                                & 0.9282                                & 0.9962                  \\
\multicolumn{1}{|l|}{}                        & Max ratio & 0.9548                           & 0.9680                       & 0.9630                   & 0.8324                                & 0.9925                                & 1.0000                  \\ \cline{2-8} 
\multicolumn{1}{|l|}{}                        & Ave time  & 0.0002                           & 0.4915                       & 1.0119                   & 2.0906                                & 3.5098                                & 25.7583                 \\ \hline
\hline\multicolumn{1}{|l|}{\multirow{4}{*}{$r=5$}}  & Ave ratio & 0.7553                           & 0.8706                       & 0.8180                   & 0.6777                                & 0.9521                                & 0.9987                  \\
\multicolumn{1}{|l|}{}                        & Min ratio & 0.6486                           & 0.8123                       & 0.7149                   & 0.5792                                & 0.9136                                & 0.9816                  \\
\multicolumn{1}{|l|}{}                        & Max ratio & 0.8962                           & 0.9475                       & 0.9306                   & 0.7630                                & 0.9911                                & 1.0000                  \\ \cline{2-8} 
\multicolumn{1}{|l|}{}                        & Ave time  & 0.0003                           & 0.4876                       & 1.0448                   & 2.0373                                & 3.5228                                & 25.7078                 \\ \hline
\hline\multicolumn{1}{|l|}{\multirow{4}{*}{$r=10$}} & Ave ratio & 0.7148                           & 0.8486                       & 0.7896                   & 0.6912                                & 0.9419                                & 0.9998                  \\
\multicolumn{1}{|l|}{}                        & Min ratio & 0.5849                           & 0.7719                       & 0.6662                   & 0.5978                                & 0.9150                                & 0.9960                  \\
\multicolumn{1}{|l|}{}                        & Max ratio & 0.8827                           & 0.9368                       & 0.9191                   & 0.7740                                & 0.9902                                & 1.0000                  \\ \cline{2-8} 
\multicolumn{1}{|l|}{}                        & Ave time  & 0.0002                           & 0.4782                       & 1.0449                   & 2.0715                                & 3.5343                                & 25.6905                 \\ \hline
\end{tabular}%
}
\end{table}

\begin{table}[!ht]
\centering
\caption{Approximation bounds of the tensor nuclear $p$-norm by algorithms in~Table~\ref{tab:alg-ratios} when $p=d=3$ and $n=5$.}
\label{tab:n=5}
\resizebox{\textwidth}{!}{%
\begin{tabular}{|ll|c|c|c|c|c|c|}
\hline
\multicolumn{2}{|l|}{Algorithm}                                      & \cite[Prop.~4.3]{chen2020tensor} & Alg.~\ref{alg:matricization} & Alg.~\ref{alg:partition} & Alg.~\ref{alg:alg3} ($\BH_{1}^{n}$) & Alg.~\ref{alg:alg3} ($\BH_{2}^{n}$) & Alg.~\ref{alg:rand-alg} \\ \hline
\multicolumn{2}{|l|}{Theoretical bound}                                      & $\frac{1}{\sqrt[\leftroot{-2}\uproot{2}q]{n^2}}\approx 0.1170$ & \multicolumn{2}{c|}{$\frac{1}{\sqrt[\leftroot{-2}\uproot{2}q]{n}}\approx 0.3420$} &  $\sqrt[\leftroot{-2}\uproot{2}q]{\frac{\ln n}{n}}\approx 0.4697$ &  $\frac{\sqrt[\leftroot{-2}\uproot{2}p]{\ln{n}}}{\sqrt{n}}\approx 0.5241$ &  $\sqrt{\frac{\ln{n}}{n}}\approx 0.5674$ \\ \hline\hline
\multicolumn{1}{|l|}{\multirow{4}{*}{$r=1$}}  & Ave ratio & 1.0000                           & 1.0000                       & 1.0000                   & 0.5165                                & 0.9015                                & 0.8883                  \\
\multicolumn{1}{|l|}{}                        & Min ratio & 1.0000                           & 1.0000                       & 1.0000                   & 0.4364                                & 0.8454                                & 0.8334                  \\
\multicolumn{1}{|l|}{}                        & Max ratio & 1.0000                           & 1.0000                       & 1.0000                   & 0.6048                                & 0.9758                                & 0.9421                  \\ \cline{2-8} 
\multicolumn{1}{|l|}{}                        & Ave time  & 0.0002                           & 0.9192                       & 2.1826                   & 6.1271                                & 31.8846                               & 110.8029                \\ \hline
\hline\multicolumn{1}{|l|}{\multirow{4}{*}{$r=2$}}  & Ave ratio & 0.8575                           & 0.9238                       & 0.8949                   & 0.5202                                & 0.9444                                & 0.9330                  \\
\multicolumn{1}{|l|}{}                        & Min ratio & 0.6891                           & 0.8410                       & 0.7620                   & 0.4669                                & 0.8679                                & 0.8854                  \\
\multicolumn{1}{|l|}{}                        & Max ratio & 0.9981                           & 0.9989                       & 0.9987                   & 0.6202                                & 0.9958                                & 0.9747                  \\ \cline{2-8} 
\multicolumn{1}{|l|}{}                        & Ave time  & 0.0002                           & 0.9186                       & 2.3024                   & 6.1299                                & 32.1755                               & 113.0247                \\ \hline
\hline\multicolumn{1}{|l|}{\multirow{4}{*}{$r=3$}}  & Ave ratio & 0.7912                           & 0.8918                       & 0.8498                   & 0.5425                                & 0.9678                                & 0.9350                  \\
\multicolumn{1}{|l|}{}                        & Min ratio & 0.6898                           & 0.8219                       & 0.7588                   & 0.4727                                & 0.9293                                & 0.9070                  \\
\multicolumn{1}{|l|}{}                        & Max ratio & 0.9356                           & 0.9665                       & 0.9569                   & 0.7998                                & 0.9994                                & 0.9756                  \\ \cline{2-8} 
\multicolumn{1}{|l|}{}                        & Ave time  & 0.0002                           & 0.9310                       & 2.1886                   & 6.1244                                & 32.1675                               & 112.5726                \\ \hline
\hline\multicolumn{1}{|l|}{\multirow{4}{*}{$r=4$}}  & Ave ratio & 0.7488                           & 0.8728                       & 0.8210                   & 0.5254                                & 0.9662                                & 0.9681                  \\
\multicolumn{1}{|l|}{}                        & Min ratio & 0.6293                           & 0.8060                       & 0.7279                   & 0.4655                                & 0.9399                                & 0.9505                  \\
\multicolumn{1}{|l|}{}                        & Max ratio & 0.8831                           & 0.9444                       & 0.9231                   & 0.6027                                & 0.9960                                & 0.9826                  \\ \cline{2-8} 
\multicolumn{1}{|l|}{}                        & Ave time  & 0.0002                           & 0.9060                       & 2.1791                   & 6.1586                                & 32.0104                               & 112.1802                \\ \hline
\hline\multicolumn{1}{|l|}{\multirow{4}{*}{$r=5$}}  & Ave ratio & 0.7503                           & 0.8702                       & 0.8221                   & 0.5269                                & 0.9647                                & 0.9537                  \\
\multicolumn{1}{|l|}{}                        & Min ratio & 0.6115                           & 0.7853                       & 0.7012                   & 0.4831                                & 0.9129                                & 0.9196                  \\
\multicolumn{1}{|l|}{}                        & Max ratio & 0.8690                           & 0.9234                       & 0.9020                   & 0.5943                                & 0.9984                                & 0.9851                  \\ \cline{2-8} 
\multicolumn{1}{|l|}{}                        & Ave time  & 0.0002                           & 0.9099                       & 2.1871                   & 6.1173                                & 31.9779                               & 111.8542                \\ \hline
\hline\multicolumn{1}{|l|}{\multirow{4}{*}{$r=10$}} & Ave ratio & 0.6464                           & 0.8127                       & 0.7457                   & 0.5725                                & 0.9361                                & 0.9865                  \\
\multicolumn{1}{|l|}{}                        & Min ratio & 0.5443                           & 0.7465                       & 0.6478                   & 0.5153                                & 0.9005                                & 0.9670                  \\
\multicolumn{1}{|l|}{}                        & Max ratio & 0.7685                           & 0.8849                       & 0.8454                   & 0.6581                                & 0.9766                                & 0.9973                  \\ \cline{2-8} 
\multicolumn{1}{|l|}{}                        & Ave time  & 0.0002                           & 0.9188                       & 2.1997                   & 6.1855                                & 31.9544                               & 111.2414                \\ \hline
\end{tabular}%
}
\end{table}

\begin{table}[!ht]
\centering
\caption{Approximation bounds of the tensor nuclear $p$-norm by algorithms in~Table~\ref{tab:alg-ratios} when $p=d=3$ and $n=7$.}
\label{tab:n=7}
\resizebox{\textwidth}{!}{%
\begin{tabular}{|ll|c|c|c|c|c|c|}
\hline
\multicolumn{2}{|l|}{Algorithm}                                      & \cite[Prop.~4.3]{chen2020tensor} & Alg.~\ref{alg:matricization} & Alg.~\ref{alg:partition} & Alg.~\ref{alg:alg3} ($\BH_{1}^{n}$) & Alg.~\ref{alg:alg3} ($\BH_{2}^{n}$) & Alg.~\ref{alg:rand-alg} \\ \hline
\multicolumn{2}{|l|}{Theoretical bound}                                      & $\frac{1}{\sqrt[\leftroot{-2}\uproot{2}q]{n^2}}\approx 0.0747$ & \multicolumn{2}{c|}{$\frac{1}{\sqrt[\leftroot{-2}\uproot{2}q]{n}}\approx 0.2733$} &  $\sqrt[\leftroot{-2}\uproot{2}q]{\frac{\ln n}{n}}\approx 0.4259$ &  $\frac{\sqrt[\leftroot{-2}\uproot{2}p]{\ln{n}}}{\sqrt{n}}\approx 0.4719$ &  $\sqrt{\frac{\ln{n}}{n}}\approx 0.5272$ \\ \hline\hline
\multicolumn{1}{|l|}{\multirow{4}{*}{$r=1$}}  & Ave ratio & 1.0000                           & 1.0000                       & 1.0000                   & 0.4142                                & 0.9110                                & 0.8078                  \\
\multicolumn{1}{|l|}{}                        & Min ratio & 1.0000                           & 1.0000                       & 1.0000                   & 0.3625                                & 0.8258                                & 0.7366                  \\
\multicolumn{1}{|l|}{}                        & Max ratio & 1.0000                           & 1.0000                       & 1.0000                   & 0.6394                                & 0.9858                                & 0.8811                  \\ \cline{2-8} 
\multicolumn{1}{|l|}{}                        & Ave time  & 0.0003                           & 1.6829                       & 3.7467                   & 13.0423                               & 481.2554                              & 391.0691                \\ \hline
\hline\multicolumn{1}{|l|}{\multirow{4}{*}{$r=2$}}  & Ave ratio & 0.8256                           & 0.9081                       & 0.8675                   & 0.4469                                & 0.9399                                & 0.8767                  \\
\multicolumn{1}{|l|}{}                        & Min ratio & 0.6663                           & 0.8218                       & 0.7243                   & 0.3906                                & 0.8638                                & 0.8157                  \\
\multicolumn{1}{|l|}{}                        & Max ratio & 0.9917                           & 0.9932                       & 0.9927                   & 0.6131                                & 0.9904                                & 0.9296                  \\ \cline{2-8} 
\multicolumn{1}{|l|}{}                        & Ave time  & 0.0003                           & 1.7272                       & 3.7273                   & 13.1360                               & 488.9772                              & 404.3527                \\ \hline
\hline\multicolumn{1}{|l|}{\multirow{4}{*}{$r=3$}}  & Ave ratio & 0.8152                           & 0.9047                       & 0.8693                   & 0.4253                                & 0.9557                                & 0.8558                  \\
\multicolumn{1}{|l|}{}                        & Min ratio & 0.6836                           & 0.8351                       & 0.7615                   & 0.3883                                & 0.8890                                & 0.7887                  \\
\multicolumn{1}{|l|}{}                        & Max ratio & 0.9402                           & 0.9648                       & 0.9568                   & 0.4606                                & 0.9943                                & 0.9089                  \\ \cline{2-8} 
\multicolumn{1}{|l|}{}                        & Ave time  & 0.0002                           & 1.6995                       & 3.7218                   & 13.0822                               & 486.5939                              & 403.7624                \\ \hline
\hline\multicolumn{1}{|l|}{\multirow{4}{*}{$r=4$}}  & Ave ratio & 0.7351                           & 0.8604                       & 0.8064                   & 0.4581                                & 0.9687                                & 0.8893                  \\
\multicolumn{1}{|l|}{}                        & Min ratio & 0.6369                           & 0.8059                       & 0.7202                   & 0.4131                                & 0.9410                                & 0.7995                  \\
\multicolumn{1}{|l|}{}                        & Max ratio & 0.9449                           & 0.9685                       & 0.9599                   & 0.4967                                & 0.9881                                & 0.9275                  \\ \cline{2-8} 
\multicolumn{1}{|l|}{}                        & Ave time  & 0.0003                           & 1.6525                       & 3.7735                   & 13.2003                               & 488.9220                              & 399.6296                \\ \hline
\hline\multicolumn{1}{|l|}{\multirow{4}{*}{$r=5$}}  & Ave ratio & 0.6917                           & 0.8407                       & 0.7799                   & 0.4671                                & 0.9657                                & 0.9155                  \\
\multicolumn{1}{|l|}{}                        & Min ratio & 0.5961                           & 0.7890                       & 0.6953                   & 0.4130                                & 0.9382                                & 0.8774                  \\
\multicolumn{1}{|l|}{}                        & Max ratio & 0.8302                           & 0.9150                       & 0.8863                   & 0.5001                                & 0.9910                                & 0.9483                  \\ \cline{2-8} 
\multicolumn{1}{|l|}{}                        & Ave time  & 0.0002                           & 1.7021                       & 3.7189                   & 13.0873                               & 487.7752                              & 396.6553                \\ \hline
\hline\multicolumn{1}{|l|}{\multirow{4}{*}{$r=10$}} & Ave ratio & 0.6345                           & 0.8090                       & 0.7397                   & 0.4721                                & 0.9375                                & 0.9364                  \\
\multicolumn{1}{|l|}{}                        & Min ratio & 0.5215                           & 0.7345                       & 0.6412                   & 0.4339                                & 0.9025                                & 0.9078                  \\
\multicolumn{1}{|l|}{}                        & Max ratio & 0.7418                           & 0.8642                       & 0.8208                   & 0.5251                                & 0.9749                                & 0.9647                  \\ \cline{2-8} 
\multicolumn{1}{|l|}{}                        & Ave time  & 0.0002                           & 1.6739                       & 3.9940                   & 13.0368                               & 487.8102                              & 395.5568                \\ \hline
\end{tabular}%
}
\end{table}

\begin{table}[!ht]
\centering
\caption{Approximation bounds of the tensor nuclear $p$-norm by algorithms in~Table~\ref{tab:alg-ratios} when $p=d=3$ and $n=10$.}
\label{tab:n=10}
\resizebox{\textwidth}{!}{%
\begin{tabular}{|ll|c|c|c|c|c|c|}
\hline
\multicolumn{2}{|l|}{Algorithm}                               & \cite[Prop.~4.3]{chen2020tensor} & Alg.~\ref{alg:matricization} & Alg.~\ref{alg:partition} & Alg.~\ref{alg:alg3} ($\BH_{1}^{n}$) & Alg.~\ref{alg:alg3} ($\BH_{2}^{n}$) & Alg.~\ref{alg:rand-alg} \\ \hline
\multicolumn{2}{|l|}{Theoretical bound}                                      & $\frac{1}{\sqrt[\leftroot{-2}\uproot{2}q]{n^2}}\approx 0.0464$ & \multicolumn{2}{c|}{$\frac{1}{\sqrt[\leftroot{-2}\uproot{2}q]{n}}\approx 0.2154$} &  $\sqrt[\leftroot{-2}\uproot{2}q]{\frac{\ln n}{n}}\approx 0.3757$ &  $\frac{\sqrt[\leftroot{-2}\uproot{2}p]{\ln{n}}}{\sqrt{n}}\approx 0.4176$ &  $\sqrt{\frac{\ln{n}}{n}}\approx 0.4799$ \\ \hline\hline
\multicolumn{1}{|l|}{\multirow{4}{*}{$r=1$}}  & Ave ratio & 1.0000                           & 1.0000                       & 1.0000                   & 0.4083                                & 0.7890                                & 0.7219                  \\
\multicolumn{1}{|l|}{}                        & Min ratio & 1.0000                           & 1.0000                       & 1.0000                   & 0.3671                                & 0.7093                                & 0.6768                  \\
\multicolumn{1}{|l|}{}                        & Max ratio & 1.0000                           & 1.0000                       & 1.0000                   & 0.5084                                & 0.8988                                & 0.7928                  \\ \cline{2-8} 
\multicolumn{1}{|l|}{}                        & Ave time  & 0.0157                           & 4.6467                       & 6.8902                   & 147.8669                              & 303.4447                              & 1980.3324               \\ \hline
\hline\multicolumn{1}{|l|}{\multirow{4}{*}{$r=2$}}  & Ave ratio & 0.8634                           & 0.9270                       & 0.8990                   & 0.4356                                & 0.8243                                & 0.7790                  \\
\multicolumn{1}{|l|}{}                        & Min ratio & 0.6755                           & 0.8222                       & 0.7210                   & 0.3801                                & 0.7536                                & 0.7243                  \\
\multicolumn{1}{|l|}{}                        & Max ratio & 0.9932                           & 0.9960                       & 0.9949                   & 0.5150                                & 0.8844                                & 0.8215                  \\ \cline{2-8} 
\multicolumn{1}{|l|}{}                        & Ave time  & 0.0010                           & 4.3290                       & 6.7973                   & 151.8938                              & 389.5651                              & 2519.7346               \\ \hline
\hline\multicolumn{1}{|l|}{\multirow{4}{*}{$r=3$}}  & Ave ratio & 0.7834                           & 0.8887                       & 0.8471                   & 0.4283                                & 0.8716                                & 0.7747                  \\
\multicolumn{1}{|l|}{}                        & Min ratio & 0.6141                           & 0.8050                       & 0.7120                   & 0.3930                                & 0.7815                                & 0.7373                  \\
\multicolumn{1}{|l|}{}                        & Max ratio & 0.9150                           & 0.9584                       & 0.9445                   & 0.4634                                & 0.9599                                & 0.8174                  \\ \cline{2-8} 
\multicolumn{1}{|l|}{}                        & Ave time  & 0.0030                           & 4.8250                       & 6.8015                   & 152.7054                              & 358.2596                              & 2465.2732               \\ \hline
\hline\multicolumn{1}{|l|}{\multirow{4}{*}{$r=4$}}  & Ave ratio & 0.7099                           & 0.8510                       & 0.7948                   & 0.4374                                & 0.9039                                & 0.8069                  \\
\multicolumn{1}{|l|}{}                        & Min ratio & 0.5958                           & 0.7867                       & 0.7033                   & 0.3976                                & 0.8494                                & 0.7488                  \\
\multicolumn{1}{|l|}{}                        & Max ratio & 0.8169                           & 0.9043                       & 0.8719                   & 0.5030                                & 0.9694                                & 0.8529                  \\ \cline{2-8} 
\multicolumn{1}{|l|}{}                        & Ave time  & 0.0004                           & 4.5871                       & 6.7897                   & 152.3298                              & 328.8995                              & 2303.7053               \\ \hline
\hline\multicolumn{1}{|l|}{\multirow{4}{*}{$r=5$}}  & Ave ratio & 0.6848                           & 0.8389                       & 0.7778                   & 0.4497                                & 0.9134                                & 0.7809                  \\
\multicolumn{1}{|l|}{}                        & Min ratio & 0.6128                           & 0.8046                       & 0.7197                   & 0.4182                                & 0.8347                                & 0.7220                  \\
\multicolumn{1}{|l|}{}                        & Max ratio & 0.8168                           & 0.9041                       & 0.8735                   & 0.4885                                & 0.9924                                & 0.8105                  \\ \cline{2-8} 
\multicolumn{1}{|l|}{}                        & Ave time  & 0.0004                           & 4.6683                       & 6.7397                   & 150.6115                              & 327.2175                              & 2232.9938               \\ \hline
\hline\multicolumn{1}{|l|}{\multirow{4}{*}{$r=10$}} & Ave ratio & 0.6159                           & 0.8039                       & 0.7306                   & 0.4590                                & 0.9748                                & 0.8092                  \\
\multicolumn{1}{|l|}{}                        & Min ratio & 0.5014                           & 0.7339                       & 0.6253                   & 0.4266                                & 0.9308                                & 0.7809                  \\
\multicolumn{1}{|l|}{}                        & Max ratio & 0.7292                           & 0.8618                       & 0.8156                   & 0.4959                                & 0.9993                                & 0.8435                  \\ \cline{2-8} 
\multicolumn{1}{|l|}{}                        & Ave time  & 0.0004                           & 4.4623                       & 6.7456                   & 148.7543                              & 316.9498                              & 2166.7352               \\ \hline
\end{tabular}%
}
\end{table}
}

\section{Concluding remarks}\label{sec:conclusion}

We study approximation algorithms of the tensor nuclear $p$-norm with an aim to establish the approximation bound matching the best one of its dual norm, the tensor spectral $p$-norm. Driven by the application of sphere covering to approximate tensor spectral and nuclear norms, we propose several types of hitting sets that approximately represent $\ell_p$-sphere with varying cardinalities and covering ratios, providing an independent toolbox for decision making on $\ell_p$-spheres. Using the idea in robust optimization and second-order cone programming, we obtain the first polynomial-time algorithm with an $\Omega(1)$-approximation bound for the computation of the matrix nuclear $p$-norm when $p\in\BQ\cap(2,\infty)$, paving a way for applications in modeling with the matrix nuclear $p$-norm. These two new results enable us to propose various polynomial-time algorithms for the computation of the tensor nuclear $p$-norm with the best approximation bound being the same to the best one of the tensor spectral $p$-norm, no matter for deterministic algorithms or randomized ones. In the meantime, our study opens up a \li{challenging} problem on how to explicitly construct a deterministic hitting set of $\BS^n_p$ with covering ratio $\Omega(\sqrt{\ln{n}/{n}})$ when $p\in(2,\infty)$.

The idea of covering and its applications may lead to some interesting future work. One important case that was not discussed in the paper is $p=\infty$ which corresponds to the so-called $\infty\mapsto1$ norm~\cite{he2013approximation} of a tensor and directly relates to binary constrained polynomial optimization. It is still not known how to construct deterministic hitting set to cover $\{-1,1\}^n$ with polynomial cardinality and best possible covering ratio. A positive answer could result a wide range of applications such as graph theory, neural networks, error-correcting codes; see~\cite[Section~1]{he2013approximation} and the references therein. 

A generalization of the spectral and nuclear $p$-norms is the spectral and nuclear $\bp$-norms where $\bp\in\R^d$ and $\ell_p$-spheres are replaced by $\ell_{p_k}$-spheres for different $p_k$'s; see~\cite{lim2005singular,li2020norm}. Although the constructed hitting sets of $\BS_p^n$ can still be used, the approximation of the matrix $(p_1,p_2)$-spectral norms remains mostly unknown except for some special $(p_1,p_2)$, the same to the matrix $(p_1,p_2)$-nuclear norms. If they can be approximated within a bound of $\Omega(1)$, then a deterministic $\Omega(\prod_{k=1}^{d-2} \sqrt[p_k]{\ln{n_k}}/\sqrt{n_k})$-approximation bound and a randomized $\Omega(\prod_{k=1}^{d-2} \sqrt{\ln{n_k}/n_k})$-approximation bound can be achieved.

Covering a subset of a sphere instead of the whole one may be of particular interest. One immediate case that has wider applications is the nonnegative sphere, $\BS^n_2\cap\R^n_+$, which only occupies $1/2^n$ of the whole sphere. The best possible hitting ratio with polynomial cardinality remains unknown and should be better than $\Omega(\sqrt{\ln{n}/{n}})$. Some probabilistic evidence hints that the best possible hitting ratio might be even $\Omega(1)$, somewhat surprising. A theoretical justification would have a profound influence to optimization problems such as copositive programming and nonnegative matrix factorization.

\bibliographystyle{abbrv}
\bibliography{references}{}

\end{document}